\newtheoremstyle{mystyle}
  {}
  {}
  {}
  {}
  {\bfseries}
  {}
  {\newline}
  {}
\newtheorem{thm}{Theorem}[section]
\newtheorem{lem}[thm]{Lemma}
\newtheorem{cor}[thm]{Corollary}
\newtheorem*{thma}{Theorem}
\newtheorem{deff}[thm]{Definition}
\newcommand{\C}{\mathbb{C}}
\newcommand{\R}{\mathbb{R}}
\newcommand{\Z}{\mathbb{Z}}
\newcommand{\PD}{\P^2_{\C}}
\newcommand{\PDD}{\P^3_{\C}}
\newcommand{\ma}[1]{\left(\begin{array}{ll}
1 & 0 \\
0 & #1
\end{array}
\right)}
\DeclareMathOperator{\id}{Id}
\DeclareMathOperator{\h}{h}
\DeclareMathOperator{\Dim}{dim}
\DeclareMathOperator{\Dic}{Dic}
\DeclareMathOperator{\Tr}{Tr}
\DeclareMathOperator{\Sym}{Sym}
\DeclareMathOperator{\Aut}{Aut}
\DeclareMathOperator{\Pic}{Pic}
\DeclareMathOperator{\GL}{GL}
\DeclareMathOperator{\NE}{NE}
\DeclareMathOperator{\Biol}{Bihol}
\DeclareMathOperator{\Bl}{Bl}
\DeclareMathOperator{\Fix}{Fix}
\DeclareMathOperator{\e}{e}
\DeclareMathOperator{\diag}{diag}
\renewcommand{\P}{\mathbb{P}}
\renewcommand{\c}{c}
\renewcommand{\O}{\mathcal{O}}
\newif\ifVERSIONEPROLISSA 
\newif\ifCUTSECTION 
\title{Groups Acting Freely on Calabi-Yau Threefolds \\  Embedded in a Product of del Pezzo Surfaces}
\author{Gilberto Bini\footnote{Universit\`{a} degli Studi di Milano - Dipartimento di Matematica ``F. Enriques'' - Via C. Saldini, 50 - 20133 Milano (Italy). {\sc E-mail}: {\tt gilberto.bini@unimi.it}}$\,$ and Filippo F. Favale\footnote{Universit\`{a} degli Studi di Pavia (Italy) - Dipartimento di Matematica ``F. Casorati'' - Via Ferrata, 1 - 27100 Pavia. {\sc E-mail}: {\tt filippo.favale@unipv.it}} }
\begin{document}

\maketitle
\begin{abstract}
\noindent In this paper, we investigate quotients of Calabi-Yau manifolds $Y$ embedded in Fano varieties $X$ which are products of two del Pezzo surfaces - with respect to groups $G$ that act freely on $Y$. In particular, we revisit some known examples and we obtain some new Calabi-Yau varieties with small Hodge numbers. The groups $G$ are subgroups of the automorphism groups of $X$, which is described in terms of the automorphism group of the two del Pezzo surfaces.
\end{abstract}
\tableofcontents


\section{Introduction}
In \cite{TianYau} and \cite{Yau} Tian and Yau discover a new Calabi-Yau manifold with Euler characteristic equal to -6. Let us briefly explain their seminal example. To begin with, they consider the product $X$ of two cubic Fermat surfaces in $\PDD$. Next, they pick a smooth hyperplane section $Y$ in $X$, which is invariant with respect to a group $G$ isomorphic to the cyclic group of order three. By adjunction and by Lefschetz's Hyperplane Theorem, $Y$ turns out to be a smooth Calabi-Yau threefold, i.e., a smooth compact K\"{a}hler threefold with trivial canonical bundle and no holomorphic $p$-forms for $p=1,2$.  The Euler characteristic of $Y$ is $-18$ and the two significant Hodge numbers $h^{1,1}(Y)$ and  $h^{1,2}(Y)$ are $14$ and $23$, respectively. To reduce to Euler characteristic and the Hodge numbers, Tian and Yau take the quotient of $Y$ with respect to $G$ that turns out to act freely on it. The quotient manifold $Y/G$ is a Calabi-Yau variety with Hodge numbers $h^{1,1}=6$ and $h^{1,2}=9$.
\vspace{4mm}

\noindent In recent years, physicists have focused on Calabi-Yau manifolds with small Hodge numbers: see, for instance, \cite{Braun}, \cite{AltroCandelas}, \cite{Candelas}, \cite{Davies} and \cite{Freitag}. In fact, imagine to plot the distribution of Calabi-Yau varieties on a diagram with variables the Euler characteristic $\chi(Y)$ (on the horizontal axis) and the height $h(Y):=h^{11}(Y)+h^{12}(Y)$ (on the vertical axis). Fix a pair $(\chi_0, h_0)$ of positive integers such that $\chi_0$ is even and $-2h_0 \leq \chi_0 \leq 2h_0$. For $h_0 \leq 30$, it turns out that there are still a lot of missing examples of Calabi-Yau varieties with Euler characteristic $\chi_0$ and height $h_0$. The example in \cite{TianYau} is even more significant because the Euler characteristic is $-6$. In general, special attention is given to those Calabi-Yau manifolds that have Euler characteristic $6$ in absolute value since they correspond to three-generation families (see, for instance, \cite{Candelas}).
\vspace{4mm}

\noindent Remarkably, the example in \cite{TianYau} can be generalized in the following way. The two cubic Fermat surfaces are examples of degree three del Pezzo surfaces, i.e., smooth surfaces with ample anticanonical divisor which can be obtained as the blow-up of $\PD$ at six points in general position. A first generalization in this direction was given by Braun, Candelas and Davies in \cite{Candelas}. In that paper, they discover a new Calabi-Yau manifold with Euler characteristic $-6$ and small Hodge numbers. They replace the two Fermat surfaces in $\PDD$ by two del Pezzo surfaces of degree six and come up with a group of order twelve that acts freely on a suitable hyperplane section  of the product.
\vspace{4mm}

\noindent In this paper we generalize the examples mentioned above even further and we put them in a more general context. Indeed, let us consider two suitable smooth del Pezzo surfaces $S_1$ and $S_2$. The product $X$ is a smooth Fano fourfold, i.e., $-K_X$ is ample. In $X$ we pick a smooth threefold $Y$ which is in $|-K_X|$. As pointed out by the example in the Introduction in \cite{AltraVoisin} this requires some work: in fact, for some choice of the two del Pezzo surfaces it is not even possible. Moreover, we pick a finite group $G$ in $\Aut(S_1 \times S_2)$ that acts freely on $Y$ so that the quotient variety is a Calabi-Yau manifold. Since the Euler characteristic $\chi(Y)$ is negative, it is easy to verify that the height of $Y/G$ is less than the height of $Y$ for any non-trivial group $G$. Within this set-up, we obtain the two examples mentioned above; further, we find new Calabi-Yau manifolds with small Hodge numbers. The smoothness and the free action of $G$ on a suitable $Y$ are proved as follows. We pick a group $G$ that has only finitely many fixed points on $X$. We decompose the representation of $G$ on $H^0(X, -K_X)$ as a direct sum $\oplus V_{i}$ of irreducible subrepresentation. We consider a subspace $W$ such that for every $g\in G$ and every $s\in W$, $g^{*}(s)=\lambda_{g} s$ for some $\lambda_{g}\in \C^*$, i.e. for every $g\in G$, $W$ is an eigenspace for $g^*$.We pick a section $s\in W$, if there are some, so that the corresponding zero locus does not intersect the fixed locus of $G$. Next, we look at the base points of the subsystem $W\leq H^0(X, -K_X)$. In case there are some, we take a generic section and prove that the base points are smooth. This is done by direct computation with MAPLE. A Bertini-type argument yields the existence of a smooth threefold $Y$ in $X$ on which $G$ acts freely.
\vspace{4mm}

\noindent In Section \ref{newexamples} we present the examples we obtain case by case. Except for the last subsection of that Section, all the examples have height less than $20$. Unfortunately, we do not obtain any new three-generation manifolds, i.e., a manifold with $|\chi(Y)|=6$. Moreover, in Section \ref{list}, you may find all the examples of quotients of Calabi-Yau threefolds $Y$ embedded in $S_1 \times S_2$ by groups which are of maximal order. In other words, we take the quotient by a group $H \leq \Aut(S_1 \times S_2)$ such that the restriction to $Y$ yields a free action and $H$ can not have order greater than the groups used. Finally, we investigate the height of the quotient variety. In several cases, we are able to say that the height for the quotient threefold is the least possible within this framework.
\vspace{4mm}

\nocite{Voisin}

\nocite{Matsuki}

\nocite{DolIsk}

\noindent The following picture represents the tip of the distribution of the Calabi-Yau manifold with respect to the Hodge numbers. The diagonal axis are $h^{1,1}(Y)$ and $h^{1,2}(Y)$ whereas the horizontal and the vertical axis are $\chi(Y)$ and $h(Y)$, respectively. We plot only the known manifolds with height less or equal than $31$. The solid dots correspond to quotients found in this paper. The blue rings represent the ones known until now (with respect to the data collected in \cite{Braun}, \cite{AltroCandelas}, \cite{Candelas} and \cite{Davies}). The black rings are quotients by groups whose order is maximal.  From the picture below, we can summarize our results as follows. The dots $(3,5), (2,7)$ and $(5,13)$ represent NEW Calabi-Yau threefolds. There exists a Calabi-Yau manifold corresponding to the pair $(1,5)$ with non-abelian fundamental group: see \cite{AltroCandelas}.  Our example in Section \ref{p14} has abelian fundamental group isomorphic to the product of the cyclic group of order two and that of order eight. Moreover, we come up with a Calabi-Yau manifold with Hodge numbers $(2,11)$ (cf. \eqref{p2p2}), which are the same as those described in \cite{AltroCandelas}.  Finally, we  construct other varieties with greater height (see Section \ref{other}) but they correspond to existing dots in the picture below. In all the cases where other Calabi-Yau manifolds already exist, it would be interesting to know whether our examples are isomorphic to those or not.

\begin{center}
\includegraphics*[width=.99\textwidth]{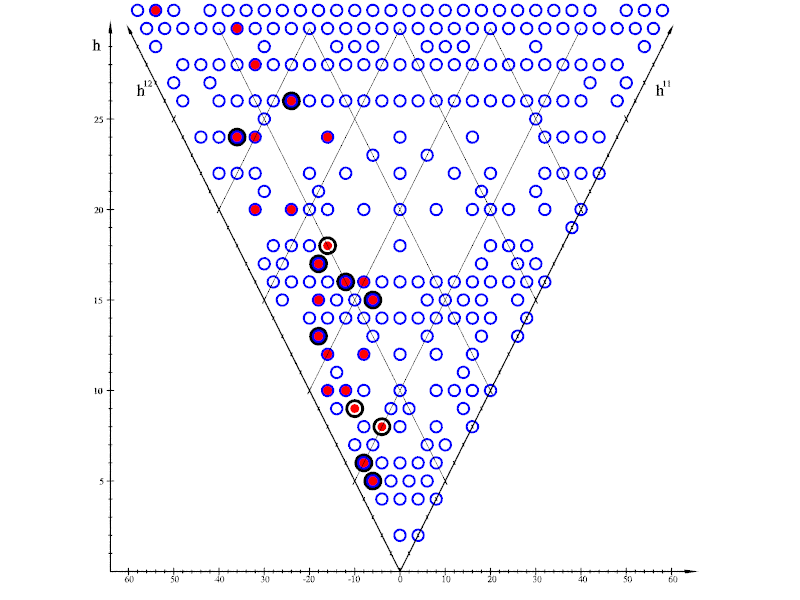}
\end{center}

\noindent In some cases, it is not possible to consider non-trivial quotients with our method. In fact, we prove, for instance, that there does not exist a Calabi-Yau variety which is the quotient by a group of order seven of a smooth anticanonical section $Y$ in a product of two del Pezzo surfaces of degree two.  This type of results is collected in Section \ref{results}. To prove them, we use the following theorem which is proved in Section \ref{relation}. For this purpose, we first use some  Mori theorem of Fano fourfolds which are products of two Fano varieties. Second, we also recall that for low degree del Pezzo surfaces are toric varieties. Thus, we apply a theorem due to Demazure (later generalized by D. Cox in \cite{Cox}) on the structure of the automorphism group of toric varieties. More specifically, the following holds (see Section \ref{relation}).

\begin{thma}
\label{THM:AUTdPs1}
Let $S_1$ and $S_2$ be two del Pezzo surfaces. Then
\begin{itemize}
\item If $S_1\neq S_2$, $\Aut(S_1\times S_2)=\Aut(S_1)\times\Aut(S_2)$;
\item If $S_1=S_2\neq \P^1\times \P^1$, $\Aut(S^{\times 2})=\Aut(S)^{\times 2}\ltimes \Z_2$;
\item If $S_1=S_2= \P^1\times \P^1$, $\Aut((\P^1)^{\times 4})=\Aut(\P^1)^{\times 4}\ltimes S_4,$
where $S_4$ is the symmetric group with $24$ elements.
\end{itemize}
\end{thma}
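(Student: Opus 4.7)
The plan is to study an arbitrary $\phi \in \Aut(X)$ for $X = S_1 \times S_2$ through the induced action of $\phi^*$ on the Mori cone of the Fano fourfold $X$. The overall strategy has two prongs: in the first two cases, I will use a Mori-theoretic statement about products of Fano varieties to force $\phi$ to preserve the product structure up to a possible swap of factors; in the third case, where $S_1 = S_2 = \P^1 \times \P^1$, I will realise $X = (\P^1)^{\times 4}$ as a smooth projective toric fourfold and apply Demazure's theorem (in the form of \cite{Cox}) directly, which produces the richer permutation symmetry $S_4$.

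For the first two bullets, since $-K_X = -p_1^*K_{S_1} - p_2^*K_{S_2}$ is ample, $X$ is Fano and its cone of curves decomposes as
$$\overline{\NE}(X) = p_1^*\overline{\NE}(S_1) \oplus p_2^*\overline{\NE}(S_2).$$
The Mori-theoretic input I invoke is that every extremal contraction of a product of Fano varieties factors through one of the two projections, so no extremal ray of $\overline{\NE}(X)$ is ``diagonal''. Consequently $\phi^*$ permutes the extremal rays in a way compatible with this splitting: either it preserves the partition into $S_1$-rays and $S_2$-rays, or it exchanges the two halves, and the latter requires $S_1 \cong S_2$. In the preserving case, $\phi$ commutes with both projections up to unique $\phi_i \in \Aut(S_i)$, and a fiberwise rigidity argument (each fiber of $p_1$ is sent isomorphically to another, and similarly for $p_2$) gives $\phi = \phi_1 \times \phi_2$. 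This yields $\Aut(S_1 \times S_2) = \Aut(S_1) \times \Aut(S_2)$ when $S_1 \not\cong S_2$, while for $S_1 = S_2 = S$ the additional swap $\tau(x,y) = (y,x)$ produces the semidirect product $\Aut(S)^{\times 2} \rtimes \Z_2$. This is the desired description whenever $S \neq \P^1 \times \P^1$.

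For the third bullet, a naive iteration of the previous argument with $S = \P^1 \times \P^1$ would only produce $\Aut(\P^1 \times \P^1)^{\times 2} \rtimes \Z_2$, missing those permutations of the four $\P^1$-factors that do not respect the pairing imposed by the two copies of $\P^1 \times \P^1$. To recover them, I view $X = (\P^1)^{\times 4}$ as a smooth complete toric fourfold with Cox ring $\C[x_1,y_1,\ldots,x_4,y_4]$ and $\Z^4$-grading $\deg x_i = \deg y_i = e_i$, and apply the Demazure--Cox theorem. The connected component of the identity is the group of graded ring automorphisms, namely $\Aut(\P^1)^{\times 4}$; the component group is identified with the automorphism group of the fan, i.e.\ the permutations of the rays $e_1,\ldots,e_4$, which is precisely $S_4$. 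Combining these yields $\Aut((\P^1)^{\times 4}) = \Aut(\P^1)^{\times 4} \rtimes S_4$.

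The hardest step is the Mori-theoretic one: justifying that $\overline{\NE}(X)$ admits no diagonal extremal rays and that $\phi^*$ cannot mix the two families of extremal rays except via a global swap of factors. This requires comparing the geometry (fibers, images, discrete invariants) of the two classes of extremal contractions on $X$, and is essentially where the Fano hypothesis on $S_1$ and $S_2$ is used. A related subtlety is that the Mori argument does \emph{not} suffice for $S_1 = S_2 = \P^1 \times \P^1$: the toric refinement is genuinely needed to obtain the full $S_4$ instead of the proper subgroup $\Z_2 \wr \Z_2$ that the naive product argument produces, and this is precisely why the statement is split into three cases rather than two.
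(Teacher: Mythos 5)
Your overall architecture coincides with the paper's: analyse $\phi_*$ on the Mori cone of the Fano fourfold $X=S_1\times S_2$, reduce to a splitting lemma (if $\phi_*$ preserves the two sub-cones $\NE(\pi_1)$ and $\NE(\pi_2)$ then $\phi=\phi_1\times\phi_2$; if it swaps them then $S_1=S_2$ and $\phi$ is a product composed with the factor swap), and invoke the Demazure--Cox description of the automorphism group of a toric variety to obtain the full $S_4$ for $(\P^1)^{\times 4}$. (The paper actually runs the Cox argument for all pairs of toric del Pezzo surfaces and treats a $\P^2$ factor by a separate elementary argument, but those are organisational differences.)

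The genuine gap is the step you yourself flag as the hardest and then leave unproved: that $\phi_*$ cannot \emph{partially} mix the two families of extremal rays, sending some rays of type $[E\times P]$ to rays of type $[P\times E']$ and others not. Your proposed mechanism --- comparing ``fibers, images, discrete invariants'' of the two classes of extremal contractions --- cannot work as stated in the critical case $S_1\cong S_2$: there every divisorial extremal contraction of type $[E\times P]$ or $[P\times E']$ has the same $-K_X$-degree and the same local structure (an exceptional divisor isomorphic to $\P^1\times S$ contracted with $\P^1$-fibres onto a copy of $S$), so no invariant of a single contraction distinguishes the two families. The paper's argument is global: assuming a mixing, it contracts a suitable extremal face $V$ (e.g.\ spanned by $[E_{11}\times P]$ and $[E_{12}\times P]$) and its image $V'=\phi_*V$, shows that the induced map $f$ between the two contracted products is bijective (using connectedness and the dimension bound on the fibres of the contractions) and holomorphic off a codimension-two locus (hence everywhere, by Hartogs), so an isomorphism, and then derives a numerical contradiction from $\chi(\tilde S_1)\chi(S_2)=\chi(\hat S_1)\chi(\hat S_2)$. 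The same device is what handles $S_1\neq S_2$: a single $S_1$-ray landing on an $S_2$-ray would force $\chi(S_1)=\chi(S_2)$, contradicting $\rho(S_1)\neq\rho(S_2)$. Without this contraction-and-compare argument, the dichotomy ``preserve or globally swap'' on which your first two bullets rest is unjustified. A smaller omission: in the toric case you should also verify that the Cox extension $1\to\Aut^0(X)\to\Aut(X)\to\Aut(N,\Delta)/\prod S_{\Delta_i}\to 1$ splits, which is what entitles you to write a semidirect product rather than merely an extension.
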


\noindent {\bf Acknowledgments.} During the preparation of this work, we asked some questions and suggestions to various people we kindly acknowledge: Alberto Alzati, Cinzia Casagrande, Philip Candelas, Igor Dolgachev, Bert van Geemen, Grzegorz Kapustka, Michal Kapustka, Antonio Lanteri and Gian Pietro Pirola.  This work was partially supported by MIUR and GNSAGA.

\newpage

\section{Preliminaries}

We say that a complex surface $S$ is a del Pezzo surface if it is projective, smooth, simply-connected and the anticanonical divisor $-K_S$ is ample. Examples of del Pezzo surfaces are blow-ups of the projective plane in a finite set $\Delta$ of $0\leq n<9$ points in general position and $\P^1\times \P^1$. As proved in \cite{Dolgachev}, this list is exhaustive. We often write $dP_d$ to mean a del Pezzo surface that is obtained by blowing up $9-d$ points of $\P^2$ that are in general position. Let $S=\Bl_{\Delta}\P^2$. We can identify $H^0(S,-K_S)$ with the vector space of the homogeneous polynomials of degree $3$ with variables  $\left\lbrace x_0,x_1, x_2\right\rbrace$ such that $f(P)=0$ for all $P\in\Delta$. It is easy to show that $h^0(S,-K_S)=d+1$ if $S=dP_d$. Moreover, if $k=9-d$ then $$-K_S=3\pi^*H-\sum_{i=1}^k E_i,$$ where $H$ is the hyperplane divisor on the projective plane and the $E_i$'s are the exceptional divisors.   Thus,  $K_S^2=9-k=d$. For $ d\geq 3$ we have that $-K_{S}$ is very ample. For $d=2$ the anticanonical system $|-K_S|$ gives a $2:1$ map of $S$ in $\P^2$  branched along a smooth quartic. For $d=1$ the anticanonical model of $S$ is a finite cover of degree two of a quadratic cone $Q$ ramified over a curve $B$ in the linear system $|{\mathcal O}_Q(3)|$.
\vspace{4mm}

\noindent Suppose that $Y$ is a Calabi-Yau threefold and that $G$ is a group that acts freely on $Y$. Then it is well known that the quotient $Y/G$ has a canonical complex structure such that the projection on the quotient is holomorphic. Furthermore, the quotient map is a local isomorphism.


\begin{thm}
\label{THM:Quo}
If the action of $G$ is free then $Y/G$ is also a Calabi-Yau threefold. Moreover, the quotient is projective.
\end{thm}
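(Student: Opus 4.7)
The plan is to verify each defining property of a Calabi-Yau threefold for the quotient $Y/G$, exploiting the fact that freeness of the action makes $\pi\colon Y\to Y/G$ a finite \'etale Galois cover of degree $|G|$. First I would dispatch the easy structural points: standard quotient theory endows $Y/G$ with a unique structure of smooth compact complex manifold of dimension three in which $\pi$ is a local biholomorphism, and averaging an arbitrary K\"ahler form $\omega_Y$ on $Y$ over $G$, $\widetilde{\omega}_Y:=\frac{1}{|G|}\sum_{g\in G}g^{*}\omega_Y$, yields a $G$-invariant K\"ahler form that descends through $\pi$ to give the K\"ahler condition on $Y/G$.

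The Hodge-number vanishings follow at once from the \'etale identification $\pi^{*}\Omega^{p}_{Y/G}\simeq\Omega^{p}_{Y}$ of $G$-equivariant sheaves: passing to global sections and $G$-invariants yields $H^{0}(Y/G,\Omega^{p}_{Y/G})\simeq H^{0}(Y,\Omega^{p}_{Y})^{G}$, so the Calabi-Yau hypothesis on $Y$ immediately forces $h^{1,0}(Y/G)=h^{2,0}(Y/G)=0$. The subtle step is the triviality of $K_{Y/G}$: the identity $\pi^{*}K_{Y/G}\simeq K_{Y}\simeq \O_{Y}$ by itself only forces $K_{Y/G}$ to be torsion of order dividing $|G|$, and genuine triviality is equivalent to $G$ acting trivially on the one-dimensional $H^{0}(Y,K_{Y})$. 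Here I would unwind the $3$-form via adjunction, writing it as the Poincar\'e residue along $Y$ of a meromorphic $4$-form $\eta/s$ on $X=S_{1}\times S_{2}$, where $s\in H^{0}(X,-K_{X})$ is the $G$-semi-invariant section defining $Y$ and $\eta$ trivializes $K_{X}\otimes\O(Y)\simeq\O_{X}$; the character of $G$ on $1/s$ cancels the character on $\eta$, so the residue is genuinely $G$-invariant and descends to a nowhere-vanishing section of $K_{Y/G}$.

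For projectivity, the cleanest route is to observe that the $G$-invariant K\"ahler form constructed above represents a class in $H^{1,1}(Y/G,\R)$, and since $h^{0,2}(Y/G)=0$ one has $H^{1,1}(Y/G,\R)=H^{2}(Y/G,\R)$; rational classes are therefore dense in the K\"ahler cone, so $Y/G$ admits an integral K\"ahler class and Kodaira's embedding theorem yields projectivity. Alternatively, one can descend the canonically $G$-linearized ample bundle $-K_{X}|_{Y}$ directly to an ample bundle on $Y/G$ via \'etale descent. The main obstacle is really the canonical-bundle triviality: the naive Picard-level identification only yields a torsion statement, so one must track the $G$-character through the residue construction to promote it to genuine triviality in the strong sense demanded by the paper's definition.
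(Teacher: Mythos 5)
Your reduction of the problem to showing that $G$ acts trivially on the one-dimensional space $H^{0}(Y,K_{Y})$ is exactly right, and the torsion caveat is well taken; but the residue argument you offer for that triviality does not work, and this is precisely the step where the entire content of the theorem lives. Under the canonical identifications, $\eta$ is a trivializing section of $K_{X}\otimes K_{X}^{-1}=\O_{X}$ and is therefore genuinely $G$-invariant (it is a constant), while $s$ is in general only a semi-invariant, $g^{*}s=\lambda_{g}s$ for a character $\lambda\colon G\to\C^{*}$. Hence the meromorphic $4$-form $\eta/s$ satisfies $g^{*}(\eta/s)=\lambda_{g}^{-1}(\eta/s)$, and since the Poincar\'e residue is $G$-equivariant, the resulting $3$-form $\omega$ on $Y$ obeys $g^{*}\omega=\lambda_{g}^{-1}\omega$: nothing cancels. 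Concretely, evaluating $g^{*}s=\lambda_{g}s$ at a point $P\in\Fix(g)\subset X$ (such points exist, and $s(P)\neq 0$ because $Y$ avoids $\Fix(G)$) gives $\lambda_{g}^{-1}=\det(dg_{P})$, which need not equal $1$: for instance an order-three automorphism with an isolated fixed point at which $dg_{P}$ has all four eigenvalues equal to a primitive cube root of unity $\omega_{0}$ has $\det(dg_{P})=\omega_{0}^{4}\neq 1$. Note that your argument never invokes the hypothesis that the action on $Y$ is free, and that hypothesis is indispensable here: non-free finite actions on Calabi--Yau threefolds acting by a nontrivial character on $H^{3,0}$ do exist, and the residue construction applies to them verbatim, so no purely formal cancellation can be correct.

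The paper closes exactly this gap with the holomorphic Lefschetz fixed point formula: since $g$ has no fixed points on $Y$, its holomorphic Lefschetz number vanishes, giving $0=1-0+0-\Tr\bigl(g^{*}\mid H^{3,0}(Y)\bigr)$; because $h^{3,0}(Y)=1$ the trace is the eigenvalue, which is therefore forced to equal $1$, so $g^{*}\omega=\omega$ and the form descends through the local isomorphism $\pi$. (Equivalently, freeness forces $\det(dg_{P})=1$ at every fixed point of $g$ on $X$, which is what your cancellation silently assumes.) The remaining parts of your proposal --- the \'etale identification $H^{0}(Y/G,\Omega^{p}_{Y/G})\simeq H^{0}(Y,\Omega^{p}_{Y})^{G}$ for $p=1,2$, and projectivity via an invariant K\"ahler class or descent of the ample bundle $-K_{X}|_{Y}$ --- are fine, and the projectivity argument is more self-contained than the paper's citation; but the central step must be repaired along the lines above.
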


\begin{proof}
Take $g\in G\setminus \left\lbrace\id\right\rbrace$.  The manifold $Y$ is a Calabi-Yau threefold,  so $$h^{1,0}(Y)=h^{2,0}(Y)=0,\quad h^{3,0}(Y)=1.$$
There exists $\omega \in H^{3,0}(Y)$ such that $\omega_P\not\equiv 0$ for all $P\in Y$ (this is equivalent to $K_{Y}\equiv 0$).
We want to show that $g^*\omega=\omega$.
The maps
$$g^*:H^{p,0}(Y)\longrightarrow H^{p,0}(Y)$$
are zero for $p=1,2$ whereas for $p=0$, $g^*$ is the identity. We apply the Holomorphic Lefschetz Fixed Point formula, which in this case reads as follows:
$$0=1-0+0-\Tr(g^*:H^{3,0}(Y)\longrightarrow H^{3,0}(Y)).$$
Since $h^{3,0}(Y)=1$ ($Y$ is a Calabi-Yau manifold) we get $g^*=\id$ for $p=3$ and for all $g\in G$. Thus the action of $G$ on $H^{3,0}(Y)$ is trivial.
We have the following isomorphism (\cite{Bogomolov}, p. 198):
$$H^{p,q}(Y/G)\simeq H^{p,q}(Y)^G;$$
hence $H^{3,0}(Y/G)\simeq H^{3,0}(Y)^G=H^{3,0}(Y)$ and there exists a holomorphic $3-$form $\tilde{\omega}$ on $Y/G$ such that $\pi^*\tilde{\omega}=\omega$ and, as $\pi$ is a local isomorphism,  $\tilde{\omega}_P\neq0$ for all $P\in Y/G$. This is equivalent to $K_{Y/G}\equiv 0$.
Finally, using $h^{p,0}(Y/G)=h^{p,0}(Y)^G$ one has $h^{1,0}(Y/G)=h^{2,0}(Y/G)=0$ and this concludes the proof.
As for the projectivity of $Y/G$, see, for example, \cite{harris}, p. 127.
\end{proof}

\vspace{4mm}

\noindent We will adopt the following framework. We will take two del Pezzo surfaces $S_1$ and $S_2$, their product $X=S_1\times S_2$, which is a Fano fourfold, and a smooth element $Y$ of $|-K_X|$.

\noindent First of all, we will define a number $M(S_1,S_2)$ which bounds the maximum order of a finite group acting freely on $Y$ and which depends only on the degree of $S_1$ and $S_2$.
\begin{deff}
\label{emme}
Let $M(S_1,S_2)$ to be the positive greatest common divisor of $\chi(Y)/2$ and $\chi(-\iota^*K_X))$, where $\iota : Y \rightarrow X$ is the embedding of $Y$ in $X$.
\end{deff}
\noindent Notice that if $Y \subset S_1\times S_2$ is a Calabi-Yau threefold and $G$ is a finite group that acts freely on $Y$, then $|G|$ divides $M(S_1,S_2)$.
\vspace{4mm}

\noindent With the definition of $M(S_1, S_2)$ in mind, we will search for a group $G$ with the following properties:
\begin{enumerate}[(a)]
\item $G$ is a subgroup of $\Aut(S_1\times S_2)$;
\item $|G|=M(S_1,S_2)$.
\end{enumerate}
Note that if $\Fix(G) \subset X$ contains a curve $L$, by the Nakai-Moishezon criterion of ampleness, $-K_X\cdot L>0$, and since $Y=-K_X$ we will have some fixed points on $Y$. Hence it's necessary to choose groups whose action on $X$ has at most a finite number of fixed points.
\vspace{4mm}

\noindent Finally, Let $m(S_1,S_2,Y)$ be $$\max\left\lbrace |G|\quad|\quad g(Y)=Y\,\,\forall g\in G \mbox{ and satisfies } (a) \mbox{ and } (b)\right\rbrace.$$
We anticipate that there are cases in which $M(S_1,S_2)>1$ but the only group with these requests is the trivial group (that is $m(S_1,S_2,Y)=1$ for all $Y$).


\section{Necessary Conditions}
Assume that $S_1$ and $S_2$ are smooth projective surfaces and $Y$ is a Calabi-Yau threefold embedded in $X=S_1\times S_2$. Then the following result holds:

\begin{thm}
\label{THM:Chi}
The Euler characteristic of $Y$ is $$-2K_{S_1}^2K_{S_2}^2.$$
\end{thm}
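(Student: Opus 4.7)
The plan is to compute $\chi(Y)=\int_Y c_3(T_Y)$ via the normal bundle sequence and then turn the computation into an intersection number on $X=S_1\times S_2$ using the Künneth decomposition of its tangent bundle.

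First I would write down the conormal sequence $0\to T_Y\to T_X|_Y\to N_{Y/X}\to 0$. Since $Y\in|-K_X|$, one has $N_{Y/X}=\O_Y(-K_X)$, so if I set $y:=c_1(-K_X)=c_1(T_X)\in H^2(X,\Z)$, the total Chern class of $Y$ satisfies
\begin{equation*}
c(T_Y)=\frac{c(T_X)|_Y}{1+y|_Y}.
\end{equation*}
Expanding and extracting the degree-three piece, together with the identity $y=c_1(T_X)$, gives $c_3(T_Y)=(c_3(T_X)-c_1(T_X)c_2(T_X))|_Y$. Pushing forward via $\iota$ and using $\iota_*[Y]=y=c_1(T_X)$ I obtain
\begin{equation*}
\chi(Y)=\int_X c_1(T_X)\,c_3(T_X)-c_1(T_X)^2\,c_2(T_X).
\end{equation*}

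Next I would exploit the product structure. Writing $a_i:=\pi_1^*c_i(T_{S_1})$ and $b_i:=\pi_2^*c_i(T_{S_2})$, the splitting $T_X=\pi_1^*T_{S_1}\oplus\pi_2^*T_{S_2}$ yields
\begin{equation*}
c_1(T_X)=a_1+b_1,\quad c_2(T_X)=a_2+b_2+a_1b_1,\quad c_3(T_X)=a_1b_2+a_2b_1.
\end{equation*}
Expanding $c_1c_3$ and $c_1^2c_2$ and discarding all classes that vanish for dimensional reasons on a surface ($a_1a_2=a_1^3=b_1b_2=b_1^3=0$), every surviving monomial has the form $a^ib^j$ with $a^i$ of top degree on $S_1$ and $b^j$ of top degree on $S_2$, so by Künneth it integrates on $X$ as $\int_{S_1}a^i\cdot\int_{S_2}b^j$. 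Using $a_1^2=K_{S_1}^2$, $b_1^2=K_{S_2}^2$, $\int_{S_1}a_2=\chi(S_1)$ and $\int_{S_2}b_2=\chi(S_2)$, one finds
\begin{equation*}
\int_X c_1c_3=K_{S_1}^2\chi(S_2)+\chi(S_1)K_{S_2}^2,\qquad \int_X c_1^2c_2=K_{S_1}^2\chi(S_2)+\chi(S_1)K_{S_2}^2+2K_{S_1}^2K_{S_2}^2.
\end{equation*}
Subtracting gives precisely $\chi(Y)=-2K_{S_1}^2K_{S_2}^2$.

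The only mildly delicate point is the bookkeeping of signs and dimensions in Step two: one must consistently use $c_1(T_{S_i})=-K_{S_i}$ (so that $c_1(T_{S_i})^2=K_{S_i}^2$) and correctly identify which of the nine monomials in the expansion of $c_1^2c_2$ survive both the surface-dimension constraint and the Künneth top-degree constraint on $X$. Everything else is a direct computation from the adjunction/normal bundle sequence, for which $S_1$ and $S_2$ need only be smooth projective surfaces; the del Pezzo hypothesis is not used here.
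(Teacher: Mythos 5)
Your proposal is correct and follows essentially the same route as the paper's proof: the normal bundle sequence for $Y\subset X$ combined with $c_1(T_Y)=0$ to get $c_3(T_Y)=\iota^*(c_3(T_X)-c_1(T_X)c_2(T_X))$, pushforward via $\iota_*[Y]=c_1(T_X)$, and the K\"unneth splitting of $c(T_X)$ for the product of two surfaces. The bookkeeping of surviving monomials matches the paper's computation and yields the same answer $-2K_{S_1}^2K_{S_2}^2$.
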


\begin{proof}
By the exact sequence of vector bundles
$$0\rightarrow T_Y \rightarrow T_X \rightarrow N_{Y/X}\rightarrow 0$$
and, as $Y$ is a Calabi-Yau manifold ( which implies $c_1(Y)=0$), we have:
$$(1+\c_2(Y)+\c_3(Y))\cdot (1+\c_1(N_{Y/X})) = \iota^*(1+\c_1(X)+\c_2(X)+\c_3(X)+\c_4(X))$$
and, in particular,
$$c_1(N_{Y/X})=\iota^*c_1(X),\quad c_2(Y)=\iota^*c_2(X)\quad \mbox{ and}$$ $$c_3(Y)=\iota^*c_3(X)-c_2(Y)c_1(N_{Y/X}).$$
Using the fact that $X$ is a product of surfaces we have
$$c_1(X)=c_1(S_1)+c_1(S_2),\quad c_2(X)=c_2(S_1)+c_2(S_2)+c_1(S_1)c_1(S_2)$$
and
$$c_3(X)=c_2(S_1)c_1(S_2)+c_2(S_2)c_1(S_1).$$
Hence, by the identification $H^6(Y,\Z)\simeq \Z$, we have
$$c_3(Y)=\iota^*(c_3(X)-c_2(X)c_1(X))=c_3(X)c_1(X)-c_2(X)c_1(X)^2=$$
$$c_2(S_1)c_1(S_2)^2+c_2(S_2)c_1(S_1)^2-c_2(S_1)c_1^2(S_2)-c_2(S_2)c_1(S_1)^2-2c_1(S_1)^2c_1(S_2)^2=$$
$$=-2c_1(S_1)^2c_1(S_2)^2=-2K_{S_1}^2K_{S_2}^2.$$
\end{proof}

\noindent Now, assume $Y$ is asmooth ample divisor in $X$. Thus, the following isomorphisms hold:
$$
H^2(Y,\Z) \simeq H^2(X, \Z) \simeq H^2(S_1, \Z) \oplus H^2(S_2,\Z).
$$

\noindent For any divisor class $D \in H^2(Y/G,\Z)$ denote by $D_1$ and $D_2$ divisors classes such that $\pi^*(D)=D_1+D_2$, where $\pi$ is the projection of $Y$ onto the quotient. Finally,  we denote by $K_i$ the divisor classes  such that $K_X=K_1+K_2$. Then the following holds.

\begin{thm}
\label{THM:RR}
Let $G$ be a group that acts freely on $Y$. Then for  any $D \in H^2(Y/G,\Z)$ and $D_1, D_2$ as above,  we have
$$\chi(D)=-\frac{D_1D_2(D_1K_2+D_2K_1)}{2|G|}-
\frac{\chi(\O_{S_1})K_2D_2+\chi(\O_{S_2})K_1D_1}{|G|}.$$
\end{thm}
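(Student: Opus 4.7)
The plan is to apply Hirzebruch-Riemann-Roch on the Calabi-Yau threefold $Y/G$. Since $c_1(Y/G)=0$, the Todd class reduces to $1+\tfrac{1}{12}c_2(Y/G)$, so the degree-six part of $\mathrm{ch}(D)\cdot\mathrm{td}(Y/G)$ collapses to
\begin{equation*}
\chi(D)=\int_{Y/G}\frac{D^3}{6}+\frac{D\cdot c_2(Y/G)}{12}.
\end{equation*}
Since $\pi\colon Y\to Y/G$ is étale of degree $|G|$, one has $\pi^*c_2(Y/G)=c_2(Y)$ and $\int_{Y/G}\alpha=\tfrac{1}{|G|}\int_Y\pi^*\alpha$, which moves the computation onto $Y$. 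Finally, as $Y\in|-K_X|$ is ample, $\int_Y\iota^*\beta=\int_X\beta\cdot(-K_X)$ for any $\beta\in H^6(X)$, so the entire problem reduces to intersection theory on $X=S_1\times S_2$, where the Künneth decomposition $H^*(X)=H^*(S_1)\otimes H^*(S_2)$ makes the bookkeeping clean.

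For the cubic term I would write $\pi^*D=D_1+D_2$ and expand $(D_1+D_2)^3\cdot(-K_1-K_2)$ in $H^8(X)$. Since $\dim S_i=2$, any Künneth monomial of $S_i$-degree greater than four vanishes; in particular $D_i^3=0$, and the products $D_1^2D_2K_1$ and $D_1D_2^2K_2$ die for the same reason. Only $-3D_1^2D_2K_2-3D_1D_2^2K_1=-3\,D_1D_2(D_1K_2+D_2K_1)$ survives. For the Chern-class term I would use $c_2(Y)=\iota^*c_2(X)$ from the proof of Theorem \ref{THM:Chi} together with $c_2(X)=c_2(S_1)+c_2(S_2)+K_1K_2$ (using $c_1(S_i)=-K_i$). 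The same bookkeeping leaves exactly four surviving monomials, which I would group as
\begin{equation*}
\int_Y\pi^*D\cdot c_2(Y)=-(D_1K_1)\bigl[c_2(S_2)+K_2^2\bigr]-(D_2K_2)\bigl[c_2(S_1)+K_1^2\bigr],
\end{equation*}
so as to make Noether's formula $K_{S_i}^2+c_2(S_i)=12\chi(\O_{S_i})$ directly applicable.

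Combining the two contributions with the coefficients $\tfrac{1}{6|G|}$ and $\tfrac{1}{12|G|}$ would then reproduce exactly the stated identity. The main obstacle is purely combinatorial rather than conceptual: tracking the signs coming from $-K_X$ and ensuring that no monomial of the wrong bidegree is accidentally retained in the Künneth expansion. The appearance of $\chi(\O_{S_i})$ is automatic once Noether is applied to the bracketed quantities, and this is really the only place where an identity beyond multilinear expansion is invoked.
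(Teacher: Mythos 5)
Your proposal is correct and follows essentially the same route as the paper: Hirzebruch--Riemann--Roch on the Calabi--Yau quotient (where the Todd class reduces to $1+\tfrac{1}{12}c_2$), transfer to $Y$ via the degree-$|G|$ \'etale cover, pushforward to $X$ using $[Y]=-K_X$, Künneth bidegree bookkeeping, and Noether's formula to produce the $\chi(\O_{S_i})$ terms. The only cosmetic difference is that you make the intermediate steps (e.g. $c_2(X)=c_2(S_1)+c_2(S_2)+K_1K_2$ and the identification $\chi(S_i)=c_2(S_i)$) explicit where the paper leaves them implicit.
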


\begin{proof}
We recall that the Riemann-Roch formula for the Calabi-Yau threefold $Y/G$ is
$$\chi(D)=\frac{D^3}{6}+\frac{c_2(Y/G)D}{12}.$$
The action of $G$ is free, hence
$$|G|D^3=\pi^*(D)^3 \quad \mbox{ and } \quad |G|c_2(Y/G)D=c_2(Y)\pi^*(D).$$ This yields
$$\pi^*(D)^3=(D_1+D_2)^3(c_1(S_1)+c_1(S_2))=$$
$$=3D_1^2D_2c_1(S_2)+3D_1D_2^2c_1(S_1)=-3D_1D_2(D_1K_2+D_2K_1).$$
In a similar way, we obtain
$$c_2(Y)\pi^*(D)=-(\chi(S_1)+K_1^2)K_2D_2-(\chi(S_2)+K_2^2)K_1D_1.$$
Merging these results and using N\"{o}ther formula\footnote{$\chi({\mathcal O}_S)=\frac{K_S^2+\chi(S)}{12}.$}, we complete the proof.
\end{proof}

\noindent We focus our attention on a particular divisor on the quotient: a divisor $D$ such that $\pi^*D=-\iota^*K_X=$. Such a divisor always exists because the canonical divisor is $G$-invariiant for any gruoup of automorphisms $G$.
We can specialize the previous formula for $nD$ obtaining
$$\chi(nD)=n^3\frac{K_1^2K_2^2}{|G|}+n\frac{\chi(\O_{S_1})K_2^2+ \chi(\O_{S_2})K_1^2}{|G|}=\frac{\chi(-n\iota^*K_X)}{|G|}.$$
Hence $|G|$ has to divide $\chi(-n\iota^*K_X)$ for all\footnote{One could easily check that
$$|G| \mbox{ divides } \chi(-n\iota^*K_X)\quad \forall n\in \Z \Longleftrightarrow |G| \mbox{ divides }  \chi(-\iota^*K_X).$$} $n$. We can obtain a similar condition using Theorem \ref{THM:Chi}: the Euler characteristic of the quotient $Y/G$ of $Y$ by a finite group $G$ that acts freely is the Euler characteristic of $Y$ divided by the order of the group. Moreover, it is known that a Calabi-Yau threefold has even Euler number so we obtain that $|G|$ must divide $\chi(Y)/2$. This gives a motivation to Definition \ref{emme}.

\vspace{4mm}

\noindent The following table gives the values of $M(S_1,S_2)$ for every distinct values of degrees of $S_1$ and $S_2$, with $S_1$ and $S_2$ del Pezzo surfaces - distinguishing the case $dP_8$ and $\P^1\times \P^1$.
\vspace{4mm}

\begin{displaymath}
\begin{array}[c]{c||cccccccccc}
M(S_1,S_2) & \P^2 & \P^1\times\P^1 & dP_8 & dP_7 & dP_6 & dP_5 & dP_4 & dP_3 & dP_2 & dP_1 \\ \hline\hline
\P^2 &9&1&1&1&3&1&1&3&1&1\\
\P^1\times \P^1 &1&16&16&1&2&1&4&1&2&1\\
dP_8 &1&16&16&1&2&1&4&1&2&1\\
dP_7 &1&1&1&7&1&1&1&1&1&1\\
dP_6 &3&2&2&1&12&1&2&9&4&1\\
dP_5 &1&1&1&1&1&5&1&1&1&1\\
dP_4 &1&4&4&1&2&1&8&1&2&1\\
dP_3 &3&1&1&1&9&1&1&3&1&1\\
dP_2 &1&2&2&1&4&1&2&1&4&1\\
dP_1 &1&1&1&1&1&1&1&1&1&1
\end{array}
\end{displaymath}

\noindent For example,  if $X=dP_2\times dP_5$ ($M(dP_2,dP_5)=1$) it isn't possible to find a pair $(Y,G)$ with $Y$ embedded in $X$ and $\id\neq G\leq \Aut(Y)$ that acts freely on $Y$. If we choose $X=dP_5\times dP_5$ ($M(dP_5,dP_5)=5$) a pair $(Y,\Z_5)$ with $\Z_5$ without fixed points \textit{might} exist.
\vspace{4mm}

\noindent The self-intersection of $-K_S$, where $S$ is a del Pezzo surface,  is positive and is equal to its degree and this, using Theorem \ref{THM:Chi}, means that $\chi(Y)<0$ regardless of the choice of which surfaces we are using. Therefore, by recalling that the action of $G$ is free, we have that the {\em height} $h:= h^{1,1} + h^{1,2}$ of $Y$ and that of $Y/G$ satisfy the following inequality:
\begin{eqnarray*}
h(Y/G)=h^{1,1}(Y/G)+h^{1,2}(Y/G)&=& 2h^{1,1}(Y/G)-\frac{\chi(Y)}{2|G|}\\
&=& 2h^{1,1}(Y)^G+\frac{|\chi(Y)|}{2|G|} \\
&<&2h^{1,1}(Y) + \frac{|\chi(Y)|}{2}= h(Y).
\end{eqnarray*}

\vspace{4mm}

\noindent By finding a group whose order is maximal   - and such that the dimension $h^{1,1}(Y)^G$ of the invariant part of $H^{1,1}(Y)$ is the smallest possible - we obtain the least possible height for the quotient.
\vspace{4mm}

\noindent In the following sections we give some examples (both known and new) and some results of non-existence.


\section{Known Examples}
With the following examples we revisit some known examples in the framework presented. The first one is due to Braun, Candelas and Davies and can be found in \cite{Candelas}. The second one is due to Tian and Yau and is presented in \cite{Yau} and \cite{TianYau}.


\subsection{$dP_6\times dP_6$ with maximal order $12$}

There is a unique del Pezzo surface of  degree $6$ and this surface can be obtained as the complete intersection of two global sections of $\O_{\P^2\times \P^2}(1,1)$. Explicitly we can take $S$ to be the surface in $\P^2\times \P^2$ given by the equations $$f=x_{10}x_{20}-x_{11}x_{21}\quad\mbox{ and }\quad g=x_{10}x_{20}-x_{12}x_{22},$$
where $x_{ij}$ is the $j$-th coordinate on the $i$-th copy of $\P^2$. In this way, $S$ is the surface obtained by blowing up the points $P_0=(1:0:0),P_1=(0:1:0)$ and $P_2=(0:0:1)$ of $\P^2$ and the exceptional divisors $E_i$ are given by
$$E_0:=V(x_{11},x_{12},x_{20}),E_1:=V(x_{10},x_{12},x_{21}) \mbox{ and } E_2:=V(x_{10},x_{11},x_{22}).$$
We define $S_1=S_2=S$ and embed $X=S\times S$ in $(\P^2)^4$ using $x_{i0},x_{i1}$ and $x_{i2}$ as projective coordinates of the $i-$th $\P^2$ for $i=1,2,3,4$. Let $P$ be the point
$$
P:=((x_{10}, x_{11}, x_{12}), (x_{20}, x_{21}, x_{22}), (x_{30}, x_{31}, x_{32}), (x_{40}, x_{41}, x_{42})).
$$
Consider the automorphism of $X$ defined by
$$g_3(P)=((x_{12}:x_{10}:x_{11}),(x_{22}:x_{20}:x_{21}),(x_{31}:x_{32}:x_{30}),(x_{41}:x_{42}:x_{40}))$$
$$g_4(x_1,x_2,x_3,x_4)=(x_4,x_3,x_1,x_2).$$
It is easy to check that $g_3^3=g_4^4=\id$ and $g_4g_3=g_3g_4^2$ hence $$G=<g_3,g_4>\simeq \Z_4\ltimes\Z_3:=:\Dic_3,$$ which is called the dicyclic group of order $12$.
The set of the fixed points $\Fix$ of $G$ is given by the union of
$$\Fix(g_3)=\left\lbrace (Q_1 , Q_2) \quad |\quad Q_1, Q_2\in\left\lbrace (1:a:a^2)\times(1:a^2:a) \quad |\quad a^3=1\right\rbrace \right\rbrace $$
and
$$\Fix(g_4^2)=\left\lbrace (T\times T\times Q\times Q\quad |\quad T,Q\in\left\lbrace (1:\pm1:\pm1)\right\rbrace \right\rbrace$$
so we have a total of $25$ fixed points.
\vspace{4mm}

\noindent We are looking for a global section $s$ of $\O_{X}(-K_X)$ that is $G-$invariant and whose zero-locus $V(s)$ is smooth and doesn't intersect $\Fix$. We have an exact sequence
$$0\rightarrow <f,g> \hookrightarrow H^0(\P^2\times \P^2,\O(1,1))\twoheadrightarrow H^0(S,-K_S)\rightarrow 0$$
with the surjection given by the inclusion $\iota:S\rightarrow \P^2\times\P^2$.
Hence, we have a surjection $$H^0((\P^2)^4,\O(1,1,1,1))\twoheadrightarrow H^0(X,-K_X)$$ with kernel given by $$<f_1,g_1>\cdot H^0((\P^2)^4,\O(0,0,1,1))+<f_2,g_2>\cdot H^0((\P^2)^4,\O(1,1,0,0)).$$
\ifVERSIONEPROLISSA 
The representation of $\Dic_3$ in $H^0(X,-K_X)\simeq\C^{49}$ has an invariant space $H^0(X,-K_X)^G$ of dimension $5$ and a basis is given by $\left\lbrace G_1,G_2,G_3,G_4,G_5 \right\rbrace $ with
$$G_1:=x_{10}x_{20} x_{30} x_{41}+x_{11} x_{20} x_{30} x_{40}+x_{10} x_{20} x_{31} x_{40}+x_{10} x_{21} x_{30} x_{40}+$$
$$+x_{10} x_{20} x_{31} x_{42}+x_{10} x_{22} x_{30} x_{40}+x_{10} x_{20} x_{32} x_{41}+
x_{12} x_{20} x_{30} x_{40}+$$
$$+x_{10} x_{20} x_{32} x_{40}+x_{12} x_{21} x_{30} x_{40}+x_{10} x_{20} x_{30} x_{42}+x_{11} x_{22} x_{30} x_{40},$$
$$G_2:=x_{10} x_{21} x_{30} x_{41}+x_{11} x_{20} x_{30} x_{41}+x_{11} x_{20} x_{31} x_{40}+x_{10} x_{21} x_{31} x_{40}+$$
$$+x_{12} x_{20} x_{31} x_{42}+x_{10} x_{22} x_{31} x_{42}+x_{10} x_{22} x_{32} x_{41}+
x_{12} x_{20} x_{32} x_{41}+$$
$$+x_{11} x_{22} x_{32} x_{40}+x_{12} x_{21} x_{32} x_{40}+x_{12} x_{21} x_{30} x_{42}+x_{11} x_{22} x_{30} x_{42},$$
$$G_3:=x_{10} x_{21} x_{30} x_{42}+x_{12} x_{20} x_{30} x_{41}+x_{11} x_{20} x_{32} x_{40}+x_{10} x_{22} x_{31} x_{40}+$$
$$+x_{12} x_{20} x_{31} x_{40}+x_{11} x_{22} x_{31} x_{42}+x_{10} x_{22} x_{30} x_{41}+
x_{12} x_{21} x_{32} x_{41}+$$
$$+x_{11} x_{22} x_{32} x_{41}+x_{10} x_{21} x_{32} x_{40}+x_{12} x_{21} x_{31} x_{42}+x_{11} x_{20} x_{30} x_{42},$$
$$G_4:=x_{10} x_{22} x_{30} x_{42}+x_{12} x_{20} x_{30} x_{42}+x_{12} x_{20} x_{32} x_{40}+x_{10} x_{22} x_{32} x_{40}+$$
$$+x_{12} x_{21} x_{31} x_{40}+x_{11} x_{22} x_{31} x_{40}+x_{11} x_{22} x_{30} x_{41}+
x_{12} x_{21} x_{30} x_{41}+$$
$$+x_{11} x_{20} x_{32} x_{41}+x_{10} x_{21} x_{32} x_{41}+x_{10} x_{21} x_{31} x_{42}+x_{11} x_{20} x_{31} x_{42}$$
and
$$G_5:=x_{10} x_{20} x_{30} x_{40}.$$
By direct inspection, we have checked that the generic invariant section $s$ doesn't intersect $\Fix$, so the action of $G$ restricted to $V(s)$ is free. For example, by taking $s$ to be $G_2+G_5$, we have a section with $Y:=V(s)$ that is smooth and the action of $G$ is free on $Y$. Then $Y$ is a Calabi-Yau threefold with a free action of $\Dic_3$.\vspace{4mm}
\else 
The representation of $\Dic_3$ in $H^0(X,-K_X)\simeq\C^{49}$ has an invariant space $H^0(X,-K_X)^G$ of dimension $5$.
By direct inspection, we have checked that the generic invariant section $s$ doesn't intersect $\Fix$ and is smooth. Then $Y=V(s)$ is a Calabi-Yau threefold with a free action of $\Dic_3$.\vspace{4mm}
\fi 

\noindent If we call $R$ the representation of $\Dic_3$ in $H^2(Y,\C)\simeq H^2(X,\C)$ given by $g_i\mapsto g_i^*\in \GL(H^2(X,\C))\simeq\GL(H^2(S,\C)\oplus H^2(S,\C))\simeq \GL(\C^8)$ we have
$$R(g_3)\leftrightsquigarrow A_3:=\left[ \begin {array}{cccc|cccc}
0&1&0&0&0&0&0&0 \\
0&0&1&0&0&0&0&0 \\
1&0&0&0&0&0&0&0 \\
0&0&0&1&0&0&0&0 \\\hline
0&0&0&0&0&0&1&0 \\
0&0&0&0&1&0&0&0 \\
0&0&0&0&0&1&0&0 \\
0&0&0&0&0&0&0&1
\end {array} \right]$$
and
$$R(g_4) \leftrightsquigarrow A_4:=\left[ \begin {array}{cccc|cccc}
0&0&0&0&1&0&0&0 \\
0&0&0&0&0&1&0&0 \\
0&0&0&0&0&0&1&0 \\
0&0&0&0&0&0&0&1 \\\hline
 0&-1&-1&-1&0&0&0&0 \\
-1& 0&-1&-1&0&0&0&0 \\
-1&-1& 0&-1&0&0&0&0 \\
 1& 1& 1& 2&0&0&0&0
\end {array} \right],$$
where we used the base $$\left\lbrace H_1,E_{10},E_{11},E_{12},H_2,E_{20},E_{21},E_{22} \right\rbrace $$ where
$\pi^*_i(E_{j})=E_{ij}$ and $\pi^*_i(\pi^*H)=H_{i}$.
Hence $\Dim H^2(Y,\C)^G=1$,  so we have $h^{1,1}(Y/G)=1$. By Theorem \ref{THM:Chi} we know that $\chi(Y)=-72$ and then $\chi(Y/G)=-6$ because the action is free. In conclusion, you find below  the Hodge diamond of $Y/G$
$$\xymatrix@R=8pt@C=6pt{
   &   &   & 1   \\
   &   & 0 &   & 0 \\
   & 0 &   & 1 &   & 0 \\
1  &   & 4 &   & 4 &   & 1, \\
   & 0 &   & 1 &   & 0 \\
   &   & 0 &   & 0 \\
   &   &   & 1
}$$
where $h(Y/G)=5$. Note that, because $h^{1,1}(Y/G)=1$, this example achieves the minimum of the height for the quotient $Y/G$, where $G$ is isomorphic to the $Dic_3$ and $Y$ is as above.  It is interesting to note that taking
$$g_3'(P)=((x_{12}:x_{10}:x_{11}),(x_{22}:x_{20}:x_{21}),(x_{32}:x_{30}:x_{31}),(x_{42}:x_{40}:x_{41}))$$
the group $G'$ spanned by $g_4$ and $g_3'$ is cyclic of order $12$ and a generator is $g_3'g_4:=g_{12}$. Following the same argument as the previous case it can be shown that exist a Calabi-Yau $Y$ such that $G'$ acts on $Y$ freely. The quotient $Y/G'$ is hence again a Calabi-Yau and has the same Hodge diamond as $Y/G$. However these two manifolds aren't even diffeomorphic because $\Pi_1(Y/G)\simeq G\simeq \Dic_3\not\simeq \Z_{12}\simeq G'\simeq \Pi_1(Y/G')$.


\subsection{$dP_3\times dP_3$ with maximal order $3$}

Suppose $S_1$ and $S_2$ del Pezzo surfaces of degree $3$. Then $-K_{S_i}$ is very ample and gives an embedding in $\P^3$. The surface obtained is a cubic (is called anticanonical model of $S_i$) and all smooth cubic surfaces in $\P^3$ can be obtained in this way.
\vspace{4mm}

\noindent Set $f_1:=x_0^3+x_1^3+x_2^3+x_3^3$ and $f_2:=y_0^3+y_1^3+y_2^3+y_3^3$ and consider the Fermat surfaces $S_i:=V(f_i)$. Denote, as usual, $X=S_1\times S_2\subset \P^3\times \P^3$ and consider
the automorphism given by
$$\varphi(x,y)=((x_1 : x_2 : x_0 : \omega x_3),(y_1 : y_2 : y_0 : \omega^2 y_3))$$
where $\omega\neq 1$ is a fixed root of $z^3-1$. The group $G=<\varphi>$ is cyclic of order $3$; hence we have
$$\Fix(\varphi)=\Fix(G)=\left\lbrace ((1:\omega^2 : \omega : c),(1:\omega : \omega^2 : d))\,\,|\, d^3=c^3=-3\right\rbrace.$$
\vspace{4mm}

\noindent There is an isomorphism
$$H^0(\P^3\times\P^3,\O(1,1))\simeq H^0(X,-K_X)$$
so we have to study the polynomial of bidegree $(1,1)$. The action of $\Z_3$ on $X$ gives a representation of $\Z_3$ in $H^0(X,-K_X)$ and a basis for the invariant space is
$\left\lbrace G_0,G_1,G_2,G_3,G_4,G_5 \right\rbrace $ where
$$G_0 = \omega x_3y_0+\omega^2 x_3y_1+x_3y_2,\quad  G_1 = \omega^2 x_0y_3+\omega x_1y_3+x_2y_3,$$
$$G_2 = x_0y_1+x_1y_2+x_2y_0,\quad G_3 = x_0y_2+x_1y_0+x_2y_1,$$
$$G_4 = x_0y_0+x_1y_1+x_2y_2 \quad\mbox{ and }\quad G_5 = x_3y_3.$$
By direct computation, one can check that the generic section $s$ doesn't intersect $\Fix(\Z_3)$ hence the action of $G$ restricted to $V(s)$ is free. For example, taking $s$ to be $G_4+G_5=x_0y_0+x_1y_1+x_2y_2+x_3y_3$ gives a section whose zero locus $Y$ is smooth and $Y\cap \Fix(\Z_3)$ is empty.
\vspace{4mm}

\noindent Assume $\varphi\in \Aut(S_1)\times \Aut(S_2)$ with $o(\varphi)=3$. By the Lefschetz fixed-point theorem, one can show that
$$h^{1,1}(Y)^G = 2+\frac{2}{3}\left(\chi(\Fix(\pi_1\circ \varphi))+\chi(\Fix(\pi_2\circ \varphi))\right),$$
where $\pi_i:X \rightarrow S_i$ is the projection onto the $i-$th factor of the product $X$. In fact, by Lefschet's Hyperplane Theorem, the group $H^{1,1}(Y)$ is isomorphic to $H^2(X)$. The dimension of the space of invariants with respect to $G$ is equal to the traces of the homomorphisms induced on the second cohomology group of $X= S_1 \times S_2$ by the elements of $G$. By linear algebra and the K\"{u}nneth formula, the traces on the cohomology groups of the product $X$ is the sum of the traces on the cohomology on the factors $H^2(S_i)$ for $i=1,2$. These traces can be computed via the Lefschetz fixed-point Theorem.  In this case we obtain $h^{1,1}(Y/G)=6$.
The same number could be obtained by studying the invariant space of $H^2(X,\C)$ with respect to the representation of $\Z_3$ given by
$$\varphi \mapsto \varphi^*\leftrightsquigarrow \left[ \begin{array}{ll}
A_1 & 0 \\
0 & A_2
\end{array}\right]$$
where $A_1$ and $A_2$ are respectively
$$\left[ \begin {array}{ccccccc} 0&0&0&1&0&0&0\\\noalign{\medskip}-1&-1
&-1&0&-1&-1&-2\\\noalign{\medskip}0&0&0&0&-1&-1&-1\\\noalign{\medskip}0
&-1&0&0&0&-1&-1\\\noalign{\medskip}0&0&-1&0&0&-1&-1
\\\noalign{\medskip}-1&0&0&0&0&-1&-1\\\noalign{\medskip}1&1&1&0&1&2&3
\end {array} \right]\,\mbox{ and }\,
\left[ \begin {array}{ccccccc} 0&0&0&-1&-1&0&-1\\\noalign{\medskip}0&0
&0&0&0&1&0\\\noalign{\medskip}-1&-1&-1&-1&-1&0&-2\\\noalign{\medskip}-
1&0&0&0&-1&0&-1\\\noalign{\medskip}0&-1&0&0&-1&0&-1
\\\noalign{\medskip}0&0&-1&0&-1&0&-1\\\noalign{\medskip}1&1&1&1&2&0&3
\end {array} \right].$$
By Theorem \ref{THM:Chi} we have $\chi(Y/\Z_3)=-18/3=-6$; so the Hodge diamond of the quotient is the foolowing one
$$\xymatrix@R=8pt@C=6pt{
   &   &   & 1   \\
   &   & 0 &   & 0 \\
   & 0 &   & 6 &   & 0 \\
1  &   & 9 &   & 9 &   & 1 \\
   & 0 &   & 6 &   & 0 \\
   &   & 0 &   & 0 \\
   &   &   & 1
}$$
In particular the height is $h(Y/\Z_3)=15$.
\vspace{4mm}

\noindent As shown in \cite{Dolgachev}, up to isomorphism of $\P^3$,  there are $3$ possible pairs $(f,G)$ where $f$ is a homogeneous polinomial of degree $3$ and $G$ is a group fixing $f$ of order $3$. One can show that $\Fix(f)$ is either one of the following: $3$ points or $6$ points, or one line. Thus, the least value that can be assumed by $\chi(\Fix{f})$ is $3$ if we exclude the case with one line of fixed points. Hence, the example presented here achieves the minumum for $h(Y/G)$.


\section{New Examples}\label{newexamples}

We present some new examples.


\subsection{$(\P^1\times\P^1)\times(\P^1\times\P^1)$ with maximal order $16$}\label{p14}

Take $S_1=S_2=\P^1\times \P^1$ and define $X$ to be $S_1\times S_2$. We begin to search for a group $H\leq (\Aut(\P^1))^4\ltimes S_4\leq \Aut(X)$ such that $|H|=8$ and $|\Fix(H)|<\infty$. Moreover, we want a section $s$ that is an eigenvector for the action of $H$ on $H^0(X,-K_X)$ and does not intersect $\Fix(H)$. After that, we try to extend $H$ to a group of order $16$ with the same properties.
\vspace{4mm}

\noindent Let $g\in (\Aut(\P^1))^4\ltimes S_4$ be an element of finite order. Without loss of generality, we can take $g$ of the form
$$\tilde{g}\circ \sigma:=\left(\ma{a_1},\ma{a_2},\ma{a_3},\ma{a_4}\right)\circ \sigma $$
where $\sigma\in {\mathfrak S}_4$ and $a_i \in \C^*$ for $i=1,2,3,4$.
\vspace{4mm}

\noindent If the order $o(g)$ of $g$ is $2$, we can choose $\sigma\in \left\lbrace \id,(12),(12)(34)\right\rbrace $. An easy check shows that
$$((x:y), (x:a_2y), (1:0), (1:0))$$ is a line of fixed points if $\sigma=(12)$ or $\sigma=(12)(34)$; so we must take $\sigma=\id$. The only possible case is $a_j=-1$ for which
$$g = g_2:=\left( \ma{-1},\ma{-1},\ma{-1},\ma{-1}\right) $$
and
$$\Fix(g_2)=\left\lbrace (P_1, P_2, P_3, P_4) \quad|\quad P_i\in\left\lbrace (1:0),(0:1) \right\rbrace \right\rbrace. $$
\vspace{2mm}

\noindent If $o(g)=4$, we can take $\sigma\in \left\lbrace \id,(12),(12)(34),(1234)\right\rbrace $. The automorphism $\sigma$ cannot be a permutation of order $4$. In fact, in this case $g^2$ would have a fixed line, as previously showed. Then, we have $\Fix(g)\subset \Fix(g^2)=\Fix(g_2)$.
Suppose $\sigma=\id$ or $\sigma=(12)$ and consider an eigenvector $s\in H^0(X,-K_X)=\O_X(2,2,2,2)(X)$. The condition $o(g)=4$ is then equivalent to $a_j^4=1$ for $\sigma=\id$ and $a_1^2a_2^2=a_3^4=a_4^4=1$ for $\sigma=(12)$.
Necessarily $g$ satisfies $g^2=g_2$ and this implies respectively $a_j^2=-1$ and $a_1a_2=a_3^2=a_4^2=-1$. One can see that for all $P\in \Fix(g_2)$ there exists a unique element $e_i$ of the usual basis of $\O_X(2,2,2,2)(X)$ such that $e_i(P)\neq 0$. For example, we have $$x_1^2x_2^2x_3^2x_4^2|_{(1:0)^4}=1\quad \mbox{ and} \quad x_1^2x_2^2x_3^2y_4^2|_{((1:0)^3, (0:1))}=1.$$
Then $s$ has to be an element of the eigenspace of both $x_1^2x_2^2x_3^2x_4^2$ and $x_1^2x_2^2x_3^2y_4^2$, but these have different eigenvalues ( $1$ and $a_4^2=-1$ respectively) so $s=0$. Suppose that $\sigma=(12)(34)$. The conditions $o(g)=4$ and $g^2=g_2$ show that $g$ has to be of the form $$\left( \ma{a_1},\ma{-a_1^{-1}},\ma{-a_3},\ma{-a_3^{-1}}\right)\circ(12)(34) $$
for some $a_3,a_4\in \C^*$.
\vspace{4mm}

\noindent Finally, take $g$ to be an automorphism of order $8$. Then $\sigma$ has to be a permutation of order\footnote{By this result, one can show that an element of order $16$ cannot exist in $(\Aut(\P^1))^4\ltimes S_4$ with the request we made. In fact, if such $g$ existed,  $g^2$ would have order $8$ and $g^2=(A_1,A_2,A_3,A_4)\circ \sigma^2$ with $\sigma^2$ permutation of order $4$. This is not possible for an element of ${\mathfrak S}_4$.} $4$. For example,  pick $\sigma=(1324)$ (that gives the following conditions on the $a_i$'s: $a_1a_2a_3a_4=-1$) and let $a_1=a_2=a_3=-a_4=1$.
A basis for $H^0(X,-K_X)$ is given by $\left\lbrace e_1,\dots e_{11} \right\rbrace $, where
\begin{align*}
e_1 = & x_1^2x_2y_2y_3^2x_4y_4+x_1y_1x_2^2x_3y_3y_4^2-x_1y_1y_2^2x_3^2x_4y_4+y_1^2x_2y_2x_3y_3x_4^2, \\
e_2 = & x_1^2x_2y_2x_3y_3y_4^2-x_1y_1x_2^2y_3^2x_4y_4+x_1y_1y_2^2x_3y_3x_4^2+y_1^2x_2y_2x_3^2x_4y_4, \\
e_3 = & x_1^2y_2^2x_3^2y_4^2+x_1^2y_2^2y_3^2x_4^2+y_1^2x_2^2x_3^2y_4^2+y_1^2x_2^2y_3^2x_4^2, \\
e_4 = & -x_1^2y_2^2x_3y_3x_4y_4+x_1y_1x_2y_2x_3^2y_4^2-x_1y_1x_2y_2y_3^2x_4^2+y_1^2x_2^2x_3y_3x_4y_4, \\
e_5 = & x_1^2x_2^2x_3^2y_4^2+x_1^2x_2^2y_3^2x_4^2+x_1^2y_2^2x_3^2x_4^2+y_1^2x_2^2x_3^2x_4^2, \\
e_6 = & x_1^2x_2y_2x_3^2x_4y_4+x_1^2x_2y_2x_3y_3x_4^2-x_1y_1x_2^2x_3^2x_4y_4+x_1y_1x_2^2x_3y_3x_4^2, \\
e_7 = & x_1^2x_2^2x_3^2x_4^2, \\
e_8 = & y_1^2y_2^2y_3^2y_4^2, \\
e_9 = & x_1^2y_2^2y_3^2y_4^2+y_1^2x_2^2y_3^2y_4^2+y_1^2y_2^2x_3^2y_4^2+y_1^2y_2^2y_3^2x_4^2, \\
e_{10} = & x_1^2x_2^2y_3^2y_4^2+y_1^2y_2^2x_3^2x_4^2, \\
e_{11} = & x_1y_1y_2^2x_3y_3y_4^2-x_1y_1y_2^2y_3^2x_4y_4+y_1^2x_2y_2x_3y_3y_4^2+y_1^2x_2y_2y_3^2x_4y_4.
\end{align*}

\noindent Now, we try to extend the group $H$.  Define $h$ to be the involution of $(\P^1)^4$ such that
$$(x_i:y_i)\longmapsto(y_i:x_i).$$
An easy check shows that $gh=hg$ and that the following hold:
$$\Fix(h)=\left\lbrace ((1:\pm1), (1:\pm1), (1:\pm1), (1:\pm1)) \right\rbrace$$
and
$$\Fix(g^4h)=\left\lbrace ((1:\pm i), (1:\pm i), (1:\pm i), (1:\pm i)) \right\rbrace.$$

\noindent For every $k\neq0,4$ we have $(g^kh)^2=g^{2k}h^2=g^{2k}$ so $\Fix(g^kh)\subset\Fix(g^4)=\Fix(g_2).$
This means that, defining $G$ to be the group generated by $g$ and $h$, $\Fix(G)$ is a finite set composed of $48$ points and $G\simeq \Z_8\times \Z_2$.
\vspace{4mm}

\noindent If we take $$s=\sum_{i=1}^{11}C_ie_i$$ and impose both $s(P)=1$ for all $P\in\Fix(g)$ and $h^*(s)=s$, we have the following conditions on the $C_i$'s:
$$C_3=C_5=C_7=C_8=C_9=C_{10}=1, C_1 = C_2, C_{11} = C_6, C_4 = 0.$$
By evaluating at the other fixed points, we obtain $4$ different non identically-zero linear-combinations of the $C_i$'s; so the generic invariant section does not intersect $\Fix(G)$.
For example, the section obtained by taking $C_1=1$ and $C_6=2$ fulfills all our requests. Moreover, it is smooth, so there exists a group of order $16=M(\P^1\times \P^1,\P^1\times \P^1)$ that acts freely on a Calabi-Yau threefold embedded in $(\P^1)^4$.
\vspace{4mm}

\noindent The representation of $G$ on $H^2(Y,\C)$ is given by
$$g\mapsto g^*\leftrightsquigarrow \left[ \begin {array}{cccc} 0&0&0&1\\\noalign{\medskip}0&0&1&0
\\\noalign{\medskip}1&0&0&0\\\noalign{\medskip}0&1&0&0\end {array}
 \right]\quad\mbox{ and }\quad h\mapsto h^* \leftrightsquigarrow \left[ \begin {array}{cccc} 1&0&0&0\\\noalign{\medskip}0&1&0&0
\\\noalign{\medskip}0&0&1&0\\\noalign{\medskip}0&0&0&1\end {array}
 \right] $$
so both $h$ and $g^4$ are trivial on $H^2(Y,\C)=H^2(\P^1,\C)^{\oplus 4}$.
\vspace{4mm}

\noindent This action has then a unique fixed class in $H^2(Y,\C)$ (the sum of the four $\P^1$'s). By Theorem \ref{THM:Chi}, we have $\chi(Y/G)=-128/16=-8$, so the Hodge diamond of the quotient $Y/G$ is the following one:
$$\xymatrix@R=8pt@C=6pt{
   &   &   & 1   \\
   &   & 0 &   & 0 \\
   & 0 &   & 1 &   & 0 \\
1  &   & 5 &   & 5 &   & 1 \\
   & 0 &   & 1 &   & 0 \\
   &   & 0 &   & 0 \\
   &   &   & 1
}$$
In particular, the height is $6$ and it's the least possible for a quotient of a Calabi-Yau in $(\P^1)^4$ because $h^{1,1}(Y/G)=1$.


\subsection{$dP_4\times dP_4$ with maximal order $8$}
\label{SEC:dP4xdP4}

As proved, for instance, in \cite{Dolgachev}, every del Pezzo surface of degree $4$ can be obtained as a complete intersection of two quadrics of $\P^4$. Moreover, one can choose the equations to be of the form
$$f=x_0^2+x_1^2+x_2^2+x_3^2+x_4^2\quad\mbox{ and }g=a_0x_0^2+a_1x_1^2+a_2x_2^2+a_3x_3^2+a_4x_4^2$$
where $a_i\neq a_j \in \C$ for $i\neq j$. We choose $$g=x_0^2-ix_1^2-x_2^2+ix_3^2$$
and $S_1\simeq S_2\simeq S=V(f,g)\subset \P^4$.
Let $r$ be the automorphism which sends $\left(x,y\right)$ to the point
$$\left((x_0: x_1: -x_2: x_3: -x_4), (y_0: y_1: -y_2: y_3: -y_4)\right).$$
Denote by $t$ the automorphism which sends $\left(x,y\right)$ to
$$\left((y_0: y_1: -y_2: -y_3: y_4), (x_0: x_1: x_2: x_3: x_4)\right).$$

Consider the groups $H=<r,t^2>\simeq\Z_2\times \Z_2$ and $G=<r,t>\simeq \Z_4\times \Z_2$.
\vspace{4mm}

\noindent By adjunction $-K_{S_1\times S_2}:=-K_X\simeq \O_X(5,5)\otimes \O_X(-4,-4)=\O_X(1,1)$. The morphism $\iota:S\times S \longrightarrow \P^4\times \P^4$ induces an isomorphism
$$\iota^*: H^0(\P^4\times \P^4,\O(1,1))\longrightarrow  H^0(S\times S, \O_X(1,1));$$
so we can use  $$\left\lbrace x_iy_j\right\rbrace_{0\leq i,j\leq 4}$$ as a basis of the space of sections of the anticanonical  bundle.
It is easy to see that the vector space $V$ spanned by $$\left\lbrace x_0y_0, x_1y_0, x_0y_1, x_1y_1, x_2y_2, x_3y_3, x_4y_4\right\rbrace$$ is such that for all $h\in H$ and for all $s\in V$, $h^*(s)=\lambda s$ for some $\lambda\in \C^*$.
By taking the generic section $s\in V$ and imposing $r^* s=t^* s = s$ (so that for every automorphism $g$ of $G$, $V$ is an eigenspace with respect to $g^*$), we obtain
$$s=A_1x_0y_0+A_3y_0x_1+A_3x_0y_1+A_4x_1y_1+A_7x_4y_4,$$
where $A_i \in \C$.
Let $a$ and $b$ be fixed roots of $2z^2+1+i$ and $2z^2+1-i$,  respectively. Then
$$Fix(r)=\left\lbrace (P, Q)\,|\, P,Q\in\left\lbrace (1:\pm a:0:\pm b: 0)\right\rbrace \right\rbrace$$
$$Fix(t^2)=\left\lbrace (P, Q)\,|\, P,Q\in\left\lbrace (\pm a:\pm b: 0: 0: 1)\right\rbrace \right\rbrace$$
and
$$Fix(rt^2)=\left\lbrace (P, Q)\,|\, P,Q\in\left\lbrace (\pm b:1:\pm a: 0: 0)\right\rbrace \right\rbrace.$$
To look for the fixed points of $G$ it suffices to know the fixed points of $r,t^2$ and $rt^2$. In fact, the following holds:
$$\Fix(t^3)=\Fix(t)\subseteq \Fix(t^2)=\Fix((rt)^2)\supseteq \Fix(rt)=\Fix(rt^3).$$
An easy check shows that for generic values of $A_1,A_3,A_4$ and $A_7$, the section $s$ does not intersect $\Fix(G)$.
\vspace{4mm}

\noindent We can check directly that the section corresponding to $A_1=1, A_3=-2, A_4=3$ and $A_7=1$ is smooth and doesn't intersect $\Fix(G)$; so there exists a Calabi-Yau threefold $Y$ embedded in $S_1\times S_2$ with $\Z_4\times \Z_2$ acting freely on $Y$.
\vspace{4mm}

\noindent We don't have an explicit description of a basis for $\Pic(Y)=\Pic(S_1)\oplus\Pic(S_2)\simeq \Z^{12}$, but we can use the Lefschetz Fixed Point fomula to get the traces we need to compute $h^{1,1}(Y)^G$.
For example, notice that $r=r_1\times r_2$ with $r_i\in\Aut(S_i)$; so the trace of $r^*: H^2(S_1\times S_2,\C)\rightarrow H^2(S_1\times S_2,\C)$ is equal to the sum of the traces of
$$r_i^*: H^2(S_i,\C)\rightarrow H^2(S_i,\C).$$
By recalling that $$16=\chi(\Fix(r))=\chi(\Fix(r_1\times r_2))=\chi(\Fix(r_i))^2$$ and by Lefschetz Fixed Point formula, we have
$$\Tr(r^*)=\Tr(r_1^*)+\Tr(r_2^*)=\chi(\Fix(r_1^*))-2+\chi(\Fix(r_2^*))-2=2(4-2)=4.$$
With the same method we obtain $\Tr((t^*)^2)=\Tr(r^*(t^*)^2)=4$.
We can write $t$ as $(t_1\times t_2)\circ \sigma$ where $\sigma$ is the  the permutation of the two copies of $S$. Hence $t^*$ will swap $H^2(S_1)$ and $H^2(S_2)$ in the sum $H^2(S_1)\oplus H^2(S_2)$ and this means that its trace is zero. In the same way we obtain $\Tr((t^*)^3)=\Tr(r^*t^*)=\Tr(r^*(t^*)^3)=0$.
Merging these results and recalling that $\chi(Y)=-32$, we obtain
$$h^{1,1}(Y/G)=\frac{12+4+4+4+0+0+0+0}{8}=3\quad \mbox{ and } \quad h^{1,2}(Y/G)=5$$
so the quotient has the following Hodge diamond
$$\xymatrix@R=8pt@C=6pt{
   &   &   & 1   \\
   &   & 0 &   & 0 \\
   & 0 &   & 3 &   & 0 \\
1  &   & 5 &   & 5 &   & 1 \\
   & 0 &   & 3 &   & 0 \\
   &   & 0 &   & 0 \\
   &   &   & 1
}$$
In particular, the height is $8$.


\subsection{$\P^2\times \P^2$ with maximal order $9$}\label{p2p2}

Let $(x_0:x_1:x_2)$ and $(y_0:y_1:y_2)$ be the projective coordinates on the two copies of $\P^2$ and set $a=\e^{2\pi i/3}$. Consider the automorphism of $\P^2\times \P^2:=X$ defined by
$$g:=(x_0 : a x_1 : a^2x_2)\times (y_0 : a y_1 : a^2y_2):=g_1\times g_2$$
and
$$h:=(x_1 : x_2 : x_0) \times (y_1 : y_2 : y_0):=h_1\times h_2.$$
It is easy to show that the group $G$ generated by $g$ and $h$ is isomorphic to $\Z_3\times \Z_3$.

\noindent Moreover, it is easy to see that
\begin{align*}
\Fix(G) = & (\Fix(g_1)\times \Fix(g_2))\cup (\Fix(h_1)\times \Fix(h_2))\cup \\
 &  (\Fix(g_1h_1)\times \Fix(g_2h_2))\cup (\Fix(g_1^2h_1)\times \Fix(g_2^2h_2))
\end{align*}
where
$$\Fix(g_i) = \left\lbrace(1: 0: 0), (0: 1: 0), (0: 0: 1)\right\rbrace $$
$$\Fix(h_i) = \left\lbrace(1: 1: 1), (1: a: a^2), (1: a^2: a)\right\rbrace $$
$$\Fix(g_ih_i) = \left\lbrace(1: 1: a^2), (1: a^2: 1), (a^2: 1: 1)\right\rbrace $$
$$\Fix(g_i^2h_i) = \left\lbrace(1: 1: a), (1: a: 1), (a: 1:1)\right\rbrace.$$
Consider the following global sections of $\O_{\P^2}(3)=-K_{\P^2}$:
\[ \begin{array}{cc}
e_{i,0} = x_0^3+a^{2i}x_1^3+a^ix_2^3 & e_{i,1} = x_0^2x_1+a^{2i}x_1^2x_2+a^ix_0x_2^2 \\
e_{i,2} = x_0x_1^2+a^{2i}x_1x_2^2+a^{i}x_0^2x_2 & e_{0} = x_0x_1x_2 \end{array} \]
Then $g^*(e_{i,j})=a^{j}e_{i,j}$, $h^*(e_{i,j})=a^{i}e_{i,j}$, $g^*(e_0)=h^*(e_0)=e_0$; hence $$\left\lbrace e_{0},e_{i,j}\right\rbrace_{0\leq i,j\leq 2}$$ is a basis of $H^0(\P^2,\O_{\P^2}(3))$ composed of eigenvectors of both $g^*$ and $h^*$.  Since
$$H^0(X,-K_X)\simeq H^0(\P^2,-K_{\P^2})\otimes  H^0(\P^2,-K_{\P^2}),$$
a basis for the space of invariant sections is given by $$\left\lbrace e_{i_1,j_1}\otimes e_{i_2,j_2}\right\rbrace_{i_1+i_2\equiv_3 0,j_1+j_2\equiv_3 0} \cup \left\lbrace e_{0}\otimes e_{0,0},e_{0,0}\otimes e_{0},e_{0}\otimes e_{0}\right\rbrace.$$
By direct computation,  we can show that the generic invariant section doesn't intersect $\Fix(G)$. Moreover, the system $|H^0(X,-K_X)^G|$ is base-point free. By Bertini's Theorem, the generic section is smooth. Hence there exists a Calabi-Yau threefold $Y$ embedded in $\P^2\times \P^2$ equipped with a free action of $G$.
\vspace{4mm}

\noindent The space $H^2(X,\Z)$ is free of rank two and is generated by $\pi_1^*H$ and $\pi_2^*H$ where $<H>= H^2(\P^2,\Z)$. Every automorphism of $\P^2$ fixes $H$, so $H^2(X,\C)^G= H^2(X,\C)$. This implies that the following is the Hodge diamond of $Y/G:$
$$\xymatrix@R=8pt@C=6pt{
   &   &   & 1   \\
   &   & 0 &   & 0 \\
   & 0 &   & 2 &   & 0 \\
1  &   & 11 &   & 11 &   & 1 \\
   & 0 &   & 2 &   & 0 \\
   &   & 0 &   & 0 \\
   &   &   & 1.
}$$
Its height is $13$. An element $g\in\Aut(\P^2\times\P^2)=(\Aut(\P^2)\times \Aut(\P^2)) \ltimes\Z_2$ of order $3$ has to be of the form $g=g_1\times g_2$ with $g_i\in\Aut(\P^2)$. This means that $H^{2}(X,Z)^{<g>}= H^{2}(X,\Z)$ and thus the minimum for $h(Y/G)$ is achieved by this example.


\subsection{$dP_5 \times dP_5$ with maximal order $5$}

Fix $P_1=(1:0:0)$, $P_2:=(0:1:0)$, $P_3:=(0:0:1)$ and $P_4:=(1:1:1)$ in ${\P}^2$. Let $S$ be the unique del Pezzo surface of degree five. It is well known that the automorphism group of $S$ is isomorphic to the symmetric group of order $120$. The sections of $\O_S(-K_{S})$ are the cubics through the points $P_i$ for $i=1,2,3,4$. Hence a basis of $H^0(dP_5, -K_{dP_5})$ can be taken to be
\[ \begin{array}{cc}
y_1:=x_0^2x_1-x_0x_1x_2 & y_2:=x_0^2x_2-x_0x_1x_2 \\
y_3:=x_1^2x_0-x_0x_1x_2 & y_4:=x_1^2x_2-x_0x_1x_2 \\
y_5:=x_2^2x_0-x_0x_1x_2 & y_6:=x_2^2x_1-x_0x_1x_2
\end{array} \]
where $x_0, x_1, x_2$ is a system of homogeneous coordinates on $\P^2$. Consider the following transformation $T$ on the projective plane, namely:
$$(x_0:x_1:x_2)\mapsto (x_0(x_0-x_2): x_0(x_0-x_1):(x_0-x_1)(x_0-x_2)).$$
\noindent It is easy to check that $T$ acts as automorphism of $S$ and its action on $H^0(S, -K_{S})$ is determined by
\[ \begin{array}{cc}
y_1 \mapsto y_1 & y_2 \mapsto y_1+y_5-y_2 \\
y_3 \mapsto y_2  & y_4 \mapsto y_2+y_3-y_1 \\
y_5 \mapsto -y_6+y_5-y_2 & y_6 \mapsto -y_4 - y_1 + y_3.
\end{array} \]
\noindent It's easy to see that the order of $T$ is five thus $G:=<T>$ is isomorphic to $\Z_5$. Let us now consider the action of $G$ diagonally on $X=S \times S$. We will use $x_0,x_1,x_2$ and $z_0,z_1,z_2$ as projective coordinates on the two $\P^2$'s we blow up to obtain the two copies of $S$. There is an action of $G$ on $H^0(X, - K_{X})$. Let $\omega$ be a primitive fifth root of unity. The space of invariants under this action is generated by the following polynomials, namely:
\[ \begin{array}{c}
f_1g_1=x_0x_1(x_0-x_2)z_0z_1(z_0-z_2), \\
 f_1g_2=x_0x_1(x_0-x_2)z_2(z_1-z_2)(z_0-z_1), \\
f_2g_1=x_2(x_1-x_2)(x_0-x_1)z_0z_1(z_0-z_2), \\
 f_2g_2=x_2(x_1-x_2)(x_0-x_1)z_2(z_1-z_2)(z_0-z_1), \\
h_1k_4, \quad h_2k_3, \quad h_3k_2, \quad h_4k_1,
\end{array}\]
where we set:
\[ \begin{array}{cl}
h_1 = & (1+\omega^2)y_1+(\omega^3-\omega^2)y_2+(-2-\omega-\omega^2-\omega^3)y_3+\\
& +y_4+\omega^2y_5-\omega y_6, \\
h_2 = & -(\omega+\omega^2+\omega^3)y_1+(1+2\omega+\omega^2+\omega^3)y_2+ \\
& +(\omega^3-1)y_3+y_4-(1+\omega+\omega^2+\omega^3)y_5-\omega^2 y_6, \\
h_3 = & (1+\omega)y_1+(-2\omega-1-\omega^2-\omega^3)y_2+(\omega^2-1)y_3+y_4+\\
& +\omega y_5-\omega^3 y_6, \\
h_4 = & y_1+(\omega^2-1)y_2+(\omega^3-1)y_3+(1+\omega^2+\omega)y_4-(\omega+\omega^2)y_5+\\
& -(\omega^2+\omega^3) y_6
\end{array} \]
and $k_i=h_i(z_0,z_1,z_2)$. It is easy to check that $h_i$ are eigenvectors with eigenvalue $\omega^i$ with respect to the action of $T$ on $H^0(S, - K_{S})$.

\noindent Let
\begin{equation}
\label{esse}
s:=A_1f_1g_1+A_2f_1g_2+A_3f_2g_1+A_4f_2g_2+A5h_1k_4+A_6h_2k_3+A_7h_3k_2+A_8h_4k_1,
\end{equation}
where $A_i$ are complex numbers not all of which are zero. For any choice of the $A_i$'s we get a section in $H^0(X , -K_X)$ which is invariant with respect to $G$.
\vspace{4mm}

\noindent The transformation $T$ acts with fixed points on $X$. They are given by $$(1:1/(1+\rho):1-\rho)$$ where $\rho$ satisfies the degree two equation $\rho^2 + \rho -1=0$ and thus there are four fixed points on $X$. Note that, by the Lefschetz Formula this is the least number of fixed points one could obtain. By a suitable choice of the $A_i$'s, we restrict to the locus $\Sigma$ such that the sections $s$ in \eqref{esse} do not pass through the fixed points above. It is easy to see that $\Sigma$ is not empty. For $s \in \Sigma$ the set of zeroes $Y=V(s)$ is thus invariant with respect to the free action of $G$ on it.
\vspace{4mm}

\noindent Now, we look for base points of the system above. First, we look for solutions on ${\mathbb P}^2 \times {\mathbb P}^2$ of the equations
$$f_1g_1 = f_1g_2 = f_2g_1= f_2g_2 = h_1k_4 = h_2k_3 = h_3k_2 = h_4k_1=0.$$
\ifVERSIONEPROLISSA 
Next, we recall that $S$ is obtained from ${\mathbb P}^2$ by blowing-up the points $P_i$; so there are $20$ base points, namely:
$$
Q_1:=(((1:0:0),(0:1)),(-\omega^2-\omega^3:1:-1-\omega^2-\omega^3)), \, \,
$$
$$
Q_2:=(((1:0:0),(0:1)),(1+\omega^2+\omega^3:1:\omega^2+\omega^3)),
$$
$$
Q_3:=((2+\omega^2+\omega^3:1+\omega^2+\omega^3:1),(0:1:1)), \, \,
$$
$$
Q_4:=((1:-1-b^2-b^3:b^3+b^2+2),(0:1:1)),
$$
$$
Q_5:=((0:1:1),((-\omega^2-\omega^3:1:-1-\omega^2-\omega^3)), \, \,
$$
$$
Q_6:=((0:1:1),(1+b^2+b^3:1:b^2+b^3)),
$$
$$
Q_7:=(((1,1,1),(1,1)), (-\omega^2-\omega^3:1:-1-\omega^2-\omega^3)), \, \, $$
$$
Q_8:=(((1:1:1),(1:1)),(1+\omega^2+\omega^3:1:\omega^2+\omega^3)),
$$
$$
Q_9:=(((1+\omega^2+\omega^3:1:\omega^2+\omega^3),((0:0:1),(1:1))), \, \, $$
$$
Q_{10}:=((-\omega^2-\omega^3:1:-1-\omega^2-\omega^3), ((0:0:1), (1:1))),
$$
$$
Q_{11}:=((1+\omega^2+\omega^3:1:\omega^2+\omega^3),((1:1:1),(1:1))), \, \, $$
$$
Q_{12}:=((-\omega^2-\omega^3:1:-1-\omega^2-\omega^3), ((1:1:1),(1:1))),
$$
$$
Q_{13}:=((-\omega^3-\omega^2-1:-\omega^3-\omega^2:1), ((1:0:0),(0:1))), \, \, $$
$$
Q_{14}:=((2+\omega^2+\omega^3:1+\omega^2+\omega^3:1), ((1:0:0),(0:1))),
$$
$$
Q_{15}:=(((0:1:0),(0:1)), (-\omega^2-\omega^3:1:-1-\omega^2-\omega^3)), \, \, $$
$$
Q_{16}:=(((0:1:0),(0:1)), (1+\omega^2+\omega^3:1:\omega^2+\omega^3)),
$$
$$
Q_{17}:=((2+\omega^2+\omega^3:1+\omega^2+\omega^3:1), ((0:1:0),(0:1))), \, \, $$
$$
Q_{18}:=((1:-1-\omega^2-\omega^3:\omega^3+\omega^2+2), ((0:1:0),(0:1))),
$$
$$
Q_{19}:=(((0:0:1),(1:1)), (-\omega^2-\omega^3:1:-1-\omega^2-\omega^3)), \, \, $$
$$
Q_{20}:=(((0:0:1),(1:1)), (1+\omega^2+\omega^3:1:\omega^2+\omega^3)).
$$
Let us briefly explain our notation. Each point $Q_i$ is a pair of two points in $S$. One (or possibly two) of them is in the open set which is isomorphic to ${\mathbb P}^2 \setminus \{ P_1, \ldots, P_4\}$. The other is in the blow-up, so we denote it by the point to be blown-up (for instance (1:0:0)) and the corresponding point on the exceptional ${\mathbb P}^1$ (for instance $(0:1)$). In the local chart $x_0 \neq 0$, the point $((1:0:0),(0:1))$ corresponds to $((0,0), (0:1))$ in the blow-up. As for the points $((1:1:1), (l:m))$ we always move the point $(1:1:1)$ to $(0:1:0)$ and then make computations in the local chart where the second coordinate is nonzero.
\vspace{4mm}
\else 
Next, we recall that $S$ is obtained from ${\mathbb P}^2$ by blowing-up the points $P_i$. After some computation we show that there are $20$ base points.
\vspace{4mm}
\fi 

\noindent For each of the base points we checked if they are smooth or not for the generic section. This is true if we restrict to an dense open set $\Omega$ of ${\mathbb P}^7$, where $\left\lbrace A_i\right\rbrace_{i=1..8}$ are interpreted as a homogeneous system of coordinates.
\ifVERSIONEPROLISSA 
For example, let us take the point $Q_8$.
\else 
For example, let us take the point $$(((1:1:1),(1:1)),(1+\omega^2+\omega^3:1:\omega^2+\omega^3)),$$ that is the point whose projection on $S_1$ is the point $(1:1)$ on the exceptional divisor associated to $(1:1:1)$ and whose projection on $S_2$ is
$(1+\omega^2+\omega^3:1:\omega^2+\omega^3)$.
\fi 
We first make the substitution $x_0=w_0+w_1, x_1=w_1, x_2=w_1+w_2$, so the point $(1:1:1)$ is mapped to the point $(0:1:0)$. Next we work in the local chart where the second coordinate is nonzero. Let $((u,v),(l:m))$ be the coordinate on blow-up. Since $m\neq 1$, we can consider affine coordinates $v, l$ and, by the equation of the blow-up, $u=vl$. Thus, we evaluate all the polynomials $f_1g_1, f_1g_2, f_2g_1, f_2g_2, h_1k_4, h_2k_3, h_3k_2, h_1k_4$ at $w_0=vl, w_1=1, w_2=v$. We divide by $v$ and then take the derivatives with respect to $v,l, z_0, z_1, z_2$. These must be evaluated at $l=1, v=0$ and $z_0=1+\omega^2+\omega^3,  z_1=1, z_2=\omega^2+\omega^3.$ Doing so yields conditions on the $A_i$'s. These conditions define the equations of a closed set, the complement of which is the non-empty open set $\Omega$. The intersection of $\Omega$ and $\Sigma$ yield an open set which contains sections $s$ which are invariant with respect to $G$, do not pass through fixed points and such that $(s=0)$ is smooth at the base points. By Bertini's Theorem a generic element of $\Omega \cap \Sigma$ is smooth. This yields a Calabi-Yau manifold $Y/G$ with Euler characteristic $-10$. As in section \ref{p14} we compute $h^{1,1}(Y/G)$ using the Lefschetz Formula and we obtain $h^{1,1}(Y/G)=2$. Then $h^{2,1}(Y/G)=7$ and the Hodge diamond is the following one:
$$\xymatrix@R=8pt@C=6pt{
   &   &   & 1   \\
   &   & 0 &   & 0 \\
   & 0 &   & 2 &   & 0 \\
1  &   & 7 &   & 7 &   & 1 \\
   & 0 &   & 2 &   & 0 \\
   &   & 0 &   & 0 \\
   &   &   & 1
}$$
Note that $Y/G$ realize the minimum for the height.


\subsection{$\P^1\times \P^1\times dP_4$ with maximal order $4$}

Let us consider again the del Pezzo surface $S_2$ of degree $4$ embedded in $\P^4$ used in section \ref{SEC:dP4xdP4}. If we denote with $g_1$ and $h_1$ the automorphism of $S_1=\P^1\times \P^1$ such that
$$g_1((x_{10}:x_{11}),(x_{20}:x_{21}))=((x_{10}:-x_{11}),(x_{20}:-x_{21}))$$
and
$$h_1((x_{10}:x_{11}),(x_{20}:x_{21}))=((x_{11}:x_{10}),(x_{21}:x_{20})),$$
we obtain the relation $g_1^2=h_1^2=g_1h_1g_1^{-1}h_1^{-1}=\id$ that is $<g_1,h_1>\simeq \Z_2\oplus \Z_2$.
The same holds for the automorphism $g_2$ and $h_2$ of $S_2$ such that
$$g_2((y_0: y_1: y_2: y_3: y_4))=(y_0: y_1: -y_2: y_3: -y_4)$$
and
$$h_2((y_0: y_1: y_2: y_3: y_4))=(y_0: y_1: -y_2: -y_3: y_4).$$
Denote by $g=g_1\times g_2$ and $h=h_1\times h_2$; hence we have $G:=<g,h>\simeq \Z_2\oplus \Z_2$.
\vspace{4mm}

\noindent We recall (see Section \ref{SEC:dP4xdP4}) that if $a$ and $b$ are fixed roots of $2z^2+1+i$ and $2z^2+1-i$ then
$$\Fix(g_2)=\left\lbrace (1:\pm a : 0 :\pm b : 0) \right\rbrace$$
$$\Fix(h_2)=\left\lbrace (\pm a:\pm b : 0 : 0 : 1) \right\rbrace$$
and
$$\Fix(g_2h_2)=\left\lbrace(\pm b:1:\pm a: 0: 0)\right\rbrace.$$
It is easy to see that $|\Fix(\alpha)|=4$ for each $\alpha\in<g_1,h_1>\setminus \left\lbrace \id \right\rbrace$ and, consequently, that $|\Fix(G)|=48$.
\vspace{4mm}

\ifVERSIONEPROLISSA 
\noindent Using $H^0(X,-K_X)\simeq  H^0(\P^1\times \P^1,\O(2,2))\otimes  H^0(\P^4,\O(1))$ one can easily prove that $H^0(X,-K_X)^G$ has dimension $12$ and that the following is a basis:
\[ \begin{array}{ccc}
x_{10}^2x_{20}^2y_0+x_{11}^2x_{21}^2y_0 & x_{10}^2x_{21}^2y_0+x_{11}^2x_{20}^2y_0 & x_{10}x_{11}x_{20}x_{21}y_0 \\
x_{10}^2x_{20}^2y_1+x_{11}^2x_{21}^2y_1 & x_{10}^2x_{21}^2y_1+x_{11}^2x_{20}^2y_1 & x_{10}x_{11}x_{20}x_{21}y_1 \\
x_{10}^2x_{20}x_{21}y_2-x_{11}^2x_{20}x_{21}y_2 & x_{10}x_{11}x_{20}^2y_2-x_{10}x_{11}x_{21}^2y_2 & x_{10}^2x_{20}x_{21}y_3+x_{11}^2x_{20}x_{21}y_3\\
x_{10}x_{11}x_{20}^2y_3+x_{10}x_{11}x_{21}^2y_3 & x_{10}^2x_{20}^2y_4-x_{11}^2x_{21}^2y_4 & x_{10}^2x_{21}^2y_4-x_{11}^2x_{20}^2y_4.
\end{array} \]
\else 
\fi 
\noindent Analogously to the previous cases, we can conclude that there exists a smooth Calabi-Yau threefold $Y\subset X$ and a group $G\simeq \Z_2\oplus \Z_2$ acting freely on it. The quotient has the following Hodge diamond
$$\xymatrix@R=8pt@C=6pt{
   &   &   & 1   \\
   &   & 0 &   & 0 \\
   & 0 &   & 5 &   & 0 \\
1  &   & 13 &   & 13 &   & 1 \\
   & 0 &   & 5 &   & 0 \\
   &   & 0 &   & 0 \\
   &   &   & 1.
}$$
Hence the height of the quotient is $18$.


\ifCUTSECTION 
\subsection{$dP_6\times \P^2$ and $dP_3\times \P^2$ with maximal order $3$}

A generator $g$ of a group isomorphic to $\Z_3$ that fulfills our request, in both cases,  has to be of the form $g_1\times g_2$.
Previous arguments show that to achieve the minimum for $h(Y/G)$ we can take
$$(dP_6,g_1)=(V(x_{10}x_{20}=x_{11}x_{21}=x_{12}x_{22}),x_{i,j}\mapsto x_{i,j+1})$$
and
$$(dP_3,g_1)=\left(V(x_0^3+x_1^3+x_2^3+x_3^3),\left[\begin{array}{cccc}
1 & 0 & 0 & 0\\
0 & 1 & 0 & 0\\
0 & 0 & \omega & 0 \\
0 & 0 & 0& \omega^2\end{array}\right]\right),$$
where $\omega$ is a primitive third root of unity. The automorphism $g_2$ con be diagonalized and, to obtain a finite number of fixed points, we have to take $$g_2=\left[\begin{array}{ccc}
 1 & 0 & 0\\
 0 & \lambda & 0 \\
 0 & 0& \lambda^2\end{array}\right]$$
where $\lambda$ is $\omega$ or $\omega^2$. For example, take $\lambda=\omega$.
\vspace{4mm}

\ifVERSIONEPROLISSA 
\noindent The following is a basis for $H^0(dP_6\times \P^2,-K_{dP_6\times \P^2})^G$
\[ \begin{array}{cc}
x_{10}x_{20}y_0^3 & x_{10}x_{20}y_1^3 \\
x_{10}x_{20}y_2^3 & x_{10}x_{20}y_0y_1y_2 \\
(x_{10}x_{21}+x_{11}x_{22}+x_{12}x_{20})y_0^3 & (x_{10}x_{21}+x_{11}x_{22}+x_{12}x_{20})y_1^3 \\ (x_{10}x_{21}+x_{11}x_{22}+x_{12}x_{20})y_2^3 & (x_{10}x_{21}+x_{11}x_{22}+x_{12}x_{20})y_0y_1y_2 \\ (x_{10}x_{22}+x_{11}x_{20}+x_{12}x_{21})y_0^3 & (x_{10}x_{22}+x_{11}x_{20}+x_{12}x_{21})y_1^3 \\ (x_{10}x_{22}+x_{11}x_{20}+x_{12}x_{21})y_2^3 & (x_{10}x_{22}+x_{11}x_{20}+x_{12}x_{21})y_0y_1y_2 \\ (x_{10}x_{21}+x_{11}x_{22}\omega^2+x_{12}x_{20}\omega)y_0y_1^2 & (x_{10}x_{21}+x_{11}x_{22}\omega^2+x_{12}x_{20}\omega)y_0^2y_2 \\ (x_{10}x_{21}+x_{11}x_{22}\omega^2+x_{12}x_{20}\omega)y_1y_2^2 & (x_{10}x_{22}+x_{11}x_{20}\omega^2+x_{12}x_{21}\omega)y_0y_1^2 \\ (x_{10}x_{22}+x_{11}x_{20}\omega^2+x_{12}x_{21}\omega)y_0^2y_2 & (x_{10}x_{22}+x_{11}x_{20}\omega^2+x_{12}x_{21}\omega)y_1y_2^2 \\
(x_{10}x_{21}+x_{11}x_{22}\omega+x_{12}x_{20}\omega^2)y_0y_2^2 & (x_{10}x_{21}+x_{11}x_{22}\omega+x_{12}x_{20}\omega^2)y_0^2y_1 \\ (x_{10}x_{21}+x_{11}x_{22}\omega+x_{12}x_{20}\omega^2)y_1^2y_2 & (x_{10}x_{22}+x_{11}x_{20}\omega+x_{12}x_{21}\omega^2)y_0y_2^2 \\ (x_{10}x_{22}+x_{11}x_{20}\omega+x_{12}x_{21}\omega^2)y_0^2y_1 & (x_{10}x_{22}+x_{11}x_{20}\omega+x_{12}x_{21}\omega^2)y_1^2y_2\end{array} \]
\noindent and
\[ \begin{array}{cccccc}
x_0y_0^3 & x_0y_1^3 & x_0y_2^3 & x_0y_0y_1y_2 & x_1y_0^3 & x_1y_1^3 \\
x_1y_2^3 & x_1y_0y_1y_2 & x_2y_0y_1^2 & x_2y_0^2y_2 & x_2y_1y_2^2 & x_3y_0y_2^2 \\
x_3y_0^2y_1 & x_3y_1^2y_2 & & & &\end{array} \]
is a basis for $H^0(dP_3\times \P^2,-K_{dP_3\times \P^2})^G$.
\vspace{4mm}
\else 
\noindent With this choices we have $$h^0(dP_6\times \P^2,-K_{dP_6\times \P^2})=24\quad \mbox{ and }\quad h^0(dP_3\times \P^2,-K_{dP_3\times \P^2})=14.$$
\fi 

\noindent It can be shown that the base locus of $|H^0(dP_6\times \P^2,-K_{dP_6\times \P^2})^G|$ is empty. Hence, by Bertini's Theorem, the generic invariant section is smooth and the action of $G$ on it is free. Thus, for $dP_6\times \P^2$ it is possible to find a Calabi-Yau threefold embedded in it with a free action of $\Z_3$. The quotient has the following Hodge diamond
$$\xymatrix@R=8pt@C=6pt{
   &   &   & 1   \\
   &   & 0 &   & 0 \\
   & 0 &   & 3 &   & 0 \\
1  &   & 21 &   & 21 &   & 1 \\
   & 0 &   & 3 &   & 0 \\
   &   & 0 &   & 0 \\
   &   &   & 1
}$$
and the height is $24$.
\vspace{4mm}

\noindent There are $9$ points in the base locus of $|H^0(dP_3\times \P^2,-K_{dP_3\times \P^2})^G|$. They are of the following form
$$((0,0,1,-\omega^k), P)\quad\mbox{ with }\quad  P\in\left\lbrace(1:0:0),(0:1:0),(0:0:1)\right\rbrace.$$
By direct computation, the section
$$H := x_0y_0^3+x_0y_1^3+x_0y_2^3+x_2y_0y_1^2+x_2y_0^2y_2+
x_2y_1y_2^2+x_3y_0y_2^2+x_3y_0^2y_1+x_3y_1^2y_2$$
gives a Calabi-Yau $Y$ in $dP_3\times \P^2$ and the action of $G$ restricted to $Y$ is free. The quotient $Y/G$ has the following Hodge diamond
$$\xymatrix@R=8pt@C=6pt{
   &   &   & 1   \\
   &   & 0 &   & 0 \\
   & 0 &   & 4 &   & 0 \\
1  &   & 13 &   & 13 &   & 1 \\
   & 0 &   & 4 &   & 0 \\
   &   & 0 &   & 0 \\
   &   &   & 1
}$$
and the height is $17$.
\vspace{4mm}

\noindent In both cases, notice that the heights are the lowest possible.


\subsection{$dP_6\times (\P^1\times\P^1)$ with maximal order $2$}
\label{SEC:dP6xP1xP1}

As previously remarked, a del Pezzo surface of degree $6$ is isomorphic to $V(f,g)\subset\P^2\times \P^2$ where
$$f=x_{10}x_{20}-x_{11}x_{21}\quad\mbox{ and }\quad g=x_{10}x_{20}-x_{12}x_{22}.$$
Let $(x_3:y_3)$ and $(x_4:y_4)$ be projective coordinates on the two copies of $\P^1$ and call $X$ the product $\P^1\times\P^1\times dP_6$ embedded in $\P^1\times\P^1\times \P^2\times\P^2$. The anticanonical bundle is
$$\O_X(-K_X)=\O_X(2,2,1,1)=\O_{S_1}(2,2)\otimes\O_{S_2}(1,1),$$
so
$$H^0(X,-K_X)\simeq  H^0(\P^1\times \P^1,\O_{\P^1 \times \P^1}(2,2))\otimes  H^0(dP_6,\O_{dP_6}(1,1))=$$
$$=<x_{10}x_{20}, x_{10}x_{21}, x_{10}x_{22}, x_{11}x_{20}, x_{11}x_{22}, x_{12}x_{20}, x_{12}x_{21}>\otimes \O_{\P^1\times \P^1}(2,2).$$
After a change of coordinates, all the involutions of $dP_6$ can be written\footnote{This follows from $$\Aut(dP_6)=({\mathfrak S}_3\times \Z_2)\ltimes (\C^*)^2$$ where $\Z_2$ is generated by the Cremona transformation $C(x_{1},x_{2})=(x_{2},x_{1})$.} in one of the following forms:
$$g_1((x_{10},x_{11},x_{12}),(x_{20},x_{21},x_{22}))=((x_{20},x_{22}/a,ax_{21}),(x_{10},ax_{12},x_{11}/a))$$
or
$$g_1((x_{10},x_{11},x_{12}),(x_{20},x_{21},x_{22}))=((x_{20},x_{21}/a,x_{22}/b),(x_{10},ax_{11},bx_{12})),$$
where $a,b \in {\mathbb C^*}$.
\vspace{4mm}

\noindent If $g_1$ is of the first form, $((xy:y^2/a:x^2),(xy:ax^2:y^2))$ is a line of fixed points; hence we have to take $g_1$ of the second form. In this case, the fixed points are $((1:x:y),(1:ax:by))$, where $ax^2=by^2=1$. Since $a,b\in(\C^*)^2$,  there are $4$ fixed points for all $a,b$. In particular $\min(\chi(\Fix(g_1)))=4$ if $g_1$ has a finite number of fixed points.
\vspace{4mm}

\noindent An involution of $\P^1\times \P^1$ with only a finite number of fixed points can be written in the form
$$g_2((x_3,y_3),(x_4,y_4))=(x_3,-y_3),(x_4,-y_4)).$$
Thus,  $$\Fix(g_2)=\left\lbrace\right ((1:0),(1:0)),((1:0),(0:1)),((0:1),(1:0)),((0:1),(0:1))\rbrace$$ and $\chi(\Fix(g_2))=4$.
By Lefschetz Theorem, the following holds:
$$h^{1,1}(Y)^G=\frac{1}{2}(h^2(X)+\Tr((g_1\times g_2)^*))=\frac{1}{2}(6+\chi(\Fix(g_1))-2+\chi(\Fix(g_1))-2))=$$
$$1+\frac{\chi(\Fix(g_1))+\chi(\Fix(g_1))}{2}.$$
As mentioned before, we have then $h^{1,1}(Y)^G\geq 5$.
\vspace{4mm}

\ifVERSIONEPROLISSA 
\noindent Take $g_1$ to be the Cremona transformation (this corresponds to the case $a=b=1$) and consider $G:=<g_1\times g_2>$. Then, we have $h^0(X,-K_X)^G=32$ with basis given by
\[ \begin{array}{ll}
x_{10}x_{20}x_{3}^2x_{4}^2
 & x_{10}x_{21}x_{3}^2x_{4}^2+x_{11}x_{20}x_{3}^2x_{4}^2 \\
x_{10}x_{22}x_{3}^2x_{4}^2+x_{12}x_{20}x_{3}^2x_{4}^2
 & x_{12}x_{21}x_{3}^2x_{4}^2+x_{11}x_{22}x_{3}^2x_{4}^2 \\
x_{10}x_{21}x_{3}y_{3}x_{4}^2-x_{11}x_{20}x_{3}y_{3}x_{4}^2
 & x_{10}x_{22}x_{3}y_{3}x_{4}^2-x_{12}x_{20}x_{3}y_{3}x_{4}^2 \\
x_{11}x_{22}x_{3}y_{3}x_{4}^2-x_{12}x_{21}x_{3}y_{3}x_{4}^2
 & x_{10}x_{20}y_{3}^2x_{4}^2 \\
x_{10}x_{21}y_{3}^2x_{4}^2+x_{11}x_{20}y_{3}^2x_{4}^2
 & x_{10}x_{22}y_{3}^2x_{4}^2+x_{12}x_{20}y_{3}^2x_{4}^2 \\
x_{11}x_{22}y_{3}^2x_{4}^2+x_{12}x_{21}y_{3}^2x_{4}^2
 & x_{10}x_{21}x_{3}^2x_{4}y_{4}-x_{11}x_{20}x_{3}^2x_{4}y_{4} \\
x_{10}x_{22}x_{3}^2x_{4}y_{4}-x_{12}x_{20}x_{3}^2x_{4}y_{4}
 & x_{11}x_{22}x_{3}^2x_{4}y_{4}-x_{12}x_{21}x_{3}^2x_{4}y_{4} \\
x_{10}x_{20}x_{3}y_{3}x_{4}y_{4}
 & x_{10}x_{21}x_{3}y_{3}x_{4}y_{4}+x_{11}x_{20}x_{3}y_{3}x_{4}y_{4} \\
x_{10}x_{22}x_{3}y_{3}x_{4}y_{4}+x_{12}x_{20}x_{3}y_{3}x_{4}y_{4}
 & x_{11}x_{22}x_{3}y_{3}x_{4}y_{4}+x_{12}x_{21}x_{3}y_{3}x_{4}y_{4} \\
x_{10}x_{21}y_{3}^2x_{4}y_{4}-x_{11}x_{20}y_{3}^2x_{4}y_{4}
 & x_{10}x_{22}y_{3}^2x_{4}y_{4}-x_{12}x_{20}y_{3}^2x_{4}y_{4} \\
x_{11}x_{22}y_{3}^2x_{4}y_{4}-x_{12}x_{21}y_{3}^2x_{4}y_{4}
 & x_{10}x_{20}x_{3}^2y_{4}^2 \\
x_{10}x_{21}x_{3}^2y_{4}^2+x_{11}x_{20}x_{3}^2y_{4}^2
 & x_{10}x_{22}x_{3}^2y_{4}^2+x_{12}x_{20}x_{3}^2y_{4}^2 \\
x_{11}x_{22}x_{3}^2y_{4}^2+x_{12}x_{21}x_{3}^2y_{4}^2
 & x_{10}x_{22}x_{3}y_{3}y_{4}^2-x_{12}x_{20}x_{3}y_{3}y_{4}^2 \\
-x_{10}x_{21}x_{3}y_{3}y_{4}^2+x_{11}x_{20}x_{3}y_{3}y_{4}^2
 & x_{11}x_{22}x_{3}y_{3}y_{4}^2-x_{12}x_{21}x_{3}y_{3}y_{4}^2 \\
x_{10}x_{20}y_{3}^2y_{4}^2
 & x_{10}x_{21}y_{3}^2y_{4}^2+x_{11}x_{20}y_{3}^2y_{4}^2 \\
x_{10}x_{22}y_{3}^2y_{4}^2+x_{12}x_{20}y_{3}^2y_{4}^2
 & x_{11}x_{22}y_{3}^2y_{4}^2+x_{12}x_{21}y_{3}^2y_{4}^2
\end{array} \]
and $G\simeq\Z_2$.
\vspace{4mm}
\else 
\noindent  Take $g_1$ to be the Cremona transformation (this corresponds to the case $a=b=1$) and consider $G:=<g_1\times g_2>$. Then, we have $h^0(X,-K_X)^G=32$.
\vspace{4mm}
\fi 

\noindent By direct computation, we can show that the generic invariant section $s$ has a smooth zero-locus $Y$. Furthermore, $Y$ doesn't intersect the locus of the fixed points of the group. This, and the previous deductions, implies that the following is the Hodge diamond of $Y/G$:
$$\xymatrix@R=8pt@C=6pt{
   &   &   & 1   \\
   &   & 0 &   & 0 \\
   & 0 &   & 5 &   & 0 \\
1  &   & 29 &   & 29 &   & 1 \\
   & 0 &   & 5 &   & 0 \\
   &   & 0 &   & 0 \\
   &   &   & 1.
}$$
Notice that $34$ is the minimal value we can obtain for $h(Y/G)$.

\nocite{Friedman}


\subsection{$dP_6 \times dP_4$ with maximal order $2$}

Call $S_1$ the only del Pezzo surface of degree $6$ and $S_2$ the del Pezzo surface of degree $4$ obtained as complete intersection of two quadrics of $\P^4$ considered in section \ref{SEC:dP4xdP4}. We look for an involution $g_1\times g_2\in\Aut(dP_6)\times \Aut(dP_4)$. By some consideration on the automorphism of $S_1$ already done in section \ref{SEC:dP6xP1xP1} we can take $g_1$ to be the Cremona transformation to obtain the least number of fixed points on $S_1$. Take $g_2$ to be the automorphism of $S_2$ already used in section \ref{SEC:dP4xdP4}, namely
$$g_2((y_0:\dots:y_4))=(y_0:y_1:-y_2:-y_3:y_4).$$
\ifVERSIONEPROLISSA 
\noindent It is easy to see that $H^0(X,-K_X)^G$ has dimension $18$ and a base is
\[ \begin{array}{ccc}
x_{10}x_{20}y_0, & x_{10}x_{21}y_0+x_{11}x_{20}y_0, & x_{10}x_{22}y_0+x_{12}x_{20}y_0, \\
x_{11}x_{22}y_0+x_{12}x_{21}y_0, & x_{10}x_{20}y_1, & x_{11}x_{20}y_1+x_{10}x_{21}y_1, \\
x_{12}x_{20}y_1+x_{10}x_{22}y_1, & x_{11}x_{22}y_1+x_{12}x_{21}y_1, & -x_{11}x_{20}y_2+x_{10}x_{21}y_2, \\
x_{10}x_{22}y_2-x_{12}x_{20}y_2, & x_{11}x_{22}y_2-x_{12}x_{21}y_2, & -x_{11}x_{20}y_3+x_{10}x_{21}y_3, \\
x_{10}x_{22}y_3-x_{12}x_{20}y_3, & -x_{12}x_{21}y_3+x_{11}x_{22}y_3, & x_{10}x_{20}y_4, \\
x_{11}x_{20}y_4+x_{10}x_{21}y_4, & x_{10}x_{22}y_4+x_{12}x_{20}y_4, & x_{11}x_{22}y_4+x_{12}x_{21}y_4.
\end{array} \]
\else 
\noindent It is easy to see that $H^0(X,-K_X)^G$ has dimension $18$.
\fi 
\noindent The fixed points are in the form
$$(P,(c_1:c_2:0:0:1))$$
where $P\in\left\lbrace(1 : \pm1 : \pm1 )\right\rbrace$ and $c_1$ and $c_2$ are roots of respectively $2z^2+1+i$ and $2z^2+1-i$.
Direct check is sufficient to show that the generic invariant section is not zero on the fixed points. Moreover, by observing that the system is base point free and by Bertini's Theorem, one has that the generic invariant section gives a smooth Calabi-Yau threefold embedded in $X$ with a free action on $\Z_2$. Using the Lefschetz formula and knowing that $\chi(Y/G)=-24$, one obtain that the quotient has the following Hodge diamond
$$\xymatrix@R=8pt@C=6pt{
   &   &   & 1   \\
   &   & 0 &   & 0 \\
   & 0 &   & 7 &   & 0 \\
1  &   & 19 &   & 19 &   & 1 \\
   & 0 &   & 7 &   & 0 \\
   &   & 0 &   & 0 \\
   &   &   & 1
}$$
The quotient hence has height $26$.

\else 
\subsection{Other similar examples}\label{other}

For brevity we don't treat explicitly some examples. These are some threefolds  in $\P^2\times dP_6, \P^2\times dP_3, (\P^1\times \P^1)\times dP_6$ and $dP_6\times dP_4$.
The threefolds in $\P^2\times dP_6$ and in $\P^2\times dP_3$ admit a free action of $\Z_3$ (in both cases $M(S_1,S_2)=3$). The quotients have Hodge diamonds respectively:
$$\xymatrix@R=8pt@C=6pt{
   &   &   & 1   \\
   &   & 0 &   & 0 \\
   & 0 &   & 3 &   & 0 \\
1  &   & 21 &   & 21 &   & 1 & \quad\mbox{and }\quad\\
   & 0 &   & 3 &   & 0 \\
   &   & 0 &   & 0 \\
   &   &   & 1
}
\xymatrix@R=8pt@C=6pt{
   &   &   & 1   \\
   &   & 0 &   & 0 \\
   & 0 &   & 4 &   & 0 \\
1  &   & 13 &   & 13 &   & 1 \\
   & 0 &   & 4 &   & 0 \\
   &   & 0 &   & 0 \\
   &   &   & 1
}$$
 These are threefolds with minimal height. The threefolds in $(\P^1\times\P^1)\times dP_6$ and in $dP_4\times dP_6$ admit a free action of $\Z_2$ (again this hits the maximum because $M(S_1,S_2)=2$ for these two cases). The Hodge diamonds are
$$\xymatrix@R=8pt@C=6pt{
   &   &   & 1   \\
   &   & 0 &   & 0 \\
   & 0 &   & 5 &   & 0 \\
1  &   & 29 &   & 29 &   & 1 & \quad\mbox{and }\quad\\
   & 0 &   & 5 &   & 0 \\
   &   & 0 &   & 0 \\
   &   &   & 1
}
\xymatrix@R=8pt@C=6pt{
   &   &   & 1   \\
   &   & 0 &   & 0 \\
   & 0 &   & 7 &   & 0 \\
1  &   & 19 &   & 19 &   & 1 .\\
   & 0 &   & 7 &   & 0 \\
   &   & 0 &   & 0 \\
   &   &   & 1
}$$

\fi 


\section{Results of non-existence}\label{results}

In this section we present some results of non-existence. In particular, we show that there are cases for which $M(S_1,S_2)>1$ but a group $G$ that fulfills our requests doesn't exist.


\subsection{$dP_8\times S$, with $S\in\left\lbrace\P^1\times \P^1,dP_8,dP_6,dP_4,dP_2\right\rbrace$}

We will show that in these cases $m(S_1,S_2,Y)=1$ for all $Y$. The key points are Corollary \ref{COR:Fix} and some structural results on $\Aut(dP_8)$.

\begin{lem}
If $S$ is a del Pezzo surface and $g\in\Aut(S)$ is such that $o(g)=p$ is prime, then $g$ has a fixed point.
\end{lem}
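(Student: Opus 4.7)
The plan is to apply the Holomorphic Lefschetz Fixed Point Formula, in the same spirit as the proof of Theorem \ref{THM:Quo}. First I would collect the relevant cohomological data of a del Pezzo surface. Since $S$ is smooth projective and $-K_S$ is ample, Kodaira vanishing applied with $L=-K_S$ (so that $K_S+L=0$) yields $H^{i}(S,\O_S)=0$ for $i>0$, while $H^{0}(S,\O_S)\simeq\C$ carries the trivial action of every automorphism. Consequently
$$L(g)\;=\;\sum_{i=0}^{2}(-1)^{i}\,\Tr\bigl(g^{*}\mid H^{i}(S,\O_S)\bigr)\;=\;1.$$

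On the geometric side, $g$ has finite (indeed prime) order, so $\Fix(g)\subset S$ is a disjoint union of smooth compact complex submanifolds, and the Atiyah--Bott--Lefschetz localization formula expresses $L(g)$ as a sum of contributions, one for each connected component of $\Fix(g)$ (for isolated non-degenerate fixed points, each contribution is $1/\det\bigl(1-(dg)_{P}^{-1}\bigr)$, but the precise shape of the integrand is immaterial here). If $\Fix(g)$ were empty, this sum would be empty, hence equal to zero, contradicting $L(g)=1$. Therefore $\Fix(g)\neq\emptyset$, proving the lemma.

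I do not foresee a real obstacle: the argument is a direct analogue of the one already used by the authors to show that the $G$-action on $H^{3,0}(Y)$ is trivial in Theorem \ref{THM:Quo}. The only input beyond the Lefschetz machinery is the vanishing $h^{1,0}(S)=h^{2,0}(S)=0$, which is automatic since any del Pezzo surface is rational (or alternatively via Kodaira vanishing as above). It is worth noting that the hypothesis that $p$ is prime is not actually used in this argument: the conclusion holds for every automorphism of finite order. The primality will presumably intervene only in the applications, for instance to control the structure of $\langle g\rangle$-representations on $H^{2}(S,\Q)$ in the subsequent results of Section \ref{results}.
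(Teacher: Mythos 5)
Your proof is correct, but it follows a genuinely different route from the paper's. The paper argues by contradiction through the quotient: if $\Fix(g)=\emptyset$, then primality of $p$ forces $\Fix(g^n)\subseteq\Fix\bigl((g^n)^m\bigr)=\Fix(g)=\emptyset$ for every $n\not\equiv 0 \pmod p$ (choosing $m$ with $nm\equiv 1$), so the whole cyclic group $G=\langle g\rangle$ acts freely; the quotient $S/G$ is then a smooth surface dominated by the rational surface $S$, hence rational and therefore simply connected, contradicting $\Pi_1(S/G)\simeq G\neq\{\id\}$. That argument leans on the primality hypothesis in an essential way (freeness of $g$ alone would not give freeness of the full group for composite order), and it also invokes rationality of quotients of rational surfaces and their simple connectedness. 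Your argument via the holomorphic Lefschetz fixed point formula bypasses all of this: the only inputs are $H^{i}(S,\O_S)=0$ for $i>0$ (which you correctly extract from Kodaira vanishing applied to $L=-K_S$, or from rationality) and the Atiyah--Bott localization, which makes $L(g)=1$ impossible for a fixed-point-free map. As you observe, this proves the stronger statement that \emph{every} finite-order automorphism of a del Pezzo surface has a fixed point, with primality playing no role; it is also consonant with the Lefschetz-formula techniques the authors use elsewhere (e.g.\ in Theorem \ref{THM:Quo} and in the $h^{1,1}$ computations). The trade-off is that you need the analytic localization machinery, whereas the paper's proof is purely topological/birational once one accepts that quotients of rational surfaces are rational. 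One small caveat: the topological Lefschetz number would not suffice here, since the trace of $g^*$ on $H^2(S,\Q)$ can be negative; your choice of the \emph{holomorphic} Lefschetz number is exactly what makes the argument work.
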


\begin{proof}
Every del Pezzo surface $S$ is a rational surface. Suppose that the fixed locus of $g$ is empty. Recall that $p$ is prime. Let $G:=<g>$ be the group generated by $g$. Then $\Fix(G)$ is empty. In fact, for every $n\not\equiv_p 0$ there exists $m$ such that $nm\equiv_p 1$; this implies
$$\Fix(g^n)\subset\Fix((g^n)^m)=\Fix(g).$$
Therefore $R:=S/G$ is a smooth surface and $R$ is rational. In particular $\Pi_1(R)=\left\lbrace\id\right\rbrace$. But this is not possible because $S$ is simply connected, so $\Pi_1(R)\simeq G\not\simeq\left\lbrace\id\right\rbrace$. Hence, $g$ must have at least one fixed point.
\end{proof}

\begin{cor}
\label{COR:Fix}
For every finite subgroup $G$ of $\Aut(S)$, $|\Fix(G)|>0$.
\end{cor}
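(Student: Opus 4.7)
The plan is to deduce the corollary from the lemma via one application of Cauchy's theorem. Let $G$ be a finite nontrivial subgroup of $\Aut(S)$, and let $p$ be any prime dividing $|G|$. By Cauchy's theorem, there exists $g\in G$ of order exactly $p$. The lemma, applied to this $g$, produces a point $P\in S$ with $g(P)=P$.

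It remains to record that such a $P$ lies in $\Fix(G)$ under the convention used earlier in the paper. Namely, as visible in the $dP_6\times dP_6$ example (where $\Fix(G)$ is written as the union $\Fix(g_3)\cup\Fix(g_4^2)$) and as required by the design principle ``we must choose groups whose action on $X$ has at most a finite number of fixed points'', $\Fix(G)$ denotes the locus of points with nontrivial stabilizer, that is,
\[
\Fix(G)=\bigcup_{h\in G\setminus\{\id\}}\Fix(h).
\]
With this definition, $\Fix(g)\subseteq \Fix(G)$, so $P\in\Fix(G)$ and $|\Fix(G)|>0$. (The case $G=\{\id\}$ is either excluded or vacuous, since a trivial group contains no nontrivial element to contribute a fixed locus.)

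There is essentially no obstacle in this step: the Lemma did all of the work by using rationality (hence simple connectedness) of del Pezzo surfaces to rule out free actions of groups of prime order, and Cauchy's theorem is what lets us extract such a prime-order element from an arbitrary finite $G$.
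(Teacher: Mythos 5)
Your proof is correct and matches the paper's (implicit) argument: the corollary is stated without proof as an immediate consequence of the lemma, and the intended reasoning is exactly your Cauchy's-theorem step combined with the lemma. You have also correctly identified the paper's convention that $\Fix(G)$ denotes the union of the fixed loci of the nontrivial elements of $G$ (as confirmed by the computations of $\Fix(G)$ in the examples), under which a single prime-order element with a fixed point suffices.
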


\noindent By \cite{Dolgachev}, every automorphism of a del Pezzo surface $S$ of degree $8$ comes from an automorphism of $\P^2$ that fixes the point $R$ such that $S=\Bl_{\left\lbrace R\right\rbrace}\P^2$.
Suppose $S\neq dP_8$. Then we search for a group $G\leq\Aut(dP_8)\times \Aut(S)$. We are interested in the cases $S\in\left\lbrace\P^1\times \P^1,dP_6,dP_4,dP_2\right\rbrace$ for which $M(dP_8,S)$ is respectively $16,2,4$ and $2$. It is then enough to show that there are not groups of order $2$ whose action is free on $Y$. Let $g=(g_1,g_2)$ be an involution. By Corollary \ref{COR:Fix} there exists a fixed point $P$ of $g_2$. The automorphism $g_1$ comes from an involution of $\P^2$, hence it has a line $L$ of fixed points, therefore $L\times \left\lbrace P\right\rbrace$ is a line of fixed points for $g$.
\vspace{4mm}

\noindent If $S=dP_8$, then $\Aut(dP_8^{\times 2})=\Aut(dP_8)^{\times 2}\ltimes\Z_2$. Let $G=<g>$ where $g=(g_1,g_2)$. Using the same result as above, we will have a surface of fixed points. Then, it suffices to analyze the case $g=(g_1, g_2)\circ \tau$, where $\tau$ is the involution that switches the two copies of $dP_8$. Then, by changing projective coordinates, we can assume that
$$(g_1,g_2)=\left(\left[\begin{array}{ccc}
 a & 0 & 0\\
 0 & b & 0 \\
 0 & 0& 1\end{array}\right],\left[\begin{array}{ccc}
 a^{-1} & 0 & 0\\
 0 & b^{-1} & 0 \\
 0 & 0& 1\end{array}\right]\right)$$
for some $a,b\in\C^*$. It is easy to see that $((ax:by:0),(x:y:0))$ is a line of fixed points.
\vspace{4mm}

\noindent In conclusion, we have shown that $m(dP_8,S,Y)=1$ for a del Pezzo surface $S$ (here we have checked all the cases for which $M(dP_8,S)\neq1$) and for all $Y$ Calabi-Yau embedded in $dP_8\times S$.


\subsection{$dP_7\times dP_7$ with estimated maximal order $7$}

There is only one del Pezzo surface $S$ of degree $7$. It is given as the blow-up of $\P^2$ in $P_0=(1:0:0)$ and $P_1=(0:1:0)$. We will show that there does not exist a section $s$ of $-K_{S\times S}$ such that $g^*s=c s$ for some $c\in\C^*$ and $ g\in\Aut(S\times S)$ of order $7$ which doesn't intersect the fixed locus of $<g>$.
\vspace{4mm}

\noindent By \cite{Dolgachev}, every automorphism of a del Pezzo surface of degree $7$ comes from an element of $PGL(3)$ fixing the set $\left\lbrace P_0, P_1\right\rbrace$. Thus, we have
$$\Aut(S)\simeq \left<\left[\begin{array}{ccc}
1 & 0 & b \\
0 & a & c \\
0 & 0 & d \end{array}\right],\left[\begin{array}{ccc}
0 & 1 & 0 \\
1 & 0 & 0 \\
0 & 0 & 1 \end{array}\right]\right>.$$
 Recall that $\Aut(S\times S)=\Aut(S)^{\times 2}\ltimes \Z_2$. Since we need $g$ of order $7$, we have to choose an element of the form $g=(g_1, g_2)$, where $g_i\in\Aut(S)$ and
$$g_i=\left[\begin{array}{ccc}
1 & 0 & b_i \\
0 & a_i & c_i \\
0 & 0 & d_i \end{array}\right].$$
After a change of projective coordinates that fixes the points $P_0$ and $P_1$, we may assume $b_i=c_i=0$ so that $g_i$ is in diagonal form. The condition $o(g)=7$ gives $a_i^7=d_i^7=1$. Since we need a finite number of fixed points, we must impose $a_i\neq1\neq d_i$ and $a_i\neq d_i$.
\vspace{4mm}

\noindent In conclusion, we can take $g$ of the form
$$\left(\left[\begin{array}{ccc}
1 & 0 & 0 \\
0 & \lambda^{m_1} & 0 \\
0 & 0 & \lambda^{n_1} \end{array}\right]\times \left[\begin{array}{ccc}
1 & 0 & 0 \\
0 & \lambda^{m_2} & 0 \\
0 & 0 & \lambda^{n_2} \end{array}\right]\right)$$
where $\lambda=\e^{2\pi i/7}$ and $0\neq n_i,m_i$ and $n_i\neq m_i$.
\vspace{4mm}

\noindent The fixed points of $g_i$ as an automorphism of $\P^2$ are $P_0,P_1$ and $P_2$, whereas the fixed points of $g_i$ as an automorphism of $S$ are $$\left\lbrace (P_0,Q),(P_1,Q),P_2\, |\, Q\in \left\lbrace(1:0),(0:1)\right\rbrace\right\rbrace.$$
Here, for example, with $((0:1:0),(1:0))$ we mean the point $(1:0)$ on the exceptional divisor $E_1=\pi^{-1}(P_1)$, where we use the standard local description of $S$ in a neighbourhood of $E_1$ as the surface of $\C^2\times\P^1$ such that $um=vl$ with $\left\lbrace((0,0),(l:m))\right\rbrace= E_1$. Hence, in total, $G:=<g>$ has $25$ fixed points.
\vspace{4mm}

\noindent We blow up $\P^2$ in $P_0$ and $P_1$. Then, the following isomorphism holds:
$$H^0(S,-K_2)\simeq <x_2^3,x_0^2x_1,x_0^2x_2,x_0x_1^2,x_0x_2^2,x_1^2x_2,x_1x_2^2,x_0x_1x_2>.$$
The correspondence is given by taking the strict transform of a polynomial see as a global section of $\O_{\P^2\times \P^2}(3,3)$.
We call $e_i$ the elements of the base on the first del Pezzo surface and $f_i$ the elements of the base on the second one so that, by the K\"unneth formula, we obtain
$$H^0(S\times S,-K_{S\times S})\simeq <e_i\otimes f_j>.$$
Suppose that $s$ is an eigenvector of $H^0(S\times S,-K_{S\times S})$ and that $s(P)\neq 0$ for all $P$ fixed points of $G$. Then, for example, $$s(((1:0:0),(1:0)),((1:0:0),(1:0)))\neq 0$$ if and only if $s$ belongs to the eigenspace of $x_0^2x_1y_0^2y_1$ and
$$s(((1:0:0),(1:0)),((1:0:0),(0:1)))\neq 0$$ if and only if $s$ is in the eigenspace of $x_0^2x_1y_0^2y_2$.
But these two eigenvectors have corresponding eigenvalues $\lambda^{m_1+m_2}$ and $\lambda^{m_1+n_2}$ and these numbers are different if and only if $m_2\neq n_2$, which it is true by hypotesis. This means that $s$ must be zero and we have a contradiction.
\vspace{4mm}

\noindent Albeit  $M(dP_7,dP_7)=7$, this shows that an automorphism of $S\times S$ with finite order cannot act freely on a smooth section of $-K_{S\times S}$.

\subsection{$dP_6\times dP_3$ with estimated maximal order $9$}

In this case recall that $M(dP_3,dP_6)=9$. Nonetheless, the maximum order of $G$ to have a free action on a Calabi-Yau threefold $Y$ embedded in $X$ is $3$. We will also give an example for which $m(dP_6,dP_3,Y)=3$.
\vspace{4mm}

\noindent Suppose that $G\leq\Aut(dP_6)\times \Aut(dP_3)$ has order $9$. Then either $G\simeq \Z_9$ or $G\simeq \Z_3\times \Z_3$. First, we will show that if $G\simeq \Z_9$ then $G$ must have a fixed curve and so it can't satisfy our assumption on $G$. Next, we will deal with the case $G\simeq \Z_3\times \Z_3$. We'll first find all the groups whose fixed locus is finite. Essentially, this will be done by projecting $G$ on $\Aut(dP_6)$ and $\Aut(dP_3)$ so that the projections $G_1$ and $G_2$ satisfy $G_1\simeq G_2\simeq G\simeq \Z_3\times \Z_3$. There is only one useful choice for $G_2=<g_2,h_2>$ whereas there are infinitely many possibilities for $G_1$, which  are parametrized by $(\C^*)^2$. Once we fix $G_1:=<u,v>$, we will consider all the possible $G'$s such that the projection of $G$ on $\Aut(dP_3)$ and $\Aut(dP_6)$ are $G_1$ and $G_2$, respectively. This will be done by choosing all the possible pairs $(g_1,h_1)$, not necessarily equal to $(u,v)$, that generate $G_1$. We thus  consider the group $G:=<g,h>$, where $g=g_1\times g_2$ and $h=h_1\times h_2$. For every case we have checked that all the sections of $H^0(X,-K_X)$ that are eigenvectors of both $g^*$ and $h^*$ are zero on a fixed point of the group $G$ (we will show an explicit calculation for one of the cases).
\vspace{4mm}

\noindent Suppose that $G\simeq \Z_9$ and consider its projection $G_1$ on $\Aut(dP_3)$. Necessarily, $G_1\simeq G$. On the contrary, if $G=<g_1\times g_2>$ with $g_1^3=\id$, $G$ would have infinitely many fixed points. Hence $G_1$ has to be a group isomorphic to $\Z_9$ in $\Aut(dP_3)$. If $S$ is a smooth cubic surface in $\P^3$ and if $g_1\in \Aut(S)$ has order $9$ then, by \cite{Dolgachev}, there exist a projective automorphism of $\P^3$ such that $$(S,g_1)=\left(V(x_0^3+x_2^2x_0+x_1^2x_2+x_0^2x_1),\left[\begin{array}{cccc}
1 & 0 & 0 & 0\\
0 & a^4 & 0 & 0\\
0 & 0 & a & 0  \\
0 & 0 & 0 & a^7\end{array}\right]\right)$$
where $a$ satisfies $a^3\neq 1 = a^9$. On the other hand, we have
$$g_1^3=\left[\begin{array}{cccc}
1 & 0 & 0 & 0\\
0 & a^3 & 0 & 0\\
0 & 0 & a^3 & 0  \\
0 & 0 & 0 & a^3\end{array}\right].$$
Hence $\Fix(<g_1>)$ contains a curve $C$. This means that, by Corollary \ref{COR:Fix}, we have a fixed curve in $\Fix(G)$, which contradicts our assumptions.
\vspace{4mm}

\noindent Suppose, now, that $G\simeq \Z_3\times \Z_3\leq \Aut(dP_6)\times \Aut(dP_3)$ and consider the projection $G_2$ on $\Aut(dP_3)$ so that $G_2\simeq G$. Fix two generators $g_2,h_2$ of $G_2$ and consider $dP_3=V(f)\subset \P^3$. By \cite{Dolgachev}, if $V(f)$ is a smooth cubic and $\tilde{G}\simeq \Z_3\times \Z_3\leq \Aut(V(f)),$ we can change coordinates to obtain $f=\sum y_i^3$. In this case $\Aut(V(f))\simeq \Z_3^3 \ltimes S_4$, where each $\Z_3$ acts as multiplication of a variable by $a^k$ (we write the elements in $\Z_3^3$ as $(1,a^{k_1},a^{k_2},a^{k_3})$) and $S_4=\Sym(0,1,2,3)$ is generated by the permutation of the variables. By requiring $|\Fix(G_2)| < \infty$ we obtain $G_2\leq \Z_3^3$. There is only one group isomorphic to $G_2$ in $\Z_3^3$ that has a finite number of fixed points on $V(f)$ and it is $<g_2,h_2>$ where $g_2=(1,1,a,a^2)$ and $h_2=(1,a,a^2,a^2)$. We call $V_{i,j}^{(2)}$ the maximal subspace of $H^0(dP_3,-K_{dP_3})$ such that $g_2^*(s)=a^i s$ and $h_2^*(s)=a^j s$ for every $s\in V_{i,j}^{(2)}$. This vector space is the intersection of the eigenspaces $\Lambda_{a^i}$ of $g_2$ and $\Lambda_{a_j}'$ of $h_2$ relative to $a^j$. The following table summarizes the situation providing generators for these spaces.
\vspace{4mm}

\[ \begin{array}{c||c|c|c}
g_2 \backslash h_2 & \Lambda_{1}' & \Lambda_{a}' & \Lambda_{a^2}'  \\ \hline\hline
\Lambda_{1}     & x_0 & & \\ \hline
\Lambda_{a}     & x_1 & & \\ \hline
\Lambda_{a^2} & & x_2 & x_3
 \end{array} \]

\noindent Now, consider the projection $G_1$ of $G$ on $\Aut(dP_6)=(S_3\times \Z_2)\ltimes (\C^*)^2$. Any element of order $3$ can be written in the form $\diag(1,b,c)\circ (123)^k$ for some fixed $b,c\in\C^*$ and $k=0,1,2$. Easy arguments show that $G_1$ cannot satisfy $G_1\leq (\C^*)^2$ (if it happens, one has $|\Fix(G_1)|=\infty$) and that $G_1$ has exactly two non-trivial elements in $(\C^*)^2$. These are $\diag(1,a,a^2)$ and its inverse. Moreover, these two elements commute with every element of the form $(1,b, c)\circ (123)^k$, thus every subgroup of $\Aut(dP_6)$ isomorphic to $\Z_3\times \Z_3$ and with a finite number of fixed points can be written in the form  $<u,v>$ where
$$u=\diag(1:a:a^2)\quad\mbox{ and }\quad v=\diag(1:b:c)\circ (123)$$
for some fixed $b,c\in \C^*$. We define $d$ to be a fixed third root of $bc$. Set
\begin{align*}
F_{0} = & x_{10}x_{20},\\
F_{1} = & x_{10}x_{21}+\frac{1}{b}x_{11}x_{22}+\frac{1}{c}x_{12}x_{20},\\
F_{2} = & x_{10}x_{22}+\frac{1}{c}x_{11}x_{20}+\frac{b}{c}x_{12}x_{20},\\
F_{3} = & x_{10}x_{21}+\frac{a^2}{b}x_{11}x_{22}+\frac{a}{c}x_{12}x_{20},\\
F_{4} = & x_{10}x_{22}+\frac{a^2}{c}x_{11}x_{20}+\frac{ab}{c}x_{12}x_{20},\\
F_{5} = & x_{10}x_{21}+\frac{a}{b}x_{11}x_{22}+\frac{a^2}{c}x_{12}x_{20},\\
F_{6} = & x_{10}x_{22}+\frac{a}{c}x_{11}x_{20}+\frac{a^2b}{c}x_{12}x_{20}.
\end{align*}

\noindent Then $F_j$ is an eigenvector of both $u$ and $v$ and the corresponding eigenvalues are the ones in the following table:
\[ \begin{array}{c||c|c|c}
u \backslash v & \Lambda_{1} & \Lambda_{a} & \Lambda_{a^2}  \\ \hline\hline
\Lambda_{1}     & F_0 & F_2 & F_1 \\ \hline
\Lambda_{a}     &        & F_4 & F_3 \\ \hline
\Lambda_{a^2} &       &  F_6 & F_5
 \end{array} \]
This shows that $\left\lbrace F_j\right\rbrace$ form a base for $H^0(dP_6,-K_{dP_6})$. The following are the fixed points of the elements of $G_1$ and $G_2$:
\[ \begin{array}{c||c}
\mbox{Element} & \mbox{ Fixed points } (k=0,1,2) \\ \hline\hline
 & ((1:0:0),(0:1:0)),((1:0:0),(0:0:1)),\\
u, u^2 & ((0:1:0),(1:0:0)), ((0:1:0),(0:0:1)), \\
 & ((0:0:1),(1:0:0)),((0:0:1),(0:1:0)) \\ \hline
v, v^2 & ((1:da^k:\frac{(da^k)^2}{b},(1:\frac{1}{da^k}:\frac{b}{(da^k)^2}) \\ \hline
uv, u^2v^2 & ((1:da^k:\frac{(da^k)^2}{ba},(1:\frac{1}{da^k}:\frac{ba}{(da^k)^2}) \\ \hline
u^2v, uv^2 & ((1:da^k:\frac{(da^k)^2}{ba^2},(1:\frac{1}{da^k}:\frac{ba^2}{(da^k)^2})
 \end{array} \]
\[ \begin{array}{c||c}
\mbox{Element} & \mbox{ Fixed points } (k=0,1,2) \\ \hline\hline
g_2, g_2^2 & (1:-a^k:0:0) \\ \hline
h_2, h_2^2 & (0:0:1:-a^k) \\ \hline
g_2h_2, g_2^2h_2^2 & (1:0:-a^k:0),(0:1:0:-a^k) \\ \hline
g_2h_2^2, g_2^2h_2 & (1:0:0:-a^k),(0:1:-a^k:0)
 \end{array} \]

\noindent Suppose $g_1=u$. Let $h_1$ be any element of $G_1$ such that $G_1=<g_1,h_1>$ and denote $Q_1=((1:0:0),(0:1:0))$ and $Q_2=((1:0:0),(0:0:1))$. Then
$$P_1:=((1:0:0),(0:1:0),(1:-1:0:0))$$
and
$$P_2:=((1:0:0),(0:0:1),(1:-1:0:0))$$
are fixed points of $g=g_1\times g_2$. Suppose that
$$s=\sum_{i,j}a_{i,j}F_i y_j$$
is a section such that $g^*(s)=a^{k_1}s$ and that $s(P_j)\neq 0$. Then
$$s(P_1)=\sum_{i=2,4,6}(a_{i,0}-a_{i,1})F_i(Q_1)\neq 0$$
and
$$s(P_2)=\sum_{i=1,3,5}(a_{i,0}-a_{i,1})F_i(Q_2)\neq 0.$$
This means that at least one between $x_iF_j$ with $i=0,1$ and $j=2,4,6$ has a non zero coefficient and the same is true for $x_iF_j$ with $i=0,1$ and $j=1,3,5$. But, if $i=0,1$, $g^*(x_iF_j)=a^2 x_iF_j$ if $j=2,4,6$  and  $g^*(x_iF_j)=a x_iF_j$ if $j=1,3,5$. Then each eigenvector of $g$ is zero if evaluated in $P_1$ or in $P_2$.
\vspace{4mm}

\noindent The same result is true for every other case: we have checked that, for every $b,c\in (\C^*)$, for every choice of $g_1,h_1$ generators of $G_1=<u,v>$, every section of $H^0(X,-K_X)$ that is an eigenvector of both $g$ and $h$ where $g=g_1\times g_2$ and $h=h_1\times h_2$ is zero on at least one fixed point of $G=<g,h>$. In conclusion the restriction of the action of a group $G\leq \Aut(dP_6)\times \Aut(dP_3)$ of order $9$ to a Calabi-Yau threefold $Y\subset dP_6\times dP_3$ cannot be free. Hence $m(dP_6,dP_3,Y)<M(S_1,S_2)=9$ for every $Y$.
\vspace{4mm}

\noindent We have obtained $m(dP_6,dP_3,Y)\leq 3$ for all $Y$. We now give an example such that $m(dP_6,dP_3,Y)=3$. Take $dP_3$ to be the Fermat surface in $\P^3$. Call $g_1$ the automorphism of $dP_6$ such that $x_{i,j}\mapsto x_{i,j+1}$
and $g_2$ the authomorphism
$$\left[\begin{array}{cccc}
1 & 0 & 0 & 0\\
0 & 1 & 0 & 0\\
0 & 0 & \omega & 0 \\
0 & 0 & 0 & \omega^2\end{array}\right]$$
of $dP_3$. Notice that the minimum for the number of fixed points for an automorphism of order $3$ in $\Aut(dP_6)\times \Aut(dP_3)$ is achieved by $g=g_1\times g_2$.
\ifVERSIONEPROLISSA 
The following is a basis for $H^0(X,-K_X)^G$ where $G=<g>$, namely:
\[ \begin{array}{cc}
x_{10}x_{20}y_0 & x_{10}x_{20}y_1 \\
(x_{10}x_{21}+x_{11}x_{22}+x_{12}x_{20})y_0 & (x_{10}x_{21}+x_{11}x_{22}+x_{12}x_{20})y_1 \\ (x_{10}x_{22}+x_{11}x_{20}+x_{12}x_{21})y_0 & (x_{10}x_{22}+x_{11}x_{20}+x_{12}x_{21})y_1 \\  (x_{10}x_{21}+x_{11}x_{22}\omega^2+x_{12}x_{20}\omega)y_3 & (x_{10}x_{22}+x_{11}x_{20}\omega^2+x_{12}x_{21}\omega)y_3 \\
(x_{10}x_{21}+x_{11}x_{22}\omega+x_{12}x_{20}\omega^2)y_2 & (x_{10}x_{22}+x_{11}x_{20}\omega+x_{12}x_{21}\omega^2)y_2 \end{array} \]
\else 
The dimension of $H^0(X,-K_X)^G$, where $G=<g>$, is $10$.
\fi 
\noindent It can be shown that the base locus for $|H^0(X,-K_X)^G|$ has only $9$ points and that these are
$$((1:\omega^i:\omega^{2i}),(1:\omega^{2i}:\omega^{i}),(0:0:-\omega^j:1))$$
with $0\leq i,j\leq 2$. By direct inspection, the generic invariant section $s$ is smooth at these points and does not intersect the fixed locus, so, by Bertini's Theorem, there exists a Calabi-Yau $Y$ embedded in $dP_6\times dP_3$ and a group $G\simeq \Z_3$ acting freely on $Y$. The Hodge diamond for $Y/G$ is
$$\xymatrix@R=8pt@C=6pt{
   &   &   & 1   \\
   &   & 0 &   & 0 \\
   & 0 &   & 5 &   & 0 \\
1  &   & 11 &   & 11 &   & 1 \\
   & 0 &   & 5 &   & 0 \\
   &   & 0 &   & 0 \\
   &   &   & 1
}$$
and it's height is $16$, that is the minimum for the height.


\section{On the Relation between $\Aut(S_1)\times \Aut(S_2)$ and $\Aut(S_1\times S_2)$}\label{relation}

Let $X$ be a projective complex manifold. We will denote by $\NE(X)$ the cone of effective curves of $X$. An extremal subcone $V$ of $\NE(X)$ is a closed convex cone such that for every $v,w\in \NE(X)$ if $v+w\in V$ then $v,w\in V$. An extremal ray is an extremal subcone of dimension $1$. For every $D$ divisor on $X$ a subcone $V\subset \NE(X)$ is said to be $D-$negative if for every $v\in V$ one has $v\cdot D <0$. The Contraction Theorem says that for every extremal $K_X$-negative subcone $V$ of $\NE(X)$ the contraction $c_V$ of $V$ is well defined, that is to say, a morphism $c_V: X\rightarrow W$ with connected fibers such that  $W$ is a normal variety. Moreover, a curve in $X$ is contracted if and only if is numerically equivalent to a curve in $V$ and the Picard number $\rho(W)$ is equal to $\rho(X)-\Dim(<V>)$. For a morphism $f$ we recall that $\NE(f)$ is given by the intersection $\NE(X) \cap \ker(f_*)$, where $f_*$ is the map induced by $f$ on the vector space spanned by $\NE(X)$.
\vspace{4mm}

\noindent If $\phi\in\Aut(S_1\times S_2)$ we will write $\phi(x,y)=(\phi_1(x,y),\phi_2(x,y))$ where $\phi_i=\pi_i\circ \phi$ where $\pi_i$ is the projection of $S_1\times S_2$ on $S_i$.

\begin{lem}
\label{LEM:Split}
Let $S_1$ and $S_2$ be two del Pezzo surfaces and let $\phi\in \Aut(S_1\times S_2)$ Let $\pi_i$ be the projection from $S_1 \times S_2$ onto the $i$-th factor $S_i$ for $i=1,2$.
If $\phi_*(\NE(\pi_i))=\NE(\pi_i)$, then $\phi(x,y)=(\phi_1(x),\phi_2(y))$ where $\phi_i\in\Aut(S_i)$.
If $\phi_*$ switches the cones $\NE(\pi_1)$ and $\NE(\pi_2)$, then $S_1=S_2$ and $\phi(x,y)=(\phi_1(y),\phi_2(x))$ with $\phi_1\in\Biol(S_2,S_1)$ and $\phi_2\in\Biol(S_1,S_2)$.
\end{lem}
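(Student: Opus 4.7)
The plan is to realise each projection $\pi_i : S_1\times S_2 \to S_i$ as the extremal contraction associated with the face $\NE(\pi_i) \subset \NE(S_1\times S_2)$, and then to invoke its universal property: any morphism out of $S_1\times S_2$ which is numerically trivial on every curve of $\NE(\pi_i)$ factors uniquely through $\pi_i$. This is a standard consequence of the Contraction Theorem together with the fact that $\pi_i$ has connected fibres (each fibre is the connected variety $S_{3-i}$) and that the target $S_i$ is smooth. Note that $\NE(\pi_i)$ is $K$-negative, since $K_{S_1\times S_2}=\pi_1^*K_{S_1}+\pi_2^*K_{S_2}$ and $-K_{S_j}$ is ample on the del Pezzo surface $S_j$, so the Contraction Theorem does apply.

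For the splitting case, I would assume $\phi_*(\NE(\pi_i))=\NE(\pi_i)$ for $i=1,2$ and consider $\psi_i:=\pi_i\circ\phi:S_1\times S_2\to S_i$. For any irreducible curve $C$ with $[C]\in\NE(\pi_i)$, the class $\phi_*[C]$ again lies in $\NE(\pi_i)$ by hypothesis, so $\pi_i(\phi(C))$ has trivial numerical class in the projective variety $S_i$ and must therefore be a point. Hence $\psi_i$ contracts every curve of $\NE(\pi_i)$, and by the universal property above it factors uniquely as $\psi_i=\phi_i\circ\pi_i$ for some morphism $\phi_i:S_i\to S_i$. Combining $i=1$ and $i=2$ yields $\phi(x,y)=(\phi_1(x),\phi_2(y))$, and running the same argument with $\phi^{-1}$ produces two-sided inverses, so each $\phi_i$ is an automorphism.

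The swap case is parallel. Under the hypothesis that $\phi_*$ exchanges $\NE(\pi_1)$ and $\NE(\pi_2)$, the argument now shows that $\pi_1\circ\phi$ contracts $\NE(\pi_2)$ and hence descends to a morphism $\phi_1:S_2\to S_1$ satisfying $\pi_1\circ\phi=\phi_1\circ\pi_2$, and symmetrically $\pi_2\circ\phi=\phi_2\circ\pi_1$ with $\phi_2:S_1\to S_2$. This forces $\phi(x,y)=(\phi_1(y),\phi_2(x))$; applying the same descent to $\phi^{-1}$ yields inverses, so $\phi_1\in\Biol(S_2,S_1)$ and $\phi_2\in\Biol(S_1,S_2)$, and in particular $S_1\cong S_2$.

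The main technical hurdle is the descent step: one must verify that $\pi_i$ really is characterised by its contracted face, so that any morphism contracting that face necessarily factors through it. This is precisely where Mori theory enters — via the Contraction Theorem applied to the Fano fourfold $S_1\times S_2$, reinforced by the rigidity-lemma fact that a morphism with connected fibres to a normal target enjoys the required universal property. Once this factorisation is granted, the remainder of the argument is purely formal bookkeeping.
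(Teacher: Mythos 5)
Your proof is correct. You and the paper share the same key observation: since $\phi_*$ preserves (resp.\ switches) the faces $\NE(\pi_i)$, the image under $\phi$ of any irreducible curve contracted by $\pi_i$ has numerically trivial pushforward to the appropriate factor and is therefore again contracted, so $\pi_i\circ\phi$ is constant on the fibres of the relevant projection. Where you diverge is in the descent step. You package it as the universal property of the extremal contraction $\pi_i=c_{\NE(\pi_i)}$ furnished by the Contraction Theorem (plus rigidity), which immediately gives the factorization $\pi_i\circ\phi=\phi_i\circ\pi_j$. The paper instead does this by hand: given $x_1,x_2\in S_1$ it chooses irreducible curves $C_1\ni x_1$ and $C_2\ni x_2$ in $S_1$ with $C_1\cap C_2\neq\emptyset$, observes that $\phi(C_i\times\{y\})=D_i\times\{y_i\}$, and deduces $y_1=y_2$ because images of intersecting curves intersect; hence $\pi_2\circ\phi$ depends only on $y$. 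Your route is shorter and leans on a standard black box; the paper's is self-contained and does not actually need the $K$-negativity of the face, since once constancy on fibres is known the induced map is automatically a morphism (the projections have sections, so one may simply set $\phi_2(y):=\pi_2(\phi(x_0,y))$). Both arguments finish identically: run the same reasoning on $\phi^{-1}$ to produce two-sided inverses, so each $\phi_i$ is an automorphism in the preserving case, while in the switching case $\phi_1$ and $\phi_2$ are mutually inverse biholomorphisms between $S_2$ and $S_1$, forcing $S_1\cong S_2$.
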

\begin{proof}
Assume $\phi_*(\NE(\pi_i))=\NE(\pi_i)$. Fix $x_1,x_2\in S_1$ and take two distinct irreducible curves $C_1$ and $C_2$ on $S_1$ whose intersection is non empty and such that $x_i\in C_i$. We have
$$\phi(C_i\times y)=D_i\times y_i$$ because the image of $C_i\times y$ is a curve that is numerically equivalent to a curve in $\NE(\pi_2)$. But $C_1\times y$ and $C_2\times y$ are two curves with nonempty intersection so their images have nonempty intersection. In particular $y_1=y_2$ and this implies that $\phi_2(x,y)=\phi_2(y)$. The same argument works with the first component ($\phi_1(x,y)=\phi_1(x)$) and with $\phi^{-1}$ meaning that $\phi_i$ is an automorphism of $S_i$.
\vspace{4mm}

\noindent With the same method, if $\phi_*$ switches the two cones, one has $$\phi(x,y)=(\phi_1(y),\phi_2(x))$$ and that $\phi_i$ are biholomorphism thus $S_1=S_2$.
\end{proof}

\begin{lem}
Let $S_1$ and $S_2$ be two del Pezzo surfaces such that $\rho(S_1),\rho(S_2)\geq 3$. If $\rho(S_1)\neq\rho(S_2)$ then $$\Aut(S_1\times S_2)=\Aut(S_1)\times\Aut(S_2).$$
The same holds if $\rho(S_1)=(S_2)$ and $S_1\neq S_2$. Instead, if $S_1=S_2$ one has
$$\Aut(S_1\times S_2)=(\Aut(S_1)\times \Aut(S_2))\ltimes \Z_2.$$
\end{lem}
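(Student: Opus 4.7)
My plan is to reduce the statement to Lemma \ref{LEM:Split} by showing that for every $\phi\in\Aut(X)$ with $X=S_1\times S_2$, the induced automorphism $\phi_*$ of $N_1(X)$ permutes the two extremal subcones $\NE(\pi_1)$ and $\NE(\pi_2)$, and that the non-trivial permutation forces $S_1\simeq S_2$. The case analysis in the statement then follows at once.

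First I would make the Mori cone explicit. Because $X$ is a product,
$$\NE(X)=\NE(S_1)\oplus\NE(S_2)\subset N_1(S_1)\oplus N_1(S_2)=N_1(X),$$
and faces of a direct sum of cones split as direct sums of faces; in particular every extremal subcone of $\NE(X)$ has the form $F=F_1\oplus F_2$ for faces $F_i$ of $\NE(S_i)$. Since $X$ is a Fano fourfold, every non-zero such face is $K_X$-negative and its contraction coincides with the product
$$c_F=c_{F_1}\times c_{F_2}\colon S_1\times S_2\longrightarrow Y_1\times Y_2,$$
where $c_{F_i}\colon S_i\to Y_i$ is the contraction of $F_i$. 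Hence $c_F$ is of fiber type (that is, $\dim(Y_1\times Y_2)<\dim X$) if and only if one of the $F_i$ equals the whole cone $\NE(S_i)$, and the minimal fiber-type extremal faces of $\NE(X)$ are precisely $\NE(\pi_1)=\{0\}\oplus\NE(S_2)$ and $\NE(\pi_2)=\NE(S_1)\oplus\{0\}$.

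Any $\phi\in\Aut(X)$ induces a linear automorphism $\phi_*$ of $N_1(X)$ preserving $\NE(X)$ and its face structure, and carrying the contraction of a face $F$ to the contraction of $\phi_*(F)$ (with the same target up to isomorphism, since $c_F\circ\phi^{-1}$ is a contraction of $\phi_*(F)$). In particular $\phi_*$ preserves the property of being of fiber type and the inclusion relation between faces, so it permutes the two-element set $\{\NE(\pi_1),\NE(\pi_2)\}$ of minimal fiber-type faces. We are then in the setting of Lemma \ref{LEM:Split}: if $\phi_*$ fixes each cone, the lemma gives $\phi\in\Aut(S_1)\times\Aut(S_2)$; if $\phi_*$ swaps them, the second part of the lemma produces biholomorphisms between $S_1$ and $S_2$ and forces $S_1\simeq S_2$.

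If $\rho(S_1)\neq\rho(S_2)$ the cones $\NE(\pi_1)$ and $\NE(\pi_2)$ have different dimensions, so $\phi_*$ cannot swap them; if $\rho(S_1)=\rho(S_2)$ but $S_1\not\simeq S_2$ the swap is again impossible, as it would force $S_1\simeq S_2$. In both cases $\Aut(X)=\Aut(S_1)\times\Aut(S_2)$. When $S_1=S_2=S$, the involution $\tau(x,y)=(y,x)$ is itself an automorphism realizing the swap, and every $\phi\in\Aut(X)$ is either already of product form or becomes so after composition with $\tau$, yielding $\Aut(X)=(\Aut(S)\times\Aut(S))\ltimes\Z_2$ with $\tau$ acting by exchanging the two factors. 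The main obstacle is the Mori-theoretic step identifying $\pi_1$ and $\pi_2$ as the minimal fiber-type extremal contractions of $X$; this is where both the Fano hypothesis (needed to invoke the Contraction Theorem unconditionally) and the product structure (allowing faces to split as $F_1\oplus F_2$) enter essentially.
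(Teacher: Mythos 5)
Your reduction to Lemma \ref{LEM:Split} hinges on the claim that the contraction $c_{F_1}\times c_{F_2}$ of a face $F_1\oplus F_2$ is of fiber type if and only if some $F_i$ equals the whole cone $\NE(S_i)$, so that $\NE(\pi_1)$ and $\NE(\pi_2)$ are the \emph{minimal} fiber-type faces and are therefore permuted by $\phi_*$. That claim is false for the surfaces at hand: a del Pezzo surface of Picard rank at least $2$ admits conic-bundle contractions onto $\P^1$, which are fiber-type contractions of \emph{proper} faces. Concretely, on $S=\Bl_{\left\lbrace P_1,P_2\right\rbrace}\P^2$ the nef class $H-E_1$ satisfies $(H-E_1)^2=0$ and supports the proper face of $\NE(S)$ spanned by $[E_2]$ and $[H-E_1-E_2]$; its contraction is the pencil of lines through $P_1$, a fibration $S\to\P^1$. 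Hence $\{0\}\oplus F_2$, with $F_2$ such a conic-bundle face of $\NE(S_2)$, is a fiber-type extremal face of $\NE(X)$ strictly contained in $\NE(\pi_1)$, so $\NE(\pi_1)$ and $\NE(\pi_2)$ are neither minimal nor maximal among proper fiber-type faces. Nearby repairs also fail without extra input: the direct sum of a conic-bundle face of $\NE(S_1)$ and one of $\NE(S_2)$ again contracts onto a surface ($\P^1\times\P^1$) with all fibers two-dimensional. One could plausibly single out $\NE(\pi_1),\NE(\pi_2)$ as the faces with equidimensional contraction onto a surface of Picard rank at least $3$ (using $\rho(S_i)\geq 3$), but that is a different argument which you would have to carry out; as written, the key step --- that $\phi_*$ permutes the two-element set $\{\NE(\pi_1),\NE(\pi_2)\}$ --- is not established.

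That step is essentially the entire content of the paper's proof, and the paper does not get it for free. It works at the level of extremal rays, which for $\rho(S_i)\geq 3$ are exactly the classes of $E_1\times P_2$ and $P_1\times E_2$ for $(-1)$-curves $E_i\subset S_i$; it assumes $\phi_*$ sends a ray of one type to a ray of the other type (or, in the equal-rank case, that the two types are mixed), constructs from the corresponding blow-downs a bijective holomorphic, hence biholomorphic, map $f$ between $\hat S_1\times S_2$ and $S_1\times\hat S_2$ (using connectedness and dimension bounds on the fibers of the contractions, plus Hartogs), and derives a contradiction by comparing Euler characteristics, e.g.\ $(\chi(S_1)-1)\chi(S_2)=\chi(S_1)(\chi(S_2)-1)$. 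Nothing in your proposal substitutes for this. Your concluding case analysis (dimension count when $\rho(S_1)\neq\rho(S_2)$, Lemma \ref{LEM:Split} forcing $S_1=S_2$ under a swap, the involution $\tau$ producing the $\Z_2$) is correct, but it only becomes available once the permutation of $\{\NE(\pi_1),\NE(\pi_2)\}$ has actually been proved.
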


\begin{proof}
Call $X$ the product $S_1\times S_2$. Then $X$ is a Fano fourfold and $$\NE(X)=\NE(X)\cap \NE(\pi_{1,*})+\NE(X)\cap \NE(\pi_{2,*}).$$ In particular, every extremal ray of $X$ is generated by a curve of the type $P_1\times E_2$ or $E_1\times P_2$, where $E_i$ is a $(-1)-$curve on $S_i$.
Observe that the image $V'$ of an extremal subcone $V$ by an automorphism $\phi$ is again an extremal subcone. In fact, if $v+w\in V'$ for some $v,w\in \NE(X)$ then $\phi_*^{-1}(v)$ and $\phi_*^{-1}(w)$ are effective curves such that $\phi_*^{-1}(v)+\phi_*^{-1}(w)=\phi_*^{-1}(v+w)\in V$.  But if  $V$ is extremal both $\phi_*^{-1}(v)$ and $\phi_*^{-1}(w)$ are in $V$. This implies that $v$ and $w$ are in $V'$, so $V'$ also is extremal.
This implies that $\phi$ induces a permutation of the extremal rays of $X$.
\vspace{4mm}

\noindent Suppose that there exists an extremal curve $E\times P_1$ such that $\phi_* (E_1\times P_2)=P_1\times E_2$. Then $\phi_*$ maps the extremal ray $V:=[E_1\times P_2]$ to the extremal ray $V':=[P_1\times E_2]$.  The contractions $c_V$ and $c_{V'}$ associated to the extremal subcones $V$ and $V'$ are respectively $p_1\times \id$ and $\id\times p_2$, where $p_i:S_i\rightarrow \hat{S}_i$ are the blow up with exceptional divisor $E_i$. Observe that $\hat{S}_i$ is smooth and that the fibers of $c_V$ and $c_{V'}$ have dimension $0$ or $1$ and are connected. By construction a curve $C$ is contracted by $c_{V}$ if and only $\phi_* C$ is contracted by $c_{V'}$. These two facts imply that the map $f:\hat{S}_1\times S_2 \rightarrow S_1\times \hat{S}_2$ such that $f(P)=(c_{V'}\circ \phi)(c_{V}^{-1}(P))$ is well defined.
$$\xymatrix{\ar @{} [dr] |{\circlearrowleft}
S_1\times S_2 \ar[d]_{c_{V}} \ar[r]^{\phi} & S_1\times S_2 \ar[d]^{c_{V'}} \\
\hat{S}_1\times S_2 \ar[r]_f & S_1\times \hat{S}_2 }$$
Let's see that the map $f$ is injective. Call $Q_i$ the point of $\hat{S}_i$ such that $p_i^{-1}(Q_i)=E_i$. If $f(Q_1\times R_1)=f(Q_1\times R_2)$ with $R_1\neq R_2$ then, to calculate the image of $Q_1\times R_i$ we obtain first two disjoint curves in $S_1\times S_2$ of the form $E_1\times R_i$. Then these two are sent to two disjoint curves of the form $T_i\times E_2$ by $\phi$ and, at last, contracted to the same point by $c_{V'}$. This implies that the fiber of this point with respect to $c_{V'}$ contains two disjoint curves and, being connected, has to be at least of dimension $2$. But we have seen that every fiber has dimension at most $1$, so necessarily $R_1=R_2$.
By construction $f$ is also surjective and so it is a bijective map.
\vspace{4mm}

\noindent The map $f$ is a morphism because it is everywhere well defined and it is holomorphic outside $Q_1\times S^2$ that has codimension $2$ in $\hat{S}_1\times S_2$. Hence, by Hartogs' Theorem, it is holomorphic on $\hat{S}_1\times S_2$. This is enought to conclude that $f$ is an isomorphism. This implies
$$\chi(\hat{S}_1\times S_2)=\chi(S_1\times \hat{S}_2);$$
but $\chi(\hat{S_i})=\chi(S_i)-1$ because $b_1(\hat{S}_i)=b_1(S_1)-1$ and hence, by the multiplicativity of $\chi$,  we have
$$(\chi(S_1)-1)\chi(S_2)=\chi(S_1)(\chi(S_2)-1)$$
and $\chi(S_1)=\chi(S_2)$. But this contraddicts the hypothesis $\rho(S_1)\neq\rho(S_2)$; hence the image of $E\times P_1$ by $\phi_*$ has to be of the same type. This implies that $\phi_*\NE(\pi_j)=\NE(\pi_j)$ and this is sufficient to conclude that $\phi$ can be written as a product of two automorphisms by
Lemma \ref{LEM:Split}.
\vspace{4mm}

\noindent Suppose, now, that $\rho(S_1)=\rho(S_2)\geq 3$. Fix a blow-up model for $S_i$. Then the $(-1)-$curves on $S_i$ are either $E_{ij}$, and are contracted to points by the model, or are sent to curves (lines, conics ($\rho(S_i)\geq 5$) and cubics ($\rho(S_i)\geq 7$)). If, for all $j$, the image of $E_{1j}\times P$ belongs to $[Q\times E]$ for some $(-1)-$curve $E$ that depends on $j$, then the same holds true for the other exceptional curves of the same type: $\phi(E_1\times P)\in [Q\times E]$ for some $E$ depending on $E_1$. Thus, saying that there exist two exceptional curves $E_i\times P$ such that $\phi(E_1\times P)\in [Q\times E]$ and $\phi(E_1\times P)\in [E'\times Q]$ is equivalent to requiring that there are two indices (for examples $j=1$ and $j=2$) such that
$$\phi(E_{11}\times P)\in [Q\times E_2]\mbox{ and }\phi(E_{12}\times P)\in [E_1\times Q].$$
Suppose, then, that this could happen. Then, as in the previous case, we can construct a commutative diagram
$$\xymatrix{\ar @{} [dr] |{\circlearrowleft}
S_1\times S_2 \ar[d]_{c_{V}} \ar[r]^{\phi} & S_1\times S_2 \ar[d]^{c_{V'}} \\
\tilde{S}_1\times S_2 \ar[r]_f & \hat{S}_1\times \hat{S}_2 }$$
where $c_{V}=r\times \id$ and $c_{V'}=p_1\times p_2$ where $r:S_1 \rightarrow \tilde{S}_1$ is the contraction of two $E_{11}=r^{-1}(R_1)$ and $E_{12}=r^{-1}(R_2)$ whereas $p_1$ and $p_2$ are the blow-up with exceptional divisor respectively $E_1$ and $E_2$.
Note that the cone $V$ spanned by $E_{11}\times P$ and $E_{12}\times P$ is an extremal subcone because for $a>>0$, $L:=\O((aH-E_{11}-E_{12})\times S_2)$ is a nef line bundle such that $V=\NE(S_1\times S_2)\cap L^{\perp}$. This implies that its image $V'$ is extremal. Again, the construction of $f$ make sense because $c_{V'}$ contracts a curve if and only if $c_{V}$ contracts its preimage and because all the fibers of $c_{V}$ are connected and have at most dimension one.
\vspace{4mm}

\noindent Assume $f(R_1\times Q_1)=f(R_1\times Q_2)$. The fibers $E_{11}\times Q_i$ are mapped to two disjoint curves of the form $\tilde{Q_i}\times E_2$ and then contracted to the same point. Then the fiber $S$ of this point has dimension at least $2$ (exactly $2$ by construction) and contains $\tilde{Q_i}\times E_2$. Recall that $-K_{X_{|_S}}:=D'$ is ample so it intersects $\tilde{Q_i}\times E_2$. $D'$ is then an effective curve that is contracted to a point by $c_{V'}$ so its preimage $D$ intersects $E_{11}\times Q_i$ and is contracted by $c_{V}$. Hence $Q_1=Q_2$. In a similar way we dealt with the other cases and prove that $f$ is injective. By construction, $f$ is also surjective and hence bijective.
\vspace{4mm}

\noindent Again $f$ is a map that is holomorphic outisde two disjoint smooth subvariety of $\tilde{S}_1\times S_2$ whose codimension is $2$. Thus, by Hartogs' Theorem, $f$ is everywhere holomorphic. Then $f$ is an isomorphism but checking the equality of the Euler numbers one obtain $$2+\rho(S_2)=2+\rho(S_1)=\chi(S_1)=\chi(S_2)+1=3+\rho(S_2)$$
and then again a contradiction. Hence the two types of extremal rays cannot be mixed by $\phi$. There are two cases: the first corresponding to the case for which $\forall \phi\in \Aut(S),$ $\phi_*\NE(\pi_i)=\NE(\pi_i)$ and the second where there exists $\phi\in\Aut(X)$ that switches the two cones. By Lemma \ref{LEM:Split}, in the first case $\Aut(S_1\times S_2)=\Aut(S_1)\times \Aut(S_2)$ and $S_1\neq S_2$ whereas, in the second, we have $S_1=S_2$ and $\Aut(S_1\times S_2)=\Aut(S_1)^{\times 2}\ltimes \Z_2$.
\end{proof}

\begin{lem}
Let $S_1$ and $S_2$ be two del Pezzo surfaces with $\rho(S_1)\leq 2$ and $\rho(S_2)\geq 3$. Then $\Aut(S_1\times S_2)=\Aut(S_1)\times \Aut(S_2)$.
\end{lem}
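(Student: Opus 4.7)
The plan is to show, by an argument parallel to the one used when both Picard ranks are at least $3$, that no $\phi \in \Aut(S_1 \times S_2)$ can exchange the extremal subcones $\NE(\pi_1)$ and $\NE(\pi_2)$. Once this is established, $\phi_*(\NE(\pi_i)) = \NE(\pi_i)$ for each $i$, and Lemma \ref{LEM:Split} yields a decomposition $\phi = (\phi_1, \phi_2)$. Because $\rho(S_1) < \rho(S_2)$ forces $S_1 \neq S_2$, Lemma \ref{LEM:Split} then gives precisely $\Aut(S_1 \times S_2) = \Aut(S_1) \times \Aut(S_2)$. The three possibilities for $S_1$, namely $\P^2$, $\P^1 \times \P^1$ and $dP_8$, have to be handled separately.

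Suppose, for contradiction, that $\phi_*$ sends a $\pi_2$-type extremal ray $V$, generated by an extremal curve of $S_1$ times a point, to a $\pi_1$-type extremal ray $V' = [\{Q\} \times E']$ with $E'$ a $(-1)$-curve on $S_2$, and let $c_V : X \to Y$ and $c_{V'} : X \to Y'$ be the corresponding contractions. If $S_1 = \P^2$ then $V = \NE(\pi_2)$ is a single ray and $c_V$ is the projection onto $S_2$, whose fibers $\P^2 \times \{y\}$ are $2$-dimensional. Every curve in such a fiber is mapped by $\phi$ into a curve numerically equivalent to one in $V'$, hence contracted by $c_{V'}$; since $\P^2$ is rationally chain connected, $c_{V'} \circ \phi$ must be constant on every $\P^2 \times \{y\}$, forcing a $2$-dimensional image to lie inside a fiber of $c_{V'}$ of dimension at most $1$, which is impossible.

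For $S_1 = \P^1 \times \P^1$, or for $S_1 = dP_8$ with $V$ generated by the strict transform of a line through the blown-up point, the fibers of $c_V$ are $1$-dimensional and the construction of the preceding lemma produces a bijective morphism $f : Y \to Y'$, extended to a biholomorphism by Hartogs' Theorem; but $\dim Y = 3$ while $\dim Y' = 4$, a contradiction. The remaining and most delicate case is $S_1 = dP_8$ with $V = [E_1 \times P]$, where $E_1$ is the $(-1)$-curve of $dP_8$: both contractions give $4$-folds ($\P^2 \times S_2$ and $dP_8 \times \hat{S}_2$) with generically finite fibers, so the dimensional argument collapses. Instead I would invoke the Euler-characteristic identity forced by the biholomorphism $f$, namely
$$3\chi(S_2) = \chi(\P^2)\chi(S_2) = \chi(dP_8)\chi(\hat{S}_2) = 4(\chi(S_2) - 1),$$
which gives $\chi(S_2) = 4$; but every del Pezzo surface satisfies $\chi(S) = \rho(S) + 2$, so $\rho(S_2) \geq 3$ implies $\chi(S_2) \geq 5$, a contradiction. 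This last Euler-characteristic step is the main obstacle, as it is the only case where a purely dimensional argument does not suffice. Having ruled out all configurations, $\phi_*$ preserves both cones and Lemma \ref{LEM:Split} completes the proof.
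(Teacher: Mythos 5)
Your argument is correct and reaches the same conclusion, but it routes two of the three cases differently from the paper. The overall skeleton is shared: rule out any $\phi_*$ exchanging an extremal ray of $\NE(\pi_2)$ with one of $\NE(\pi_1)$, then invoke Lemma \ref{LEM:Split}; and your treatment of the delicate $dP_8$ sub-case $V=[E_1\times P]$ (the diagram $\P^2\times S_2 \to dP_8\times\hat S_2$ and the identity $3\chi(S_2)=4(\chi(S_2)-1)$ against $\chi(S_2)=\rho(S_2)+2\geq 5$) is exactly the paper's. Where you diverge: for $S_1=\P^2$ the paper does not use the cone machinery at all — it analyzes the component maps directly, showing $\beta_s:\P^2\to S_2$ is constant (no non-constant map $\P^2\to C$, and a dominant map would pull back a $(-1)$-curve to a divisor of negative self-intersection on $\P^2$), then that $s\mapsto\alpha_s\in PGL(3)$ is constant by lifting to an affine map into $SL(3)$; your version instead observes that $c_V=\pi_2$ has $2$-dimensional fibers which cannot be crushed into the $\leq 1$-dimensional fibers of the blow-down $c_{V'}$, which is a valid and more uniform argument. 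For $S_1=\P^1\times\P^1$ and for the ruling ray $[(H-E)\times P]$ of $dP_8$, the paper uses the $-K_X$-degrees of the extremal curves ($2$ versus $1$), a one-line obstruction, whereas you compare dimensions of the contraction targets ($3$ versus $4$); your route works, and in fact you do not even need the bijectivity-plus-Hartogs step you mention — the induced $f$ is automatically a surjection from a threefold onto a fourfold, which is already absurd. The net effect is that your proof is more uniform across the three cases, at the cost of invoking the Contraction Theorem where the paper gets away with either an elementary morphism analysis or a degree count.
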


\begin{proof}
There are three cases: $\rho(S_1)=1$ with $S_1=\P^2$ and $\rho(S_1)=2$ with $S_1=\P^1\times \P^1$ or $S_1=dP_8$.
\vspace{4mm}

\noindent If $S_1=\P^2$ and $\phi\in \Aut(X)$, fix a point $s\in S$ and consider the map obtained as composition of the inclusion $\P^2\simeq\P^2\times \left\lbrace s\right\rbrace\subset \P^2\times S_2$, $\phi$ and the projection on $S$. The resulting map $\beta_s$ cannot be a dominant morphism because, in this case, $\P^2$ would have divisors with negative self-intersection\footnote{The pullback $D$ of a $(-1)-$line $E$ for example.}. Moreover its image cannot have dimension greater than $0$; in fact, every surjective map $\P^2\rightarrow C$ induces a surjective map $\P^2\rightarrow \P^1$ but this cannot exist. Hence $\beta_s(\P^2)$ is a point, or equivalently, doesn't depend on $P$. Hence $$\phi(P,s)=(\alpha(P,s),\beta(s))$$
and the same holds true for $\phi^{-1}$ so $\beta\in\Aut(S_2)$ and, by a composition with $\id\times \beta^{-1}$, we can restrict to the case $\beta=\id$. Consider now for a fixed $s\in S_2$ the morphism $\alpha_s:\P^2\rightarrow \P^2$. As before, its image cannot have dimension $1$. If $\Dim(\alpha_s(\P^2))=0$ then $\phi(\P^2\times \left\lbrace s\right\rbrace)\subset Pt\times S_2$, and because $\phi$ is an automorphism, we would obtain an isomorphism between $\P^2$ and a del Pezzo surface of Picard number strictly greater than $1$ which is impossible. Hence $\alpha_s$ is a dominant map. Suppose $\alpha_s(P)=\alpha_s(Q)$. Then $$\phi(P,s)=(\alpha_s(P),s)=(\alpha_s(Q),s)=\phi(Q,s)$$
but $\phi$ is injective so $P=Q$ and $\alpha_s$ is also injective. This shows that $\alpha_s$ is an automorphism for every $s$ and in particular we have a map $f: s\in S_2\mapsto \alpha_s\in PGL(3)=SL(3)/ \Z_3$. Then $f$ lifts to a map from $S_2$ to $SL(3)$ that is affine and then $f$ doesn't depend on $s$. So $\Aut(\P^2\times S)=\Aut(\P^2)\times \Aut(S_2)$.
\vspace{4mm}

\noindent If $S_1=\P^1\times \P^1$ then the extremal rays of $X=S_1\times S_2$ are of the form $[(P_1\times P_2) \times E],[(P_1\times \P^1) \times Q]$ or $[(\P^1\times  P_2) \times Q]$ where $E$ is a $(-1)-$curve on $S_2$. In particular $((P_1\times P_2) \times E)\cdot (K_X)= -1$ whereas
$$((P_1\times \P^1) \times Q)\cdot K_X=((\P^1\times  P_2) \times Q)\cdot K_X=-2.$$
In particular, because extremal rays are permuted by every automorphism and because the intersection numbers are preserved, we have $\phi_*(\NE(\pi_i))=\NE(\pi_i)$ and then $\Aut(S_1\times S_2)=\Aut(S_1)\times \Aut(S_2)$.
\vspace{4mm}

\noindent If $S_1=dP_8$ and $\rho(S_2)\geq 3$ then the extremal rays are of the form $[E \times P_2],[(H-E) \times P_2]$ and $[P_1 \times E_2]$ where $E$ is the only $(-1)-$curve on $S_1$ and $E_2$ is a $(-1)-$curve on $S_2$. In particular $-K_X\cdot ((H-E) \times P_2)=2$ whereas for all the other extremal curves the intersection with $-K_X$ is $1$; hence $\phi_*$ fixes this extremal ray. Assume that $\phi_*([E\times P_2])=([P_1\times E_i])$. Then, denoting $V=\R^+[E\times P_2]$ and $V'=\R^+[P_1\times E_i]$, we obtain the following commutative diagram
$$\xymatrix{\ar @{} [dr] |{\circlearrowleft}
dP_8\times S_2 \ar[d]_{c_{V}} \ar[r]^{\phi} & dP_8\times S_2 \ar[d]^{c_{V'}} \\
\P^2\times S_2 \ar[r]_f & dP_8\times \hat{S}_2 }$$
where $f$ is again an isomorphism. This gives $\chi(S_2)=4$ but $\rho(S_2)\geq 3$ so we have a contradiction ($4=\chi(S_2)\geq 5$). Thus $\NE(S_i)=\phi_*(\NE(S_i))$ and then $\Aut(S_1\times S_2)=\Aut(S_1)\times \Aut(S_2)$.
\end{proof}

\begin{lem}
Let $S_1$ and $S_2$ be two del Pezzo surfaces such that $\rho(S_1),\rho(S_2)\leq 3$. Then:
\begin{itemize}
\item If $S_1\neq S_2$, $\Aut(S_1\times S_2)=\Aut(S_1)\times\Aut(S_2)$;
\item If $S_1=S_2\neq \P^1\times \P^1$, $\Aut(S_1\times S_2)=(\Aut(S_1)\times\Aut(S_2))\ltimes \Z_2$;
\item If $S_1=S_2= \P^1\times \P^1$, $\Aut(S_1\times S_2)=(\Aut(\P^1)^{\times 4})\ltimes S_4.$
\end{itemize}
\end{lem}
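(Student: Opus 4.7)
The approach mirrors that of the previous lemmas, refining the Mori-cone analysis to accommodate the smaller Picard numbers. The relevant surfaces are $\P^2$, $\P^1 \times \P^1$, $dP_8$, and $dP_7$, so the total Picard number $\rho(X) \leq 6$ allows for case-by-case verification. Given $\phi \in \Aut(X)$ with $X = S_1 \times S_2$, the induced map $\phi_*$ on $N_1(X)$ preserves $\NE(X) = \NE(S_1) \oplus \NE(S_2)$ and permutes its extremal rays, respecting intersection with $K_X$.

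The key observation is the following characterization of the faces $\NE(\pi_j) = \NE(X) \cap \ker(\pi_{j,*})$. Its contraction is $\pi_{3-j}: X \to S_{3-j}$, whose fibers have dimension $2$. Any other face of $\NE(X)$ generated by extremal rays from both factors contracts to a variety of dimension at least $3$, so its fibers have dimension at most $1$. As in the diagram argument of the previous lemmas (using that $\phi$ induces isomorphisms between the targets of corresponding contractions), $\phi_*$ must respect this dimension-of-fiber stratification. In particular, $\phi_*$ permutes the set $\{\NE(\pi_1), \NE(\pi_2)\}$. When $\rho(S_j) = 1$, so $S_j = \P^2$, the face $\NE(\pi_{3-j})$ reduces to a single extremal ray, and the same fiber-dimension argument identifies it uniquely among the extremal rays of $\NE(X)$.

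If $\phi_*$ preserves both $\NE(\pi_j)$, then by Lemma \ref{LEM:Split} the automorphism $\phi$ splits as a product of automorphisms of the two factors. If $\phi_*$ swaps them, then Lemma \ref{LEM:Split} yields $\phi(x,y) = (\phi_1(y), \phi_2(x))$ with $\phi_i \in \Biol(S_{3-i}, S_i)$, forcing $S_1 \cong S_2$. Combined with the fact that the swap $\tau: (x,y) \mapsto (y,x)$ is an automorphism when $S_1 = S_2$, this establishes the first two bullets: $\Aut(S_1\times S_2) = \Aut(S_1) \times \Aut(S_2)$ when $S_1 \ne S_2$, and $(\Aut(S_1) \times \Aut(S_2))\ltimes \Z_2$ when $S_1 = S_2 \ne \P^1 \times \P^1$.

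The case $S_1 = S_2 = \P^1 \times \P^1$ demands a different argument, because $X = (\P^1)^4$ has four extremal rays (all with $K_X$-degree $-2$) rather than two pairs, and the symmetric group $S_4$ acts by permuting the $\P^1$ factors, yielding strictly more automorphisms than $(\Aut(\P^1\times\P^1))^2 \rtimes \Z_2$ would produce. The plan is to consider the homomorphism $\Aut(X) \to S_4$ defined by the action on extremal rays; this is surjective by the factor permutations. For the kernel, if $\phi_*$ fixes each extremal ray, then (using that a rational curve numerically equivalent to a fiber of the projection $\pi_i: X \to \P^1$ is itself such a fiber) the morphism $\pi_i$ is $\phi$-equivariant for each $i$, and an iterated application of the idea of Lemma \ref{LEM:Split} yields $\phi = \phi_1 \times \phi_2 \times \phi_3 \times \phi_4$ with $\phi_i \in \Aut(\P^1)$. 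The main obstacle will be the clean execution of this iterated splitting, since the ``extra'' cross-factor permutations in $S_4$ (beyond the natural $(\Z_2)^2 \rtimes \Z_2$ visible from the $(\P^1 \times \P^1)^{\times 2}$ viewpoint) reflect the fact that $\P^1 \times \P^1$ is itself a nontrivial product and must be accounted for explicitly.
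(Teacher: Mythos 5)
Your proposal is correct in outline, but it takes a genuinely different route from the paper. For this lemma the paper abandons the Mori-cone analysis altogether: since the surfaces in question are toric, it invokes Cox's exact sequence $1\to\Aut^0(X)\to\Aut(X)\to\Aut(N,\Delta)/\prod S_{\Delta_i}\to 1$ for the product, lists the rays of the relevant fans, computes the finite quotient case by case, checks that the extension splits by exhibiting explicit sections, and identifies $\Aut^0(X)$ with $\Aut^0(S_1)\times\Aut^0(S_2)$ via the graded automorphisms of the homogeneous coordinate ring; the extra $S_4$ for $(\P^1)^{\times 4}$ then falls out as the symmetry group of the fan. You instead extend the extremal-ray and contraction arguments of the preceding lemmas to the low Picard-number range, which is coherent and makes the section more uniform; your treatment of $(\P^1)^{\times 4}$ (surjection onto $S_4$ by permuting factors, kernel split into $\Aut(\P^1)^{\times 4}$ by applying the idea of Lemma \ref{LEM:Split} to each of the four projections) is sound. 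One auxiliary claim is not literally true as stated: a face mixing rays from the two factors can contract onto a variety of dimension $2$ rather than $\geq 3$ --- for instance, in $(\P^1\times\P^1)\times dP_8$ the face spanned by a ruling class of the first factor and the class $H-E$ of the second contracts onto $\P^1\times\P^1$ --- so fiber dimension alone does not always separate $\NE(\pi_j)$ from the mixed faces. The gap is easily closed by the same device the paper uses in its earlier lemmas: the isomorphism induced between the targets of the two contractions would force $dP_8\cong\P^1\times\P^1$, or an equality of Euler characteristics that fails. In exchange for the case analysis, your route avoids Cox's theorem entirely; the paper's route yields explicit generators and handles all toric products uniformly.
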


\begin{proof}
If $\rho(S_i)\leq 3$, $S_i$ is a smooth toric variety. For a complete simplicial toric variety the sequence
$$1\rightarrow \Aut^0(X)\rightarrow \Aut(X)\rightarrow \frac{\Aut(N,\Delta)}{\Pi S_{\Delta_i}}\rightarrow 1$$
is exact by a result of Cox (see \cite{Cox}). We will see that this extension is a split extension in all our cases and hence $\Aut(X)$ can be seen as a semidirect product of $\Aut^0(X)$ and $\frac{\Aut(N,\Delta)}{\Pi S_{\Delta_i}}$. The proof will be completed analysing the structure of these two groups.
\vspace{4mm}

\noindent We call $\Delta_{S_i}\subset \Z^2=:N_i$ the fan of $S_i$ and denote with $\Delta_{S_i}(1)=\left\lbrace e_0,\dots, e_{r_i}\right\rbrace$ the set of the rays of the fan.
The following table summarizes the rays of the fans we need.
\begin{center}
\begin{tabular}{c|cccccc}
$S$ & $e_1$ & $e_2$ & $e_3$ & $e_4$ & $e_5$ & $e_6$ \\ \hline
$\P^2$ & [1,0] & [0,1] & [-1,-1]\\
$\P^1\times \P^1$ & [1,0] & [0,1] & [-1,0] & [0,-1]\\
$dP_8$ & [1,0] & [0,1] & [-1,0] & [-1,-1]\\
$dP_7$ & [1,0] & [0,1] & [-1,0] & [0,-1] & [-1,-1]\\
$dP_6$ & [1,0] & [0,1] & [-1,0] & [0,-1] & [-1,-1] & [1,1]
\end{tabular}
\end{center}
If $\Delta\subset \Z^4=N$ is the fan of $X$, then $\Delta(1)=(\Delta_{S_1}\times \left\lbrace [0,0]\right\rbrace) \cup (\left\lbrace [0,0]\right\rbrace\times \Delta_{S_2})$. $\Aut(N,\Delta)$ will denote the group of the automorphisms of the lattice $N$ that fixes the fan $\Delta$. By direct computation, we show that
\begin{itemize}
\item If $S_1\neq S_2$, $\Aut(N,\Delta)=\Aut(N_1,\Delta_{S_1})\times \Aut(N_2,\Delta_{S_2})$;
\item If $S_1=S_2\neq \P^1\times \P^1$, $\Aut(N,\Delta)=(\Aut(N_1,\Delta_{S_1})\times \Aut(N_2,\Delta_{S_2}))\ltimes \Z_2$;
\item If $S_1=S_2=\P^1\times \P^1$, $\Aut(N,\Delta)=S_4 \ltimes \Z_2^4$.
\end{itemize}
It is possible to associate a divisor $D_i$ to each $e_i\in \Delta(1)$ and we say than $e_i\sim e_j$ iff $D_i$ and $D_j$ are linearly equivalent. Call $\left\lbrace\Delta_i\right\rbrace$ the partition of $\Delta(1)$ obtained by taking the quotient with respect to $\sim$. Call $S_{\Delta_i}$ the pemutation group over $\Delta_i$. It is easy to see that this partition doesn't mix rays coming from different factors of the product so we can write $S^1_{\Delta_i}$ or $S^2_{\Delta_i}$ to mean a permutation group that acts on the first or on the second factor. Call $H$ the quotient of $\Aut(N,\Delta)$ with respect to $\Pi S_{\Delta_i}=\Pi S^1_{\Delta_i} \times \Pi S^2_{\Delta_i}$. Then
\begin{itemize}
\item If $S_1\neq S_2$, $H=\frac{\Aut(N_1,\Delta_{S_1})}{\Pi S^1_{\Delta_i}}\times \frac{\Aut(N_2,\Delta_{S_2})}{\Pi S^2_{\Delta_i}}$;
\item If $S_1=S_2\neq \P^1\times \P^1$, $H=\left(\frac{\Aut(N_1,\Delta_{S_1})}{\Pi S^1_{\Delta_i}}\times \frac{\Aut(N_2,\Delta_{S_2})}{\Pi S^2_{\Delta_i}}\right)\ltimes \Z_2$;
\item If $S_1=S_2=\P^1\times \P^1$, $H=\frac{S_4 \ltimes Z_2^4}{\Z_2^4}\simeq S_4$.
\end{itemize}
Here a small summary of these groups.
\begin{center}
\begin{tabular}{c|ccc}
S &  $\Aut(N_{S},\Delta_{S})$  &  $\prod S_{\Delta_i}$ & $\Aut(N_{S},\Delta_{S})/\prod S_{\Delta_i}$\\ \hline
$\P^2$ & $\Sym(e_1,e_2,e_3)$	& $\Sym(e_1,e_2,e_3)$ & $\id$ \\
$\P^1\times \P^1$ 	& $<(13),(1234)>$	& $<(13),(24)>$ & $\Z_2$ \\
$dP_8$ & $<(24)>$ & $<(24)>$ & $\id$  \\
$dP_7$ & $<(12)(34)>$ & $\id$ & $\Z_2$ \\
$dP_6$ &  $\Sym(e_1,e_2,e_5)\times <-\id>$ & $\id$ & $S_3\times \Z_2$
\end{tabular}
\end{center}
To see that the sequence splits, consider, for example, the case $X=dP_8\times dP_7$ for which $H=\id\times \Z_2=<\sigma>$. This group is generated by the automorphism of the fan of $dP_7$ that switches the rays associated to the two exceptional divisors of $dP_7$, thus a section of $\Aut(X)\rightarrow H$ is given by
$\sigma \mapsto A$ where $A$ is an automorphism of $\P^2$ that switches the two points that are blown-up to obtain $dP_7$. All the other cases can be described in a similar way.
\vspace{4mm}

\noindent $\Aut^0(X)$ is the connected component of the identity in $\Aut(X)$ and now we will show that  $\Aut^0(X)=\Aut^0(S_1)\times \Aut^0(S_2)$. By a result of Cox (see again \cite{Cox}) $$\Aut^0(X)\simeq \frac{\Aut_g(S)}{Hom_{\Z}(\Pic(X),\C^*)}$$ where $\Aut_g(S)$ is the group of the automorphisms of the  homogeneous coordinate ring $S$ of $X$, regarded as graded $\C-$algebra. This group is spanned by $(\C^*)^{|\Delta(1)|}= (\C^*)^{|\Delta_{S_1}(1)|+|\Delta_{S_2}(1)|}$ and by the elements $y_m(\lambda)$ where $\lambda\in \C$ and $m\in R(N,\Delta)$ (the elements of $R(N,\Delta)$ are the roots of $\Aut(X)$). We show that each $y_m(\lambda)$ can be written in a unique way as the product of $f_i\in \Aut_g(R_i)$ where $R_i$ is the coordinate ring of $S_i$. This shows that $\Aut_g(S)\simeq \Aut_g(R_1)\times\Aut_g(R_2)$. The group $Hom_{\Z}(\Pic(X),\C^*)$ splits as $Hom_{\Z}(\Pic(S_1),\C^*)\times Hom_{\Z}(\Pic(S_2),\C^*)$ because $\Pic(X)=\Pic(S_1)\oplus\Pic(S_2)$. Then, the quotient can be viewed as a product of the quotient giving
$$\Aut^0(X)=\Aut^0(S_1)\times \Aut^0(S_2).$$
The claim follows from the combination of the facts above. For example, consider again the case $X=dP_8\times dP_7$. Since $\Aut(dP_8)$ is connected, we have $\Aut^0(X)=\Aut(dP_8)\times K$, where $$K\simeq \left\langle\begin{bmatrix}1 & 0 & * \\ 0 & * & * \\ 0 & 0 & *
\end{bmatrix}\right\rangle.$$ Since $H=\id\times \Z_2$,  we obtain $$\Aut(X)\simeq (\Aut(dP_8)\times K)\ltimes (\id\times \Z_2)=$$
$$\Aut(dP_8)\times (K\ltimes \Z_2)=\Aut(dP_8)\times \Aut(dP_7).$$
\end{proof}

\vspace{4mm}
\noindent Combining all these results, we obtain
\begin{thm}
\label{THM:AUTdPs}
Let $S_1$ and $S_2$ be two del Pezzo surfaces. Then
\begin{itemize}
\item If $S_1\neq S_2$, $\Aut(S_1\times S_2)=\Aut(S_1)\times\Aut(S_2)$;
\item If $S_1=S_2\neq \P^1\times \P^1$, $\Aut(S^{\times 2})=\Aut(S)^{\times 2}\ltimes \Z_2$;
\item If $S_1=S_2= \P^1\times \P^1$, $\Aut((\P^1)^{\times 4})=\Aut(\P^1)^{\times 4}\ltimes S_4.$
\end{itemize}
\end{thm}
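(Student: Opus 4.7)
The plan is to assemble the theorem directly from the four lemmas proved earlier in this section, organized according to the Picard numbers $(\rho(S_1),\rho(S_2))$. The unifying structural input is Lemma \ref{LEM:Split}: for any $\phi\in\Aut(S_1\times S_2)$, if $\phi_*$ preserves each of the subcones $\NE(\pi_i)$ of $\NE(X)$, then $\phi$ splits as a product $(\phi_1(x),\phi_2(y))$ with $\phi_i\in\Aut(S_i)$, while if $\phi_*$ exchanges the two cones then necessarily $S_1=S_2$ and $\phi$ has the swap form. Thus in every case it is enough to describe how $\phi_*$ acts on the two components of the Mori cone, and to decide whether a swap is actually realized by a geometric automorphism.

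First I would dispose of the case $\rho(S_1),\rho(S_2)\ge 3$ by invoking the lemma that analyses the extremal rays of $\NE(X)$, which are precisely of the form $[P\times E]$ or $[E\times P]$ for $(-1)$-curves $E$. The essential obstruction is the contraction diagram built around an alleged $\phi_*$ mixing rays of different factors; when it can be closed into a biholomorphism $f\colon \widehat S_1\times S_2\to S_1\times\widehat S_2$, multiplicativity of $\chi$ forces $\chi(S_1)=\chi(S_2)$, giving a contradiction if $\rho(S_1)\ne\rho(S_2)$ and, in the equal-rank case, ruling out mixing of the two families of extremal rays; the swap remains possible only when $S_1=S_2$, producing the $\ltimes\Z_2$ factor. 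The case $\rho(S_1)\le 2$, $\rho(S_2)\ge 3$ is handled by the corresponding lemma, treating $\P^2$, $\P^1\times\P^1$ and $dP_8$ separately: for $\P^2$ one uses that there is no nonconstant morphism $\P^2\to C$ with $C$ a curve to force the splitting of $\phi$; for the other two, the intersection numbers of the extremal rays with $-K_X$ distinguish the two families, and the contraction-diagram trick again excludes swaps (since such a swap would force an equality of Euler numbers incompatible with $\rho(S_2)\ge 3$).

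Next I would treat the remaining low-rank case $\rho(S_1),\rho(S_2)\le 3$, where the $S_i$ are toric. Here one invokes the Cox/Demazure exact sequence
\[
1\to \Aut^0(X)\to \Aut(X)\to \Aut(N,\Delta)/\textstyle\prod_i S_{\Delta_i}\to 1,
\]
decomposes $\Aut^0(X)$ as $\Aut^0(S_1)\times\Aut^0(S_2)$ via the splittings of the homogeneous coordinate ring and of $\Pic(X)=\Pic(S_1)\oplus\Pic(S_2)$, and compares the combinatorial quotient on the right with the product of the individual quotients. A direct, finite computation with the fans listed in the table shows that this quotient is the product (or the product extended by the $\Z_2$ swap when $S_1=S_2$, respectively by $S_4$ when $S_1=S_2=\P^1\times\P^1$), and that the extension splits because each non-trivial class in $\Aut(N,\Delta)/\prod S_{\Delta_i}$ can be lifted to a geometric automorphism induced by a permutation of the blown-up points.

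The main obstacle, and the step deserving the most care, is the toric case: one must verify case-by-case that the Cox extension genuinely splits and that the extra $S_4$ appearing for $(\P^1)^{\times 4}$ is produced geometrically (not merely combinatorially). Once all three regimes are settled, the trichotomy in the statement follows by simply combining the outputs: the product decomposition whenever $S_1\ne S_2$, the semidirect product with $\Z_2$ whenever $S_1=S_2\ne \P^1\times\P^1$, and the semidirect product with $S_4$ in the exceptional case $X=(\P^1)^{\times 4}$.
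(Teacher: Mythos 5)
Your proposal is correct and follows essentially the same route as the paper: the theorem is obtained there simply by combining Lemma \ref{LEM:Split} with the three Picard-number case lemmas (the contraction/Euler-characteristic argument for $\rho\geq 3$, the mixed case $\rho(S_1)\leq 2$, $\rho(S_2)\geq 3$, and the toric Cox-sequence analysis for $\rho\leq 3$), exactly as you describe. Your case division covers all pairs of del Pezzo surfaces and your identification of the splitting of the Cox extension as the delicate point matches the paper's emphasis.
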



\section{List of the Threefolds Obtained}\label{list}

In the previous sections we constructed examples of quotients of Calabi-Yau threefolds $Y$ embedded in $S_1\times S_2$ by groups that are of maximal order in the sense that a group $H\leq \Aut(S_1\times S_2)$ such that the restriction to $Y$ gives a free action, cannot have greater order than the ones used. If $Y$ is a Calabi-Yau threefold and $G$ is a group acting freely on $Y$ the same holds true each $H\leq G$. Moreover $Y/H\rightarrow Y/G$ is an \'etale covering. In the following table we summarize all the quotients analyzed and all the \'etale coverings obtained by taking quotient with respect to subgroups. Also the known examples are shown.
The column $m(|G|)/M$ represents the ratio of the maximal order of the existing group action freely on $Y$ and the estimated ($M=M(S_1,S_2)$). In the column $\Pi_1(Y/G)$ the fundamental group of the quotient is written. When for two isomorphic subgroups $H_1$ and $H_2$ of $G$ we obtain $h^{11}(Y/H_1)=h^{11}(Y/H_2)$ and  $h^{12}(Y/H_1)=h^{12}(Y/H_2)$ we represent them in the table in one row indicating that multiple subgroups give the same result by their number between round brackets. For example, taking $S_1=S_2=\P^2$ and $G\simeq \Z_3\oplus \Z_3$ there are $4$ subgroups of order $3$ and each of them gives a manifold with Hodge numbers $(2,29)$. In the table this is summarized by writing $\Z_3 (4)$ in the column of $\Pi_1(Y/H)$. In the last column a $"Y"$ means that the height obtained for the quotient threefold is the least possible, a $"N"$ means the opposite and a $"?"$ means that we don't know if this is the case or not. The pairs $(S_1,S_2)$ for which $M(S_1,S_2)=1$ are omitted.
\vspace{4mm}

\begin{center}
\begin{tabular}{|c|c|c|c|c|c|c|c|c|c|} \hline
$S_1$&$S_2$&$\max(|G|)/M$&$|G|$&$\Pi_1(Y/H)$&$h^{11}$&$h^{12}$&$\h$ & $\min$? \\ \hline\hline
\multirow{3}{*}{$\P^2$} & \multirow{3}{*}{$\P^2$} & \multirow{3}{*}{$9/ 9$} & 9 & $\Z_3\oplus \Z_3$ & 2 & 11 & 13 & Y \\ \cline{4-9}
 & & & 3 & $\Z_3 (4)$ & 2 & 29 & 31 & N \\ \cline{4-9}
 & & & 1 & $\left\lbrace\id\right\rbrace$ & 2 & 83 & 85 & N \\ \hline

\multirow{2}{*}{$\P^2$} & \multirow{2}{*}{$dP_6$} & \multirow{2}{*}{$3/ 3$} & 3 & $\Z_3$ & 3 & 21 & 24 & Y \\ \cline{4-9}
 & & & 1 & $\left\lbrace\id\right\rbrace$ & 5 & 59 & 64 & N \\ \hline

\multirow{2}{*}{$\P^2$} & \multirow{2}{*}{$dP_3$} & \multirow{2}{*}{$3/ 3$} & 3 & $\Z_3$ & 4 & 13 & 17 & Y \\ \cline{4-9}
 & & & 1 & $\left\lbrace\id\right\rbrace$ & 8 & 35 & 43 & N \\ \hline

\multirow{7}{*}{$\P^1\times \P^1$} & \multirow{7}{*}{$\P^1\times \P^1$} & \multirow{7}{*}{$16/ 16$} & 16 & $\Z_8\oplus\Z_2$ & 1 & 5 & 6 & Y \\ \cline{4-9}
 & & & 8 & $\Z_4\oplus\Z_2$ & 2 & 10 & 12 & N \\ \cline{4-9}
 & & & 8 & $\Z_8 (2)$ & 1 & 9 & 10 & N \\ \cline{4-9}
 & & & 4 & $\Z_2\oplus\Z_2$ & 4 & 20 & 24 & N \\ \cline{4-9}
 & & & 4 & $\Z_4 (2)$ & 2 & 18 & 20 & N \\ \cline{4-9}
 & & & 2 & $\Z_2 (3)$ & 4 & 36 & 40 & N \\ \cline{4-9}
 & & & 1 & $\left\lbrace\id\right\rbrace$ & 4 & 68 & 72 & N \\ \hline

\multirow{2}{*}{$\P^1\times \P^1$} & \multirow{2}{*}{$dP_6$} & \multirow{2}{*}{$2/ 2$} & 2 & $\Z_2$ & 5 & 29 & 34 & Y \\ \cline{4-9}
 & & & 1 & $\left\lbrace\id\right\rbrace$ & 6 & 54 & 60 & N \\ \hline

\multirow{6}{*}{$dP_6$} & \multirow{6}{*}{$dP_6$} & \multirow{6}{*}{$12/ 12$} & 12 & $\Z_{12}$ & 1 & 4 & 5 & Y \\ \cline{4-9}
 & & & 6 & $\Z_6$ & 2 & 8 & 10 & N \\ \cline{4-9}
 & & & 4 & $\Z_4$ & 3 & 12 & 15 & N \\ \cline{4-9}
 & & & 3 & $\Z_3$ & 4 & 16 & 20 & N \\ \cline{4-9}
 & & & 2 & $\Z_2$ & 6 & 24 & 30 & N \\ \cline{4-9}
 & & & 1 & $\left\lbrace\id\right\rbrace$ & 8 & 44 & 52 & N \\ \hline

\multirow{6}{*}{$dP_6$} & \multirow{6}{*}{$dP_6$} & \multirow{6}{*}{$12/ 12$} & 12 & $\Dic_{3}$ & 1 & 4 & 5 & Y \\ \cline{4-9}
 & & & 6 & $\Z_6$ & 2 & 8 & 10 & N \\ \cline{4-9}
 & & & 4 & $\Z_4 (3)$ & 3 & 12 & 15 & N \\ \cline{4-9}
 & & & 3 & $\Z_3$ & 4 & 16 & 20 & N \\ \cline{4-9}
 & & & 2 & $\Z_2$ & 6 & 24 & 30 & N \\ \cline{4-9}
 & & & 1 & $\left\lbrace\id\right\rbrace$ & 8 & 44 & 52 & N \\ \hline

\multirow{2}{*}{$dP_6$} & \multirow{2}{*}{$dP_4$} & \multirow{2}{*}{$2/ 2$} & 2 & $\Z_2$ & 7 & 19 & 26 & ? \\ \cline{4-9}
 & & & 1 & $\left\lbrace\id\right\rbrace$ & 10 & 34 & 44 & N \\ \hline

\multirow{2}{*}{$dP_6$} & \multirow{2}{*}{$dP_3$} & \multirow{2}{*}{$3/ 9$} & 3 & $\Z_3$ & 5 & 11 & 16 & Y \\ \cline{4-9}
 & & & 1 & $\left\lbrace\id\right\rbrace$ & 11 & 29 & 40 & N \\ \hline

\multirow{2}{*}{$dP_5$} & \multirow{2}{*}{$dP_5$} & \multirow{2}{*}{$5/ 5$} & 5 & $\Z_5$ & 2 & 7 & 9 & Y \\ \cline{4-9}
 & & & 1 & $\left\lbrace\id\right\rbrace$ & 10 & 35 & 45 & N \\ \hline

\multirow{5}{*}{$dP_4$} & \multirow{5}{*}{$dP_4$} & \multirow{5}{*}{$8/ 8$} & 8 & $\Z_4\oplus\Z_2$ & 3 & 5 & 8 & ? \\ \cline{4-9}
 & & & 4 & $\Z_2\oplus\Z_2$ & 6 & 10 & 16 & N \\ \cline{4-9}
 & & & 4 & $\Z_4 (2)$ & 4 & 8 & 12 & N \\ \cline{4-9}
 & & & 2 & $\Z_2 (3)$ & 8 & 16 & 24 & N \\ \cline{4-9}
 & & & 1 & $\left\lbrace\id\right\rbrace$ & 12 & 28 & 40 & N \\ \hline

\multirow{2}{*}{$dP_3$} & \multirow{2}{*}{$dP_3$} & \multirow{2}{*}{$3/ 3$} & 3 & $\Z_3$ & 6 & 9 & 15 & Y \\ \cline{4-9}
 & & & 1 & $\left\lbrace\id\right\rbrace$ & 14 & 23 & 37 & N \\ \hline

\multirow{3}{*}{$\P^1\times \P^1$} & \multirow{3}{*}{$dP_4$} & \multirow{3}{*}{$4/ 4$} & 4 & $\Z_2\oplus\Z_2$ & 5 & 13 & 18 & ? \\ \cline{4-9}
 & & & 2 & $\Z_2 (3)$ & 6 & 22 & 28 & N \\ \cline{4-9}
 & & & 1 & $\left\lbrace\id\right\rbrace$ & 8 & 40 & 48 & N \\ \hline
\end{tabular}
\end{center}

\clearpage
\addcontentsline{toc}{section}{Bibliography}
\bibliography{calabiyau}

\end{document}